\documentclass[oneside,a4paper]{amsart} 
\usepackage{amssymb,verbatim,mathabx,MnSymbol}

\usepackage[shortlabels]{enumitem}
    \setenumerate[0]{label={\rm (\roman*)}, leftmargin=*}

\usepackage[T1]{fontenc}
\usepackage[english]{babel}

\usepackage[textsize=footnotesize,textwidth=20ex,colorinlistoftodos]{todonotes}

\usepackage{hyperref}
\hypersetup{
    colorlinks=true,
    linkcolor=blue,
    filecolor=magenta, 
    citecolor=red,     
    urlcolor=cyan,
}

\usepackage[capitalize]{cleveref}
    \crefname{enumi}{}{}
    \Crefname{enumi}{Item}{Items}
    \crefname{equation}{}{}
    \Crefname{equation}{Equation}{Equations}

\newtheorem{proposition}{Proposition}[section]
\newtheorem{lemma}[proposition]{Lemma}
\newtheorem{corollary}[proposition]{Corollary}
\newtheorem{theorem}[proposition]{Theorem}

\theoremstyle{definition}

\newtheorem{example}[proposition]{Example}

\newtheorem{remark}[proposition]{Remark}

\DeclareMathOperator{\Aut}{\mathrm{Aut}}

\begin{document}
\title{Graphs, Axial Algebras and their Automorphism Groups}
\author{Hans Cuypers} 

\begin{abstract}
We introduce a class of  algebras over a field $\mathbb{F}$ related to directed graphs in which all edges are labeled by nonzero elements of the field $\mathbb{F}$. If all labels are different from $1$, these algebras are axial algebras. 
We determine their fusion laws, prove them to be simple in almost all cases, and determine their automorphism group under some conditions on the degrees and girth of the graph.

A construction of a class of these graphs with prescribed automorphism group enables us to construct  
for each group $G$  infinitely many simple (axial) algebras (with a fixed fusion law) such that the automorphism group of the algebra is isomorphic to $G$.
\end{abstract}

\maketitle

\bigskip

\emph{To Arjeh, Jonathan and Sergey, on the occasion of their 75th and 65th birthday}


\bigskip

\section{Introduction}

Axial algebras form a  class of not necessarily associative algebras.
These algebras are defined as being generated by a class of \emph{idempotents}, called \emph{axes},
for which the left and right adjoint maps are semi-simple and obey so-called \emph{fusion laws}.
These laws describe how the product of two eigenvectors of the adjoint maps of an axis
decomposes into eigenvectors.

More precisely,  let $A$ be an algebra over a field $\mathbb{F}$, with $A$ possibly non-commutative or non-associative, and $x\in A$ an element such that
$x$ is an idempotent, i.e. $x^2=x$, and the \emph{left adjoint map} ${L}_x:A\rightarrow A$ defined by $L_x(a)=xa$ for all $a\in A$ 
and \emph{right adjoint map} $R_x:A\rightarrow A$ defined by $R_x(a)=ax$ for all $a\in A$ are 
semi-simple. So $$A=\bigoplus_{\lambda\in \mathrm{spec}(L_x)} A_\lambda(L_x)$$
and 
$$A=\bigoplus_{\lambda\in \mathrm{spec}(R_x)}A_\lambda(R_x),$$
where $\mathrm{spec}(L_x)$ and $\mathrm{spec}(R_x)$ are  the \emph{spectrum} (set of eigenvalues) of $L_x$ and $R_x$, respectively,  and, for $\lambda$ in $\mathrm{spec}(L_x)$ or $\mathrm{spec}(R_x)$, the subspaces $A_\lambda(L_x )=\{a\in A\mid L_x(a)=\lambda a\}$ and $A_\lambda(R_x )=\{a\in A\mid R_x(a)=\lambda a\}$ are 
the corresponding eigenspaces.
We call such $x$ an \emph{axis} of $A$.

A \emph{fusion law} is a pair $(\mathcal{F},\star)$ consisting of a subset $\mathcal{F}$ of $\mathbb{F}$
and a map $\star$ from $\mathcal{F}\times \mathcal{F}$ to the power set $2^\mathcal{F}$ of $\mathcal{F}$
(with infix notation).

A pair $(A,X)$ consisting of an  algebra $A$ over a field $\mathbb{F}$ with generating set $X\subseteq A$ of \emph{axes}, and  left and right \emph{fusion laws} 
$(\mathcal{F}_L,\star_L)$ and $(\mathcal{F}_R,\star_R)$, respectively, 
is called an \emph{axial algebra satisfying the left fusion law $(\mathcal{F}_L,\star_L)$ and right fusion law $(\mathcal{F}_R,\star_R)$} 
if  for all $x\in X$ we have:

\begin{enumerate}
\item The element $x$ is an axis with $\mathrm{spec}(L_x)\subseteq \mathcal{F}_L$ and $\mathrm{spec}(R_x)\subseteq \mathcal{F}_R$.
\item For all $\lambda,\mu\in \mathcal{F}_L$ we have  
$$A_\lambda(L_x) A_\mu(L_x)\subseteq \bigoplus_{\rho\in \lambda\star_L\mu}A_\rho(L_x)$$
and for all $\lambda,\mu\in \mathcal{F}_R$ we have $$A_\lambda(R_x) A_\mu(R_x)\subseteq \bigoplus_{\rho\in \lambda\star_R\mu}A_\rho(R_x).$$
\end{enumerate}

We also call $A$ an axial algebra if the set $X$ of axes is clear from the context.
If $(\mathcal{F}_L,\star_L)$ and $(\mathcal{F}_R,\star_R)$ are equal, we say the algebra $A$ with generating set of axes $X$ satisfies the fusion law 
$(\mathcal{F}_L,\star_L)=(\mathcal{F}_R,\star_R)$.
It is clearly  possible to choose these fusion laws to be equal when $A$ is a commutative axial algebra generated by axes in $X$.

An axis $x\in X$ is called \emph{primitive} if the multiplicity of the eigenvalue $1$ for both $L_x$ and $R_x$ is equal to one, so $A_1(L_x)=A_1(R_x)=\langle x\rangle$.

In general axial algebras will satisfy quite wild fusion laws, and it seems hard 
to develop a general theory. However, the study of axial algebras satisfying strict fusion laws with small sets $\mathcal{F}_L$ and $\mathcal{F}_R$ is more promising.

If the algebra $A$ is commutative, then for each axis $a$ we have that $L_a=R_a$ and we can identify the two maps $\star_L=\star_R$

Of particular interest are commutative axial algebras satisfying fusion laws of \emph{Jordan type} $\mathcal{J}(\eta)$ 
or of \emph{Monster type} $\mathcal{M}(\alpha,\beta)$.
These laws are given in \cref{jordan-monster-type}. (In the table, rows and columns are labeled by the elements of $\mathcal{F}$.
For $\lambda, \mu\in \mathcal{F}$ we find $\lambda\star \mu$ to be the set of elements given in the $\lambda,\mu$ entry of the table.)
The  interest in (commutative) axial algebras satisfying these particular laws  comes from examples related  to (the three sporadic) Fischer groups and to the Monster.
\begin{table}
$$\begin{array}{|l||l|l|l|}
\hline
\star & 1 &  0 &\eta\\
\hline\hline
1& 1&  & \eta\\
\hline
0&  & 1,0 & \eta\\
\hline
\eta &\eta& \eta  & 1,0 \\
\hline
\end{array}
\hspace{1cm}\begin{array}{|l||l|l|l|l|}
\hline
\star & 1 & 0& \alpha &\beta\\
\hline\hline
1& 1& &\alpha & \beta\\
\hline
0    &  & 0& \alpha & \beta\\
\hline
\alpha& \alpha &\alpha&  1,0& \beta\\
\hline
\beta &\beta& \beta &\beta& 1,0,\alpha \\
\hline
\end{array}
$$ 
\caption{Fusion laws of type $\mathcal{J}(\eta)$ 
and  $\mathcal{M}(\alpha,\beta)$}
\label{jordan-monster-type}
\end{table}

Indeed, (commutative) axial algebras have been introduced by Hall, Shpectorov and Rehren in \cite{hrs} where the focus is on axial algebras of Jordan type $\mathcal{J}(\eta)$ generated by a set of primitive axes. Our definition of axial algebra generalizes this concept to possibly non-commutative algebras.
Hall,  Shpectorov and Rehren  have been able to  prove that these commutative axial algebras are (quotients of) Matsuo algebras related to $3$-transposition groups, including the three sporadic Fischer groups, or $\eta=\frac{1}{2}$. (See \cite{hrs}.)
In the latter case, the classification is still open. Not only Matsuo algebras occur, but also those Jordan algebras which are generated by primitive idempotents, as their axial decomposition then equals the Peirce decomposition, which is of type  $\mathcal{J}(\frac{1}{2})$.
Among the axial algebras of Monster type we find the Griess algebras related to the Monster.

Meanwhile, several other axial algebras have been studied, related to $3$-trans\-po\-sition groups \cite{hrs,joshi,alsaeedi}, Majorana algebras \cite{majorana}, Chevalley groups \cite{demedts}, codes \cite{codealgebras}, Steiner systems \cite{fox}, or geometries \cite{hans}.  
Non-commutative axial algebras are considered  by Rowen and Segev in \cite{segev1,segev2,segev3}.

In this paper we study a class of (axial) algebras defined by edge labeled directed graphs, which have no loops nor multiple edges.
The \emph{underlying graph} $\underline\Gamma$  of a directed graph $\Gamma$ is the (ordinary) graph on the vertices of $\Gamma$ and whose edges are the sets $\{x,y\}$
where $(x,y)$ is an edge of $\Gamma$. The graph $\Gamma$ is called \emph{weakly connected} if and only if its underlying graph is connected.

Throughout this paper, $\mathbb{F}$ denotes a field and  $\Gamma=(X,E)$ a directed graph 
on the vertex set $X$ with directed edge set $E$, but no loops nor multiple edges.
The edges in $E$ are labeled by nonzero elements from $\mathbb{F}$.
The label of an edge $(x,y)$ is denoted by  $\alpha_{x,y}$. 

The algebra $A:=A_{\Gamma}$
is then  the vector space $\mathbb{F}X$ with basis $X$ and bilinear product for all $x,y\in X$ defined by
$$xy=\begin{cases}
x&\mathrm{if}\ x=y,\\
\alpha_{x,y} (x+y)&\mathrm{if}\ (x, y) \ \mathrm{is\ an\ edge\ },\\
0&\mathrm{if}\ (x,y)\ \mathrm{is \ not \ an \ edge}.\\
\end{cases}$$

The algebra $A_\Gamma$ is commutative if and only if for all edges $(x,y)\in E$ also $(y,x)\in E$ and $\alpha_{x,y}=\alpha_{y,x}$.

If $\Gamma^\top$ is the reversed graph of $\Gamma$, i.e. the graph with the same vertex set and  edges $(y,x)$, where $(x,y)\in E$,
then the algebra $A_{\Gamma^\top}$ in which an edge $(x,y)$ of $\Gamma^\top$ has label $\alpha_{y,x}$ is the opposite  algebra of the algebra $A_\Gamma$.

In our first result we determine the structure of such algebras, and show them to be axial algebras if none of the labels is $1$.

\begin{theorem}\label{algebrathm}
Let $\mathcal{F}$ be a subset of the field $\mathbb{F}$ containing $1$.
Let $\Gamma=(X,E)$ be a weakly connected directed graph with edges labeled  by elements in $\mathcal{F}$ different from $0$.
Then we have the following:

\begin{enumerate}
\item
The algebra $A_\Gamma$ over $\mathbb{F}$ is simple, unless one of the following occurs:
\begin{enumerate}[\rm(a)]
\item $\Gamma$ contains a complete subgraph $\Delta=(Y,F)$ with $|Y|>1$ and all labels equal to $\frac{1}{2}$.
If $x$ is a vertex not in $\Delta$ and there is an edge $(x,y_0)$ or $(y_0,x)$ in $E$ for some vertex $y_0$ of $\Delta$, then for all
vertices $y$ of $\Delta$ we have $(x,y)$ as well as $(y,x)$ in $E$, and,  moreover, $\alpha_{x,y}=\alpha_{x,y_0}$ and $\alpha_{y,x}=\alpha_{y_0,x}$.

In this case $A_\Gamma$ has the following ideal
$\{\sum_{y\in Y} \lambda_yy\in A\mid \sum_{y\in Y}\lambda_y=0\}$.
\item $\Gamma$ is a finite complete graph in which all labels are equal to $\frac{1}{2-|X|}$; in this case $\langle \sum_{x\in X} x\rangle$ is an ideal.
\end{enumerate}
\item
The algebra $A_{\Gamma}$ is an axial algebra generated by the axes in $X$ satisfying left and right fusion laws of type $\mathcal{G}(\mathcal{F})$ as in \cref{fusionlaw},
provided none of the labels equals $1$ and $0\in \mathcal{F}$ if $\Gamma$ is not complete.
\end{enumerate}
\begin{table}[h]
$$\begin{array}{|l||l|l|l|l|l|}
\hline
\star & 1 &  \alpha&\dots &\beta &(0)\\
\hline\hline
1& 1&\alpha& & \beta&0\\
\hline
\alpha& \alpha& 1,\alpha & &1,\alpha,\beta&1,\alpha,0\\
\hline
\vdots  &&&&&\phantom{a}\\
\hline
\beta &\beta& 1,\alpha,\beta&  & 1,\beta& 1,\beta,0 \\
\hline
(0)     &   0 & 1,\alpha,0&& 1,\beta,0 &0\\
\hline
\end{array}$$

\caption{The fusion law of Graph type $\mathcal{G}(\mathcal{F})$, where $\mathcal{F}=\{1,\alpha,\dots,\beta,(0)\}$.
The element $0$ is between brackets as it may not be an eigenvalue. This is the case when any two vertices are adjacent.}\label{fusionlaw}
\end{table}
\end{theorem}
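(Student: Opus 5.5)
For part (ii) I would start by computing the spectral decomposition of $L_x$ for a vertex $x$; the case of $R_x$ then follows by passing to the opposite algebra $A_{\Gamma^\top}$. We have $L_x(x)=x$. If $(x,y)\in E$ then $L_x(y)=\alpha_{x,y}(x+y)$, and otherwise $L_x(y)=0$. Since $\alpha_{x,y}\neq 1$, the vector $v_y:=y+\frac{\alpha_{x,y}}{\alpha_{x,y}-1}x$ is an eigenvector with eigenvalue $\alpha_{x,y}$. Hence the basis
$$\{x\}\cup\{v_y\mid (x,y)\in E\}\cup\{y\mid y\neq x,\ (x,y)\notin E\}$$
diagonalises $L_x$. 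The spectrum is $1$, the out-labels of $x$, and $0$ when $x$ is not joined by out-edges to all other vertices; this is where the hypothesis on $0\in\mathcal F$ enters.

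To get the fusion law I would use bilinearity and multiply pairs of these basis eigenvectors. The key point is that for any $y,z$ the product $yz$ lies in $\langle y,z\rangle$, and $x$ times anything lies in $\langle x,\cdot\rangle$. So the product of $v_y$ (or $y$) with $v_z$ (or $z$) lies in $\langle x,y,z\rangle$. Rewriting in the eigenbasis, the only components that occur are $1$ (from $x$) and the eigenvalues attached to $y$ and $z$. This gives every entry of \cref{fusionlaw} except the row and column of $1$. For those I would compute $x\cdot v=\lambda v$ directly. I would also check carefully the products with the axis on the right, such as $v_y x$ and $z x$. These involve $\alpha_{y,x}$ and produce an $x$-component, so I must verify that this is compatible with the stated table entries (or with the intended reading of $\lambda\star\mu$ for ordered pairs). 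I would flag this as a point to check rather than assume.

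For part (i), let $I\neq 0$ be an ideal. Even when some label equals $1$, $I$ is $L_x$- and $R_x$-invariant, so it is closed under the projections onto the generalized eigenspaces of $L_x$, since these are polynomials in $L_x$. Weak connectivity shows that if some vertex $x$ lies in $I$, then $I=A$: for an edge $(x,y)$ the products $xy$ and $yx$ give $y\in I$. So I would choose $0\neq a=\sum\lambda_y y\in I$ of minimal support $S$ and analyse it.

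For each $x$, three projections constrain $a$. The $1$-projection is a multiple of $x$ whose coefficient is $\lambda_x-\sum_{(x,y)\in E}\lambda_y\frac{\alpha_{x,y}}{\alpha_{x,y}-1}$; this coefficient must vanish. The $0$-projection removes $x$ and its out-neighbours, so minimality forces it to be $0$ or $a$ itself. The label-$\alpha$ projections separate out-neighbours by label. Applied for $x\in S$, and symmetrically with $R_x$, this forces the following: $S$ spans a complete subgraph, with all edges from $x$ into $S$ carrying one label. Applied for $x\notin S$ adjacent to $S$, it forces $x$ to be joined in both directions to all of $S$ with constant labels.

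The main obstacle is then to solve the resulting linear conditions on the $\lambda_y$. There are two cases. If $S\neq X$, I expect the conditions to give $\sum_{y\in S}\lambda_y=0$ and all labels in $S$ equal to $\frac12$, which is case (a). If $S=X$, I expect all $\lambda_y$ to be equal and the labels to be $\frac1{2-|X|}$, which is case (b). Finally I would verify directly that the two displayed subspaces really are ideals in the exceptional cases.
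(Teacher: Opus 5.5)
Your plan follows the paper's route. For (i) you take a minimal-support element $a\in I$ and constrain it by $L_x$ and $R_x$; your eigenprojections replace the paper's elements $xa-\beta a$. For (ii) you use the eigenbasis of \cref{axialprop} together with ``products lie in $\langle x,y,z\rangle$''. But in (i) the step you defer is the core of the proof (\cref{casesfora}), and your predicted outcome is wrong: the dichotomy is not $S\neq X$ versus $S=X$.

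Once the out-labels from $x\in S$ into $S$ equal a constant $\alpha^+_x$, your $1$-projection condition reads $(2\alpha^+_x-1)\lambda_x=\alpha^+_x\sigma$, with $\sigma=\sum_{y\in S}\lambda_y$. The $R_x$ version reads $(2\alpha^-_x-1)\lambda_x=\alpha^-_x\sigma$. Using $\lambda_x\neq 0$, injectivity of $t\mapsto t/(2t-1)$, and $\alpha^+_x=\alpha_{x,z}=\alpha^-_z$, one gets two alternatives. Either $\sigma=0$ and all labels in $S$ are $\frac12$, or $\sigma\neq 0$, all labels in $S$ equal one $\alpha$, all $\lambda_y$ coincide, and $\alpha=\frac{1}{2-|S|}$. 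The first alternative also occurs with $S=X$ (two vertices joined with label $\frac12$, $a=x-y$), and then you are in (a) with $Y=X$, not in (b). When $S\neq X$, the $1$-projection (for $L_x$ or $R_x$) at an outside neighbour $x$ forces $\sigma=0$, which excludes the second alternative. Constancy of the labels from such an $x$ needs an extra step when $|S|=2$, because an $\alpha$-projection can then have support $\{x,z\}$ of full size. There you should combine the $1$-projection at $x$ with $\lambda_{z'}=-\lambda_z$.

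Your claim that such an $x$ is joined to $S$ \emph{in both directions} cannot be proved. Take $X=\{x,y_1,y_2\}$, with $y_1,y_2$ joined both ways with label $\frac12$ and only the edges $(x,y_1),(x,y_2)$, labelled $\alpha\neq\frac12$. Then $\langle y_1-y_2\rangle$ is a proper ideal although $(y_1,x)\notin E$. So (a) as worded is too strong. Your projections only give all-or-none for out-edges and, separately, for in-edges, which is exactly the paper's notion of ideal subgraph used in \cref{simple}.

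Finally, (i) allows labels $1$. Then $\frac{\alpha_{x,y}}{\alpha_{x,y}-1}$ is undefined, and the generalised $1$-eigenspace of $L_x$ contains the label-$1$ out-neighbours, so your $1$-projection is not a multiple of $x$. Working directly with $xa-\beta a$, as the paper does, avoids dividing by $\alpha-1$. (In infinite dimension the projections are polynomials in $L_x$ only on the invariant subspace $\langle \{x\}\cup S\rangle$, but that suffices.)

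Your worry about products with the axis on the right in (ii) is justified, and the paper does not settle it either: \cref{fusionlemma} only treats eigenvalues different from $1$. For $X=\{x,y\}$ with $\alpha_{x,y}=\alpha\neq\gamma=\alpha_{y,x}$ one gets $v_yx=\gamma v_y+\frac{\alpha-\gamma}{\alpha-1}x$. Likewise, for $z\in A_0(L_x)$ with $(z,x)\in E$, the product $zx=\alpha_{z,x}(z+x)$ has an $x$-component. So the entries $\lambda\star 1=\{\lambda\}$ hold for all axes only when $A_\Gamma$ is commutative. In general your $\langle x,y,z\rangle$ argument yields $\lambda\star 1=\{1,\lambda\}$, and it does yield all the other entries of the table as stated.
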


The above theorem implies that even in the case that $\mathcal{F}=\{0,1,\alpha\}$ for some fixed $\alpha\in \mathbb{F}\setminus \{0,1\}$
there is a wild variety of axial algebras of graph type $\mathcal{G}(\mathcal{F})$. 

It is immediate from the definition of the algebra $A=A_{\Gamma}$ that the automorphism group
of the labeled graph $\Gamma$ embeds injectively into the automorphism group $\mathrm{Aut}(A)$ of $A$.
The group $\mathrm{Aut}(\Gamma)$ need not be the full automorphism group of the algebra $A$.

However, we are able to prove the following two results which are concerned with \emph{symmetric} directed graphs $\Gamma=(X,E)$ (which
means that if $(a,b)\in E$, then also $(b,a)\in E$) providing conditions which imply $\mathrm{Aut}(\Gamma)$ and $\mathrm{Aut}(A)$ to be isomorphic.

Let $\Gamma=(X,E)$ be a symmetric directed graph.
Then we define the \emph{degree} $\delta_v$ of a vertex $v$ to be equal to the indegree as well as to the outdegree of $v$, which are equal for symmetric directed graphs.
So $\delta_v=|\{w\in X\mid (v,w) \in E\}|$.
Moreover, by $k_{\mathrm{min}}$ and $k_{\mathrm{max}}$ we denote the minimum and maximum of the degrees $\delta_v$, with $v\in X$, if they exist.
Otherwise they are set to be $\infty$.

The \emph{girth} of $\Gamma$ (and of $\underline{\Gamma}$) is the length of the smallest cycle in $\underline{\Gamma}$, if such cycle exists. Otherwise it is set to be $\infty$.

\begin{theorem}\label{graphthm}
Suppose $\mathcal{F}$ is a subset of  the field $\mathbb{F}$.
Let $\Gamma$ be an edge labeled weakly connected symmetric directed  graph with labels in $\mathcal{F}\setminus \{0,1\}$.
Let $A_\Gamma$ be the corresponding $\mathbb{F}$-algebra.
Suppose $\Gamma$ has finite girth $g$.

If both  $2<k_\mathrm{min}<g-2$ and   $k_\mathrm{max}\leq \min(2(k_{\mathrm{min}}-1),g-3)$,
then $\mathrm{Aut}(A_{\Gamma})$ is isomorphic to  $\mathrm{Aut}(\Gamma)$, the automorphism group of the labeled graph $\Gamma$.
\end{theorem}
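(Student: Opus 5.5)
The plan is to show that every automorphism $\varphi$ of $A=A_\Gamma$ permutes the basis $X$. Once that holds, $\varphi$ induces a permutation of $X$. Comparing $\varphi(x)\varphi(y)$ with $\varphi(xy)=\alpha_{x,y}(\varphi(x)+\varphi(y))$ shows this permutation preserves edges and labels, so it lies in $\mathrm{Aut}(\Gamma)$. Together with the injective embedding $\mathrm{Aut}(\Gamma)\to\mathrm{Aut}(A)$ noted before the statement, this gives the isomorphism.

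To show $X$ is preserved, I would characterise the vertices intrinsically. The idea is to look at idempotents $e$ of $A$ and at the eigenvalue structure of $L_e$, and find an $\mathrm{Aut}(A)$-invariant property that singles out the elements of $X$.

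\textbf{Step 1.} Compute the spectrum of $L_x$ for $x\in X$ using the proof of \cref{algebrathm}(ii). For a neighbour $y$ we have $L_x(y)=\alpha_{x,y}(x+y)$, so $y+\frac{\alpha_{x,y}}{\alpha_{x,y}-1}x$ is an $\alpha_{x,y}$-eigenvector. Non-neighbours give the $0$-eigenspace, of codimension $\delta_x+1$. Hence $L_x$ is semisimple with $\dim\ker L_x = |X|-\delta_x-1$, or equivalently the image of $L_x$ has dimension $\delta_x+1$, which is at most $k_{\max}+1$.

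\textbf{Step 2.} Take a general idempotent $e=\sum_{v\in S}\lambda_v v$ with finite support $S$, all $\lambda_v\neq0$. Show that if $|S|\ge2$, then either $e^2\neq e$ or the rank of $L_e$ is larger than $k_{\max}+1$. Since $\varphi(x)$ is an idempotent whose left multiplication has the same rank as $L_x$, this forces $\varphi(x)$ to have support of size one, so $\varphi(x)=x'$ for some $x'\in X$.

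The rank estimate proceeds as follows. The image of $L_e$ contains $e\,w$ for every vertex $w$. For $w$ outside $S$ we get $e\,w=\sum_{v\in S,\,v\sim w}\lambda_v\alpha_{v,w}(v+w)$, which has a nonzero $w$-coordinate whenever $w$ is adjacent to $S$ and the relevant coefficients do not cancel. The girth hypothesis is what controls this. If $S$ has diameter less than about $g/2$, the neighbourhoods $N(v)$, $v\in S$, are essentially disjoint apart from tree-like overlaps, since any two vertices of $S$ have at most one common neighbour when $g>4$. So the number of independent vectors $e\,w$ is at least $|N(S)\setminus S|$. The condition $k_{\min}\ge3$ together with $k_{\max}\le 2(k_{\min}-1)$ then gives $|N(S)\setminus S|\ge 2(k_{\min}-1)\ge k_{\max}$. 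Add $e$ itself, and extra image directions coming from $S$, to get rank at least $k_{\max}+2$.

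If instead $S$ is spread out (large diameter, or cycles of length $g$ inside $S$), I would use the idempotent equation $e^2=e$ directly. Comparing coefficients gives $\lambda_v=\lambda_v^2+\sum_{u\in S,\,u\sim v}\lambda_u\lambda_v\alpha_{u,v}\cdot 2$-type relations. Looking at a vertex $v$ of $S$ that is extremal in the induced subgraph of $S$ (a leaf of a tree-like piece, which exists because induced subgraphs on fewer than $g$ vertices are forests) should show that its coefficient is forced to satisfy a relation making $L_e$ have too large a rank.

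\textbf{Step 3.} Conclude that $\varphi$ maps $X$ into $X$, and likewise $\varphi^{-1}$, so $\varphi$ permutes $X$. Then finish as in the first paragraph.

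The main obstacle is Step 2. The difficulty is showing that no idempotent with support of size at least two has a left multiplication of rank at most $k_{\max}+1$, uniformly for arbitrary (possibly infinite) $\Gamma$ and arbitrary labels. I would first handle supports inducing a forest of diameter less than $g-2$, where the bounds $k_{\max}\le g-3$ and $k_{\min}<g-2$ make the neighbourhood counting clean, and then reduce general supports to this case.
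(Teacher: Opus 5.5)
Your reduction is the paper's: $a=\varphi(x_0)$ is an idempotent whose adjoint maps have rank $\delta_{x_0}+1\le k_{\max}+1\le g-2$, and everything hinges on showing $|\mathrm{supp}(a)|=1$. But Step~2, which is the whole proof, is not supplied, and two ingredients are missing.

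First, you never use that $x_0$ is a \emph{primitive} axis. The equality $\dim A_1(L_a)=1$ is transported by $\varphi$. If $a=a_1+\dots+a_n$ is split along the connected components of $\mathrm{supp}(a)$, then $aa_i=a_i$, so the support is connected (\cref{components}). Without this your count of independent vectors $ew$ fails, since two components at distance $2$ share a neighbour $w$ and $ew$ mixes them.

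Second, spread-out supports are not handled by the idempotent equation at a leaf; your sketch there does not amount to an argument. The paper uses a triangularity argument instead. If $Y\subseteq\mathrm{supp}(a)$ induces a tree with leaf $y_0$, then $ay_0$ together with the $ay_j$ for the non-leaves $y_j$, ordered by distance from $y_0$, are linearly independent. The reason is that $ay_j$ involves a neighbour of $y_j$ farther from $y_0$ that occurs in no earlier product (\cref{treeindep}). Consequences:
\begin{enumerate}
\item A path on $m$ vertices in the support forces rank at least $m-1$.
\item A shortest cycle in the support forces rank at least $g-2$, and equality forces $\underline{\Gamma}$ to be a cycle (\cref{specialtree}), which $k_{\min}\ge 3$ excludes.
\item So $\mathrm{supp}(a)$ is a tree of diameter $d\le g-2$.
\end{enumerate}
Your neighbour count then works for $d\le g-3$; that bound, not \emph{about $g/2$}, is what makes each outside vertex meet the support at most once. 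The case $d=g-2$ is excluded by the $d+\delta_{a_0}+\delta_{a_d}-4\ge g$ independent images.

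Even granting all that, your closing count (\emph{add $e$ itself and extra directions from $S$}) breaks at the boundary. Suppose $S=\{x,y\}$ is an edge with $\delta_x=\delta_y=k_{\min}$ and $k_{\max}=2(k_{\min}-1)$. The outside neighbours give exactly $k_{\max}$ directions, so you need $\dim L_e(\langle x,y\rangle)\ge 2$. This can fail. Take $\alpha_{x,y}=\alpha$ and $\alpha_{y,x}=1-\alpha$ with $\alpha\notin\{0,1\}$ and $2\alpha\neq 1$. Then $e=\frac{(\alpha-1)x+\alpha y}{2\alpha-1}$ is an idempotent with $ex=(1-\alpha)e$ and $ey=\alpha e$, so $\mathrm{rank}\,L_e=\delta_x+\delta_y-1=k_{\max}+1$. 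So Step~2 is false for $L_e$ alone.

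The fix uses both adjoint maps, which for $a=\varphi(x_0)$ both have rank $\delta_{x_0}+1$. For an idempotent $e=\lambda_x x+\lambda_y y$ on an edge, the determinants of $L_e$ and $R_e$ on $\langle x,y\rangle$ are $\lambda_x^2\alpha_{x,y}+\lambda_x\lambda_y+\lambda_y^2\alpha_{y,x}$ and $\lambda_x^2\alpha_{y,x}+\lambda_x\lambda_y+\lambda_y^2\alpha_{x,y}$. The equation $e^2=e$ forces one of two cases:
\begin{enumerate}
\item $\lambda_x=\lambda_y$, and then both determinants equal $\lambda_x\neq 0$;
\item $\alpha_{x,y}+\alpha_{y,x}=1=\lambda_x+\lambda_y$, and then the two determinants sum to $1$.
\end{enumerate}
Hence one of $L_e,R_e$ has rank at least $k_{\max}+2$. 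The paper's inequality $r_a\ge n-\ell+1+\ell(k_{\min}-1)$ likewise only yields $2k_{\min}-1$ for $n=\ell=2$, so this extra computation is needed there as well.
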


In the smallest case where $k_\mathrm{min}=k_\mathrm{max}=3$ and $g\geq 6$ we already encounter various  interesting graphs
satisfying the hypothesis of the theorem. 
Indeed, let $\Pi$ be a generalized $n$-gon with $n\geq 3$, with three points per line and three lines per point
and $\Gamma$ its directed incidence graph, i.e. the graph with as vertices the points and lines of $\Pi$ and as edges  pairs $(x,y)$ with $x$ a point on a line $y$ or $x$ a line through the point $y$. Then $\Gamma$ has valency $k_\mathrm{min}=k_\mathrm{max}=3$ and girth $g=2n\geq 6$.
For each labeling of the edges $(x,y)$, where $x$ is a point, with  an element $\alpha\in \mathbb{F}\setminus\{0,1\}$ and each edge $(x,y)$ with $x$ a line
with $\beta\in \mathbb{F}\setminus\{0,1\}$  we obtain an algebra $A_\Gamma$ whose automorphism group is isomorphic to
the automorphism group of $\Gamma$.  
If $n=3$ and $\alpha\neq \beta$ this automorphism group is isomorphic to $\mathrm{PSL}_3(2)$, while if $\alpha=\beta$,
the automorphism group is $\mathrm{PSL}_3(2):2$.
For $n=4$ and $\alpha\neq \beta$ we find the automorphism group to be isomorphic to $\mathrm{Sym}_6$ and for $n=6$ to $\mathrm{G}_2(2)$.
In case $\alpha=\beta$
the groups are twice as big.
Notice the algebra $A_\Gamma$ is commutative if and only if $\alpha=\beta$.

In the above theorem we insist on information on all valencies of vertices and relate them to the girth of the graph. By considering
the directed incidence graph $\Gamma$ of a graph or partial linear space $\Delta$,  we can say even more.
This is the graph with vertex set consisting of the  vertices and edges of $\Delta$ and as edges the pairs $(x,e)$ and $(e,x)$ where $x$ is a vertex on the edge or line $y$ of  $\Delta$.

\begin{theorem}\label{incgraphthm}
Suppose $\mathcal{F}$ is a subset of  the field $\mathbb{F}$.
Let $\Delta$ be a connected graph with all vertices  of degree at least three or a connected partial linear space with three points per line and all points on at least four lines.
Let $\Gamma$ be the directed incidence graph of $\Delta$.
Suppose the edges of $\Gamma$ are labeled with elements from  $\mathcal{F}\setminus\{0,1\}$ or in case $\mathbb{F}=\mathbb{F}_2$ with $1$.

Then $\mathrm{Aut}(A_{\Gamma})$ is isomorphic to  $\mathrm{Aut}(\Gamma)$, the automorphism group of the labeled graph $\Gamma$.
\end{theorem}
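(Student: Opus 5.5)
The plan is to show that every automorphism $\varphi$ of $A=A_\Gamma$ permutes the basis $X$. Once that is established, $\varphi$ restricted to $X$ preserves the product $xy=\alpha_{x,y}(x+y)$. So it maps edges to edges with the same labels, and it maps non-edges to non-edges. Hence $\varphi$ comes from a labeled graph automorphism, and the obvious embedding $\mathrm{Aut}(\Gamma)\to\mathrm{Aut}(A)$ is onto. The whole work is therefore to characterize the elements of $X$ intrinsically in $A$, for instance as the idempotents $e$ such that $L_e$ (and $R_e$) is semisimple with a prescribed spectrum and eigenspace dimensions. We write $\Gamma$ for the incidence graph; its vertex set is $V\cup L$, where $V$ is the set of points and $L$ the set of lines or edges of $\Delta$.

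The first step is to compute $L_x$ for $x\in X$. We have $L_x x=x$. For a neighbour $y$ we get $L_x y=\alpha_{x,y}(x+y)$, so $y+\frac{\alpha_{x,y}}{\alpha_{x,y}-1}x$ is an eigenvector with eigenvalue $\alpha_{x,y}$ (this needs $\alpha\neq1$). Non-neighbours span the $0$-eigenspace. So $\dim\ker L_x=|X|-1-\delta_x$ in the finite case; in general the $0$-eigenspace is the span of the non-neighbours, which has finite codimension $1+\delta_x$. The $\mathbb{F}_2$ case with label $1$ is handled separately: there $L_x$ is not semisimple, but the image $L_x(A)$ is spanned by $x$ and the $x+y$, so it still has dimension $1+\delta_x$. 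The invariant I would use is therefore the rank of $L_e$ (the codimension of $\ker L_e$), since it makes sense in both cases.

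The main step is to show that if $e=\sum_{z}\lambda_z z$ is a nonzero idempotent with $L_e$ of small rank, namely at most $1+k_{\max}$ (or the analogous bound in the $\mathbb{F}_2$ case), then $e\in X$. Let $S$ be the finite support of $e$. I would compute $L_e w$ for vertices $w$ adjacent to $S$: such a product is nonzero as soon as $w$ has a neighbour in $S$ and no cancellation occurs. The images $L_e w$ and $L_e s$ for $s\in S$ should then span a space whose dimension grows with $|S|$. This is where the incidence structure is used. The graph $\Gamma$ is bipartite with girth at least $6$ (girth $8$ if $\Delta$ is a graph with no triangles; in general, two points share at most one line). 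Also, every point has degree at least $3$ (respectively at least $4$), while every line has degree $2$ (respectively $3$).

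I expect the counting of this rank for $|S|\geq 2$ to be the main obstacle. It requires controlling cancellation among the contributions $\lambda_s\alpha_{s,w}(s+w)$ when $w$ is adjacent to several vertices of $S$. The approach I would try first is as follows. Take a vertex $w$ outside $S$ with a unique neighbour in $S$. Then the coefficient of $w$ in $L_e w$ is $\lambda_s\alpha_{s,w}\ne0$, so the vectors $L_e w$ for such $w$ are linearly independent modulo the span of $S$. The girth condition guarantees many such $w$: each $s\in S$ has at least $\delta_s-1$ private neighbours once $S$ is "spread out". The degree hypotheses (points on at least $3$ or at least $4$ lines) should then push the rank above the maximum of $1+\delta_x$, which is achieved at a single vertex.

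One more complication is that points and lines have different degrees, so $\varphi$ could a priori send a line $\ell$ to an idempotent of rank $3$ (or $4$) that is not a basis element. To rule this out I would first show that $\varphi$ maps point-vertices onto point-vertices, because these are the minimal-rank idempotents of the high-rank type. Lines would then be recovered as determined by products of points: a line $\ell$ is characterized as the idempotent $e$ with $\mathrm{rank}\,L_e=$ line degree $+1$ whose eigenvectors involve exactly the points on it. Finally, the idempotent equation $e^2=e$ restricted to $S$ (comparing coefficients) forces $|S|=1$ and $\lambda=1$. This gives $\varphi(X)=X$, and hence the isomorphism $\mathrm{Aut}(A_\Gamma)\cong\mathrm{Aut}(\Gamma)$.
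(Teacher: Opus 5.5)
There is a genuine gap. The invariant you settle on (the rank of $L_e$, with target bound $1+k_{\max}$) does not characterize $X$, and you identify points and lines in the wrong order.

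\textbf{The main step is false as stated.} Any two line-vertices $f_1,f_2$ of $\Gamma$ are non-adjacent, since $\Gamma$ is bipartite. Hence $f_1+f_2$ is an idempotent, and $L_{f_1+f_2}$ has rank at most $6$ in the graph case, or at most $8$ in the partial linear space case. This is at most $1+k_{\max}$ as soon as some point has degree at least $5$ (resp.\ $7$). Moreover, $k_{\max}$ may be infinite, in which case your bound says nothing.

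What excludes such elements is not rank but primitivity. The $1$-eigenspace of $L_{\varphi(x)}$ is one-dimensional, so by \cref{components} the support of $\varphi(x)$ is connected. You explicitly discard the eigenspace data in favour of rank alone, so this ingredient is missing.

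Likewise, $e^2=e$ does not force $|S|=1$. With a constant symmetric label $\alpha$, $1+2\alpha\neq 0$, the element $e=(1+2\alpha)^{-1}(p+\ell)$ for an incident point $p$ and line $\ell$ is an idempotent.

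\textbf{Points cannot be identified first.} The element $e$ above already blocks this. $L_e$ has rank $\delta_p+\delta_\ell$, where $\delta_\ell\in\{2,3\}$. For $\alpha\neq -1$ its $1$-eigenspace is $\langle p+\ell\rangle$. So $e$ is a primitive idempotent with connected support, and it has the same rank as $L_{p'}$ for any point $p'$ of degree $\delta_p+\delta_\ell-1$. Point degrees are bounded only from below and may be infinite, so no rank/primitivity criterion singles out points.

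The argument works in the opposite direction, because lines have uniformly small degree.
\begin{enumerate}
\item For a line $f$, $\varphi(f)$ is a primitive idempotent with $\mathrm{rank}\,L=\mathrm{rank}\,R=3$ (resp.\ $4$).
\item Connectedness together with girth $\geq 6$ makes its support a tree of small diameter, by \cref{tree} and \cref{specialtree}.
\item Each leaf then has at least $\delta-1\geq 2$ (resp.\ $\geq 3$) neighbours outside the support that are adjacent to no other support vertex. By \cref{leaves}, this pushes the rank above $3$ (resp.\ $4$) unless the support is a single vertex.
\item The rank then forces that vertex to be a line.
\end{enumerate}
Your private-neighbour counting is the right tool, but it only closes up for this small, uniform target rank.

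Points are recovered afterwards. If $e_0,k_0$ are two lines through a point $y$, then $\varphi(y)\in L_{\varphi(e_0)}(A)\cap L_{\varphi(k_0)}(A)=\langle x\rangle$, where $x$ is the common point of the two image lines. Idempotency then gives $\varphi(y)=x$.

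\textbf{The $\mathbb{F}_2$ case needs more than ``the analogous bound''.} With label $1$, $L_f$ is not semisimple and its $1$-eigenspace is $2$-dimensional. Primitivity is therefore unavailable, and the support of $\varphi(f)$ could a priori have two components. This must be excluded by a separate counting argument, as in \cref{autgroupthm3}.
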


As the restrictions on $\Delta$ are rather weak, we can apply our results to many interesting graphs or partial linear spaces $\Delta$.
We refer the reader to \cref{section:auto} for such examples.

We notice that the graphs in the above theorems need not be finite graphs, but in \cref{graphthm}, the vertices of these graphs do of course have finite degree.

Popov \cite{popov} raised the question whether each finite group is the automorphism group of a finite dimensional
simple algebra. He and Gordeev \cite{gordeev} provided an affirmative answer for sufficiently large  fields.

In \cite{evolution} Costoya, Mu\~{n}oz, Tocino, and Viruel showed that for each finite group $G$ there is a finite dimensional simple evolution algebra
whose automorphism group is isomorphic to $G$. They require the field $\mathbb{F}$ to be of size bigger than twice the order of the group.
The same result has been obtained in \cite{evolution2} for fields of characteristic $0$.

Using Frucht's famous theorem \cite{frucht39,frucht49} that   each finite group $G$ is the automorphism group of a finite graph,
and its extension to infinite groups and graphs by de Groot \cite{groot} and Sabidussi \cite{sabidussi2}, we can show:

\begin{theorem}\label{groupthm}
Let $G$ be a  group and $\mathbb{F}$ a field.
\begin{enumerate}
\item If $\mathbb{F}$ contains at least three elements, then
there exist infinitely many non-isomorphic  directed graphs $\Gamma=(X,E)$ with edges labeled by elements of $\mathbb{F}\setminus\{0,1\}$
such that $A_\Gamma$ is a commutative simple axial  algebra over $\mathbb{F}$ generated by a set of primitive idempotents $X$
of graph type $\mathcal{G}(\mathcal{F})$ with $\mathcal{F}$ being the set containing $0,1$ and all labels, and  $G$ isomorphic to $\mathrm{Aut}(A_\Gamma)$.
\item
If $\mathbb{F}$ contains at least four elements, then there exist infinitely many non-isomorphic  directed graphs $\Gamma=(X,E)$ with edges labeled by elements of $\mathbb{F}\setminus \{0,1\}$
such that $A_\Gamma$ is a  non-commutative simple  axial algebra over $\mathbb{F}$ generated by a set of primitive idempotents $X$
of graph type $\mathcal{G}(\mathcal{F})$, where $\mathcal{F}$ is the set containing $0,1$ and all labels, and  with $G$ isomorphic to $\mathrm{Aut}(A_\Gamma)$.
\item
If $\mathbb{F}$ has only two elements, then there exists infinitely many non-isomorphic directed graphs $\Gamma$ with edges labeled by $1$ such that $A_\Gamma$ over $\mathbb{F}_2$ is simple and has automorphism group isomorphic to $G$.
\end{enumerate}
Moreover, in case $G$ is a finite group, the graphs $\Gamma$ in the above statements can be assumed to be finite, implying the  algebras $A_\Gamma$ to be finite dimensional. 
\end{theorem}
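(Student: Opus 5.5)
The plan is to combine Frucht's theorem (and its infinite extension by de Groot and Sabidussi) with \cref{incgraphthm} and \cref{algebrathm}. Given $G$, we first choose a connected graph $\Delta_0$ with $\mathrm{Aut}(\Delta_0)\cong G$. By the standard strengthenings of Frucht/Sabidussi we may take $\Delta_0$ with all vertices of degree at least three; for instance, Sabidussi shows that $G$ is the automorphism group of infinitely many non-isomorphic connected $3$-regular graphs, finite if $G$ is finite. This already gives infinitely many choices for $\Delta_0$.

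Next we pass to the incidence graph $\Gamma$ of $\Delta_0$. We must check that $\mathrm{Aut}(\Gamma)$, as a labeled graph, equals $\mathrm{Aut}(\Delta_0)$. If some automorphism could swap the vertices of $\Delta_0$ with its edges, the group might double. To prevent this we use the labels.
\begin{enumerate}
\item If $|\mathbb{F}|\geq 3$, there is $\alpha\in\mathbb{F}\setminus\{0,1\}$. Label every edge $(x,e)$ and $(e,x)$ by $\alpha$. This makes $A_\Gamma$ commutative. The vertex/edge swap is excluded by degrees: vertices of $\Delta_0$ have degree at least $3$ in $\Gamma$, while edges of $\Delta_0$ have degree $2$.
\item If $|\mathbb{F}|\geq 4$, take distinct $\alpha,\beta\in\mathbb{F}\setminus\{0,1\}$. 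Label $(x,e)$ by $\alpha$ and $(e,x)$ by $\beta$. Then $A_\Gamma$ is non-commutative.
\item If $\mathbb{F}=\mathbb{F}_2$, label all edges by $1$, which \cref{incgraphthm} allows.
\end{enumerate}
In each case $\mathrm{Aut}(\Gamma)\cong\mathrm{Aut}(\Delta_0)\cong G$. Applying \cref{incgraphthm} then gives $\mathrm{Aut}(A_\Gamma)\cong G$. Non-isomorphic choices of $\Delta_0$ give non-isomorphic $\Gamma$, since $\Delta_0$ is recovered from $\Gamma$ as the set of vertices of degree at least $3$ together with the incidence. If $G$ is finite, $\Delta_0$ and hence $\Gamma$ are finite, so $A_\Gamma$ is finite dimensional.

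It remains to check simplicity and the axial structure using \cref{algebrathm}. For (i) and (ii), no label is $1$ and $\Gamma$ is not complete (the two edges of $\Delta_0$ are non-adjacent), so part (ii) of that theorem gives an axial algebra of type $\mathcal{G}(\mathcal{F})$ with $0\in\mathcal{F}$.

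Primitivity needs a separate argument. I expect to use the explicit eigenvectors behind \cref{algebrathm}: for $L_x$ these are $x$, the vectors $y-\lambda x$ for neighbours $y$, and the non-neighbours. One checks that the $1$-eigenspace is $\langle x\rangle$, which holds because no label equals $1$ (for $\mathbb{F}_2$ primitivity is not claimed).

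For simplicity, exceptions (a) and (b) of \cref{algebrathm} must be ruled out. Exception (b) needs $\Gamma$ complete, which it is not. Exception (a) needs a complete subgraph with at least two vertices, that is, an edge $\{x,e\}$. Its condition then forces every neighbour of $x$ to be adjacent to $e$ as well. This fails because $\Gamma$ is bipartite and $x$ has other neighbours. Indeed, since $\Gamma$ has no triangles, any outside vertex adjacent to one endpoint of the edge would have to be adjacent to both, which is impossible.

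The main obstacle is the first step: securing Frucht-type graphs with minimum degree at least three, infinitely many of them, for arbitrary (possibly infinite) $G$. For this I will cite Sabidussi's and de Groot's results that such graphs may be taken with prescribed minimum degree, or 3-regular in the finite case.
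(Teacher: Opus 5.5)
Your route is the paper's: take the directed incidence graph of a connected graph $\Delta_0$ with minimum degree at least three and $\mathrm{Aut}(\Delta_0)\cong G$, then label it. The labelling is either constant, or $\alpha$ on vertex-to-edge and $\beta$ on edge-to-vertex edges, or all $1$ over $\mathbb{F}_2$. Then apply \cref{incgraphthm}, get simplicity from the absence of triangles, and get the primitive axial structure from \cref{algebrathm}. All of these downstream steps are correct.

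The gap is the input you defer to the literature, and it is exactly the step the paper proves itself. Your ``for instance'' claim, that Sabidussi realizes $G$ by infinitely many connected $3$-regular graphs, is valid only for finite $G$. A connected locally finite graph is countable, so its automorphism group embeds in $\mathrm{Sym}(\mathbb{N})$ and has cardinality at most $2^{\aleph_0}$. No group of larger cardinality can arise this way. For infinite $G$ you therefore need graphs with vertices of infinite degree. The paper does not treat ``minimum degree at least three'' as available from de Groot or Sabidussi. It says explicitly that their constructions must be modified to guarantee it, and to produce infinitely many non-isomorphic examples. So citing them for ``prescribed minimum degree'' does not settle the infinite case.

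The paper closes this with a modified de Groot construction:
\begin{enumerate}
\item Start from the Cayley graph of $G$ with respect to a generating set $S$ with $|S|\geq 4$.
\item Replace each edge $(a,as)$ by a copy of the Frucht graph $\Phi$, which is cubic with trivial automorphism group. Join it to $a$ at a vertex $q$ and to $as$ at a neighbour $r$ of $q$.
\item Attach at $r$ a rooted tree $T_s$ of countable depth whose vertex degrees are pairwise distinct cardinals exceeding $|S|$, with the degree of $r$ depending on $s$.
\end{enumerate}
Every vertex then has degree at least three. The Cayley graph, together with edge directions and generator labels, can be read off from the degrees, so the automorphism group is exactly $G$. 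Varying the rigid cubic gadget, or the degree of $r$, gives infinitely many non-isomorphic such graphs. With this construction for infinite $G$, and Sabidussi's $3$-regular graphs for finite $G$, the rest of your argument goes through as written.
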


Gorshkov, M$^\mathrm{c}$Inroy, Shumba and Shpectorov showed \cite{gorshkov2023automorphism} that if $\frac{1}{2}\not\in \mathcal{F}$,
then the automorphism group of a finite dimensional commutative axial algebra with  fusion law defined on $\mathcal{F}$ will be finite.
The above \cref{groupthm} shows  that each finite group is the automorphism group of a finite dimensional commutative axial algebra,
providing a counterpart to this result.

The paper is organised as follows.
In the next section we consider for a weakly connected and labeled directed graph $\Gamma$ the algebra $A_\Gamma$ and find all possible ideals, leading to a proof of the first part of \cref{algebrathm}.
In \cref{section:axial} we determine the fusion laws found in \cref{algebrathm}.

In \cref{section:idempotent} we investigate idempotents $a$ in an algebra $A_\Gamma$ for a labeled graph $\Gamma=(X,E)$ for which ${L}_a$ and ${R}_a$ have a small rank as linear map from $A$ to itself.
In particular, we deduce criteria on this rank in relation to the girth of the graph $\Gamma$ and the degrees  of its vertices to be able to conclude that the element $a$ is a member of $X$.  
These investigations form the heart of the proofs of \cref{graphthm} and \cref{incgraphthm}, which are provided in \cref{section:auto}.

In the final \cref{section:frucht} we deduce  \cref{groupthm}  from \cref{incgraphthm}.

\medskip
\noindent
{\bf Acknowledgment}. The author wants to thank Yoav Segev for several constructive comments on a previous version of this paper.
Moreover, Arjeh Cohen and Jonathan Hall celebrated their $75^{th}$ birthday in 2024. Sergey Shpectorov celebrated his $65^{th}$ birthday.
Their mathematics and friendship has been an inspiration for my work in general and this paper in particular.
Finally, we thank the anonymous referee for several comments improving the presentation of this paper. 

\section{Algebras from graphs}
\label{section:algebra}
Let $\mathbb{F}$ be a field, $\mathcal{F}\subseteq \mathbb{F}$ be a set  with $1\in \mathcal{F}$
and $\Gamma=(X,E)$  a weakly connected directed graph, with vertex set $X$ and edge set $E$ and all edges  labeled
with  elements from the set $\mathcal{F}$ different from $0$. (The condition that $1\in \mathcal{F}$ will play a role in the following sections.)

By $A:=A_{\Gamma}$ we denote the algebra on the vector space $\mathbb{F}X$ with basis $X$ and bilinear product defined by 
$$xy=\begin{cases}
x&\mathrm{if}\ x=y,\\
\alpha_{x,y} (x+y)&\mathrm{if}\ (x,y) \ \mathrm{is\ an\ edge\ with\ label \ }\alpha_{x,y},\\
0 &\mathrm{if}\ (x,y)\ \mathrm{is\ not\ an\ edge},\\
\end{cases}$$
for all $x,y\in X$.

If $a\in A$, then, as $X$ is a basis of $A$, we find  $a=\displaystyle\displaystyle\sum_{x\in X}\lambda_x x$, where  $\lambda_x\in \mathbb{F}$.
By $\mathrm{supp}(a)$ we denote the set of elements $x\in X$ with $\lambda_x\neq 0$. It is called the \emph{support} of $a$.
Notice that this support is a finite subset of $X$.

For $x,y\in X$ we write $x\sim y$ and call $y$ \emph{adjacent} to (or a \emph{neighbor} of)  $x$ if and only if $x$ and $y$ are distinct and  $(x,y)$  is an edge, by $x\not\sim y$ we denote that $x$ and $y$ are distinct and not adjacent.

\begin{example}\label{cayley}
Let $G$ be a group and $S$ a set of generators for $G$.
Consider $\Gamma$ to be the \emph{Cayley graph} of $G$ with respect to the generators in $S$.
This is the labeled directed graph with vertex set $X=G$ and  edges the pairs $(g,gs)$ with label $s$, where $g\in G$ and $s\in S$.

Suppose $\mathbb{F}$ is a field and  $\alpha:S\rightarrow\mathbb{F}\setminus \{0\}$  a map.
Then for all $g\in G$ and $s\in S$ we can replace the label $s$ of an edge of $\Gamma$ with $\alpha(s)$.
The algebra $A_\Gamma$ of this so obtained labeled graph $\Gamma$ admits the group $G$ as a group of automorphisms acting by left multiplication on the basis elements in $X$. 
\end{example}

\begin{lemma}
If $\Gamma$ contains an edge, then $A$ is non-associative.
\end{lemma}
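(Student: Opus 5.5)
The plan is to exhibit one explicit triple of basis elements on which associativity fails. Let $(x,y)\in E$ be an edge with label $\alpha=\alpha_{x,y}\neq 0$; recall $x\neq y$ since $\Gamma$ has no loops. I will compare $(xx)y$ with $x(xy)$.

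First, using $x^2=x$, we get $(xx)y=xy=\alpha(x+y)=\alpha x+\alpha y$. Second, by bilinearity together with $xx=x$ and $xy=\alpha(x+y)$,
$$x(xy)=\alpha\,(xx+xy)=\alpha\bigl(x+\alpha(x+y)\bigr)=(\alpha+\alpha^2)\,x+\alpha^2\,y.$$

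Since $x$ and $y$ are distinct elements of the basis $X$, we may compare coefficients. The coefficients of $x$ are $\alpha$ and $\alpha+\alpha^2$. These differ because $\alpha^2\neq 0$, as $\alpha$ is a nonzero element of a field. Hence $(xx)y\neq x(xy)$, and so $A$ is not associative. Note that this holds for every nonzero label, including $\alpha=1$; the coefficient of $y$ would only detect failure when $\alpha\neq1$, which is why the comparison uses the coefficient of $x$.

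There is no real obstacle here. The only point needing care is to choose a product that involves only the single edge $(x,y)$, so that we never need to know whether $(y,x)$ is also an edge or what its label is. The triple $x,x,y$ achieves exactly this.
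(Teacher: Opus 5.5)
Your proof is correct and is essentially the paper's own argument. The paper also compares $(xx)y=\alpha_{x,y}(x+y)$ with $x(xy)=\alpha_{x,y}x+\alpha_{x,y}^2(x+y)$, and from the coefficient of $x$ obtains $\alpha_{x,y}=\alpha_{x,y}+\alpha_{x,y}^2$, which would force $\alpha_{x,y}=0$. Your remark that the $x$-coefficient, unlike the $y$-coefficient, covers every nonzero label including $1$ is a correct and worthwhile observation.
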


\begin{proof}
Suppose $(x,y)$ is an edge in $\Gamma$.
Then in $A_\Gamma$ we find $$(xx)y=xy=\alpha_{x,y} (x+y),$$
while $$x(xy)=x(\alpha_{x,y}(x+y))=\alpha_{x,y} x+ \alpha_{x,y}^2(x+y).$$
So, $A$ being associative implies 
$\alpha_{x,y}=\alpha_{x,y}+\alpha_{x,y}^2$ and hence
$\alpha_{x,y}=0$, which contradicts our assumptions.
\end{proof}

We will determine the structure of the algebra $A=A_\Gamma$.
Before doing so, we first give a few properties of the multiplication of an arbitrary element $a\in A$ with elements from $X$,
which we will use without reference in the sequel of this paper.

\begin{lemma}
Let $a$ be an element of $A$ and $x\in X$.
Then we have the following:
\begin{enumerate}
\item   $\mathrm{supp}(ax)$  equals $\{z\in \mathrm{supp}(a)\mid z\sim x\}$
or $\{z\in \mathrm{supp}(a)\mid z\sim x\}\cup \{x\}$
and $\mathrm{supp}(xa)$  equals $\{z\in \mathrm{supp}(a)\mid x\sim z\}$
or $\{z\in \mathrm{supp}(a)\mid x\sim z\}\cup \{x\}$.

\item If $x\not \in \mathrm{supp}(a)$ is on  an edge $(x,z)$ for only one element $z$ in  $\mathrm{supp}(a)$, then $\mathrm{supp}(xa)=\{x,z\}$.
If $x$ is on an edge $(z,x)$ for only one element $z$ in  $\mathrm{supp}(a)$, then $\mathrm{supp}(ax)=\{x,z\}$.
\end{enumerate}
\end{lemma}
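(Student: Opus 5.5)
The plan is to expand $a=\sum_{z\in X}\lambda_z z$ and multiply term by term with $x$, using only the defining product rule of $A_\Gamma$. I compute $ax=\sum_z \lambda_z (zx)$. The terms split by the relation of $z$ to $x$:
\begin{itemize}
\item[] if $z=x$, the term is $\lambda_x x$;
\item[] if $z\neq x$ and $(z,x)\in E$, it is $\lambda_z\alpha_{z,x}(z+x)$;
\item[] otherwise it is $0$.
\end{itemize}
Hence
$$ax=\Bigl(\lambda_x+\sum_{z\in \mathrm{supp}(a),\,(z,x)\in E}\lambda_z\alpha_{z,x}\Bigr)x+\sum_{z\in\mathrm{supp}(a),\,(z,x)\in E}\lambda_z\alpha_{z,x}\, z .$$
Since all labels are nonzero, each coefficient $\lambda_z\alpha_{z,x}$ with $z\in\mathrm{supp}(a)$ is nonzero. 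The distinct basis vectors $z\neq x$ therefore appear with nonzero coefficient exactly when $z\in\mathrm{supp}(a)$ and $(z,x)$ is an edge. The only remaining basis vector is $x$, whose coefficient may or may not vanish.

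This gives the first part of (i) for $ax$. Here $z\sim x$ is to be read as ``$(z,x)$ is an edge'', as for $ax$ in the statement. The statement for $xa$ is the mirror computation, using $xz=\alpha_{x,z}(x+z)$ for edges $(x,z)$. It can also be obtained by passing to the opposite algebra $A_{\Gamma^\top}$.

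For (ii), the case for $xa$: assume $x\notin\mathrm{supp}(a)$ and that $z$ is the unique element of $\mathrm{supp}(a)$ with $(x,z)\in E$. Then $\lambda_x=0$ and the sum has the single term $\lambda_z\alpha_{x,z}(x+z)$. This is nonzero on both $x$ and $z$, so $\mathrm{supp}(xa)=\{x,z\}$.

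For the case of $ax$, the statement omits the hypothesis $x\notin\mathrm{supp}(a)$. I would read it as implicitly carried over from the first sentence, and handle it identically: the coefficient of $x$ is then $\lambda_z\alpha_{z,x}\neq 0$.

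There is no real obstacle. The only care needed is tracking the coefficient of $x$, which is what decides between the two alternatives in (i). The rest is the observation that distinct basis vectors cannot cancel and that the labels $\alpha$ are nonzero.
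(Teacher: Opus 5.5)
Your proposal is correct and takes the same approach as the paper. The paper's proof just says the lemma follows immediately from the definition of the product, and you have written out that coefficient computation explicitly.

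You are also right that the second half of (ii) needs the hypothesis $x\notin\mathrm{supp}(a)$. Without it, the coefficient $\lambda_x+\lambda_z\alpha_{z,x}$ of $x$ in $ax$ can vanish, leaving $\mathrm{supp}(ax)=\{z\}$. So carrying that hypothesis over from the first half is the right reading.
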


\begin{proof}
This follows immediately from the definition of the product.
\end{proof}

An induced subgraph $\Delta=(Y,F)$ of $\Gamma$ (with labels inherited from $\Gamma$) is called an \emph{ideal subgraph}
if $|Y|\geq 2$,  all vertices in $Y$ are adjacent to each other,   all edges of $\Delta$ are labeled by $\frac{1}{2}$, and, moreover, for each vertex $x \in X\setminus Y$ the vertex $x$
is adjacent to no vertex in $Y$ or to all, and every vertex $y\in Y$ is adjacent to $x$ or none is.
Finally, for fixed $x\in X\setminus Y$ all  edges
$(x,y)$  with $y\in Y$  have the same label $\alpha_x$, while the edges $(y,x)$ all have the same label $\beta_x$.

Given an ideal subgraph $\Delta=(Y,F)$ of $\Gamma=(X,E)$ we define the graph $\Gamma/Y$ 
to be the graph with vertex set $(X\setminus Y)\cup \{\Delta\}$ and edges $(x,y)$ where $x$ and $y$ are vertices from $X\setminus Y$ adjacent in $\Gamma$ with label $\alpha_{x,y}$ and
$(x,\Delta)$ where $x\in X\setminus Y$ is adjacent  to a vertex $y\in Y$ with label $\alpha_{x}$
and  $(\Delta,x)$ with label $\beta_{x}$.
We contract the subgraph $\Delta$ to a single point.

\begin{lemma}\label{idealgraph}
Suppose  $\Delta=(Y,F)$ is an ideal subgraph of $\Gamma$. Then any subspace of $$I_Y=\langle\displaystyle\sum_{y\in Y} \lambda_y y\in A\mid\displaystyle\sum_{y\in Y} \lambda_y=0 \rangle$$
is a two-sided ideal of $A$.

The algebra $A/I_Y$ is isomorphic to $A_{\Gamma/Y}$. 
\end{lemma}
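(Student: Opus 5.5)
The plan is to verify directly on basis elements that $x\cdot u$ and $u\cdot x$ lie in $\langle u\rangle$ for every $x\in X$ and every $u\in I_Y$. By bilinearity this makes every subspace $U\subseteq I_Y$ a two-sided ideal. It suffices to take $u=y-y'$ with $y,y'\in Y$ distinct, since these span $I_Y$. However, to handle an arbitrary subspace we will show more: for each fixed $x$, left and right multiplication by $x$ act on $I_Y$ as a scalar multiple of the identity. Then $xU\subseteq U$ and $Ux\subseteq U$.

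There are three cases for $x$.

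\emph{Case 1: $x\in X\setminus Y$ is adjacent to no vertex of $Y$.} By the definition of an ideal subgraph, no vertex of $Y$ is adjacent to $x$ either. Hence $xy=yx=0$ for all $y\in Y$, and $x$ acts as $0$ on $I_Y$.

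\emph{Case 2: $x\in X\setminus Y$ is adjacent to all of $Y$.} Then all of $Y$ is adjacent to $x$, and $xy=\alpha_x(x+y)$ for every $y\in Y$. For $u=\sum\lambda_y y$ with $\sum\lambda_y=0$ we get
$$xu=\alpha_x\Big(\big(\textstyle\sum\lambda_y\big)x+u\Big)=\alpha_x u.$$
Similarly $ux=\beta_x u$.

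\emph{Case 3: $x=y_0\in Y$.} Here $y_0y_0=y_0$ and $y_0y=\tfrac12(y_0+y)$ for $y\neq y_0$. So for $u$ as above,
$$y_0u=\lambda_{y_0}y_0+\tfrac12\sum_{y\ne y_0}\lambda_y(y_0+y)=\lambda_{y_0}y_0-\tfrac12\lambda_{y_0}y_0+\tfrac12(u-\lambda_{y_0}y_0)=\tfrac12 u,$$
where we used $\sum_{y\neq y_0}\lambda_y=-\lambda_{y_0}$. The right-hand computation is symmetric because all labels inside $\Delta$ equal $\tfrac12$, so $uy_0=\tfrac12 u$ as well.

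This completes the first claim. The only delicate point is the convention that the adjacency conditions in the definition of an ideal subgraph force both directions of edges between $x$ and $Y$ to be uniform; Cases 1 and 2 use exactly that.

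For the quotient, define a linear map $\phi\colon A\to A_{\Gamma/Y}$ by $\phi(x)=x$ for $x\in X\setminus Y$ and $\phi(y)=\Delta$ for $y\in Y$. It is surjective, and its kernel is exactly $I_Y$: an element maps to zero iff its coefficients outside $Y$ vanish and its $Y$-coefficients sum to $0$. It remains to check $\phi(ab)=\phi(a)\phi(b)$ on basis pairs.

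\emph{Both outside $Y$:} the product is unchanged, since the induced graph on $X\setminus Y$ is the same.

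\emph{One inside, one outside:} $\phi(xy)=\alpha_x(x+\Delta)=x\Delta$ when $x$ is adjacent to $Y$. When it is not, both sides are $0$. The product $yx$ is handled the same way using $\beta_x$.

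\emph{Both inside:} $\phi(yy)=\Delta=\Delta\Delta$, and for $y\ne y'$ we get $\phi(yy')=\tfrac12(\Delta+\Delta)=\Delta$. This is precisely where the label $\tfrac12$ is needed.

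Hence $A/I_Y\cong A_{\Gamma/Y}$. I expect no real obstacle here; the main care is bookkeeping the two edge directions and confirming that $\Gamma/Y$ has no loop, which holds since the vertex $\Delta$ multiplies as an idempotent.
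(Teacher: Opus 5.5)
Your proof is correct and matches the paper's argument: it also shows that each basis vector acts on $I_Y$ as a scalar, namely $\tfrac12$ for $x\in Y$ and $0$ or $\alpha_x$ (left), resp.\ $0$ or $\beta_x$ (right), for $x\notin Y$, and you additionally write out the quotient map $\phi$ that the paper dismisses with ``clearly''. One small point: the paper's definition of an ideal subgraph states the conditions on edges $(x,y)$ and on edges $(y,x)$ separately, so rather than asserting in Cases 1 and 2 that both directions behave uniformly, treat left multiplication ($0$ or $\alpha_x$) and right multiplication ($0$ or $\beta_x$) independently, as the paper does.
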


\begin{proof}
Fix an ideal subgraph $\Delta=(Y,F)$ and let $z\in I_Y$. So 
$z=\displaystyle\sum_{y\in Y} \lambda_y y$ such that $\displaystyle\sum_{y\in Y} \lambda_y=0$.

Now consider an element  $x\in X$. 
If $x\in Y$,  then
$$\begin{array}{ll}
xz
&=\lambda_x x+{\displaystyle\sum_{y\in Y,y\neq x}}\lambda_y (xy)\\
&=\lambda_x x+\frac{1}{2} \displaystyle\sum_{y\in Y, y\neq x}\lambda_y (x+y)\\
&=\lambda_x x+\frac{1}{2} {\displaystyle\sum_{y\in Y, y\neq x}\lambda_y x}+\frac{1}{2} \displaystyle\sum_{y\in Y, y\neq x}\lambda_y y\\
&=\frac{1}{2}\lambda_x x+\frac{1}{2} {\displaystyle\sum_{y\in Y, y\neq x}\lambda_y x}+\frac{1}{2}\lambda_x x+\frac{1}{2} \displaystyle\sum_{y\in Y, y\neq x}\lambda_y y\\
&=\frac{1}{2} ({\displaystyle\sum_{y\in Y}\lambda_y) x}+\frac{1}{2} \displaystyle\sum_{y\in Y}\lambda_y y\\
&=\frac{1}{2} \displaystyle\sum_{y\in Y}\lambda_y y=\frac{1}{2}z\in I_Y.\\
\end{array}$$

If $x\not \in Y$, then $xz=0$ or
$$\begin{array}{ll}
xz
&=\displaystyle\sum_{y\in Y} \lambda_y \ xy\\
&=\displaystyle\sum_{y\in Y}\alpha_{x} \lambda_y (x+y)\\
&=\alpha_x(\displaystyle\sum_{y\in Y}\lambda_y)x+ \alpha_x \displaystyle\sum_{y\in Y}\lambda_y y\\
&= \alpha_x  \displaystyle\sum_{y\in Y}\lambda_y y =\alpha_x z\in I_Y.
\end{array}$$

Hence any subspace of  $I_Y$ is a left ideal of $A_\Gamma$.
Similarly we find every subspace of $I_Y$ to be a right ideal of $A_\Gamma$.
Clearly $A/I_Y$ is isomorphic to $A_{\Gamma/Y}$.
\end{proof}

\begin{lemma}\label{ideal}
Suppose any two vertices of $\Gamma$ are adjacent and all edges of $\Gamma$ are labeled by $\frac{1}{2-|X|}$.
Then $\langle\displaystyle\sum_{x\in X} x\rangle$ is a two-sided ideal of $A$. 
\end{lemma}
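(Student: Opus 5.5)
Let $s=\sum_{x\in X} x$ and $n=|X|$; the hypothesis that all labels equal $\frac{1}{2-n}$ only makes sense when $n\neq 2$ in $\mathbb{F}$. The plan is a direct computation modelled on the proof of \cref{idealgraph}. By bilinearity, a one-dimensional subspace $\langle s\rangle$ is a two-sided ideal as soon as $xs\in\langle s\rangle$ and $sx\in\langle s\rangle$ for every basis vector $x\in X$. So I fix $x\in X$ and compute $xs$ and $sx$ explicitly, using that every pair of distinct vertices is adjacent in both directions and every label is $\alpha=\frac{1}{2-n}$.

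First, $xs = xx+\sum_{y\neq x} xy = x+\alpha\sum_{y\neq x}(x+y)$. The sum $\sum_{y\neq x}(x+y)$ has $n-1$ terms and equals $(n-1)x+(s-x)=(n-2)x+s$. Hence
$$xs = x+\alpha(n-2)x+\alpha s = \bigl(1+\alpha(n-2)\bigr)x+\alpha s.$$
With $\alpha=\frac{1}{2-n}$ we have $\alpha(n-2)=-1$, so the coefficient of $x$ vanishes and $xs=\alpha s\in\langle s\rangle$. The computation for $sx$ is the same with the factors reversed, since $yx=\alpha(y+x)$ as well; it also gives $sx=\alpha s$.

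There is no genuine obstacle. The only points needing care are the count of the $n-1$ neighbours of $x$ and the observation that $X$ must be finite for $s$ to be an element of $A$. Finiteness is implicit in the hypothesis, since $\frac{1}{2-|X|}$ is only defined when $|X|$ is a finite number with $2-|X|\neq 0$ in $\mathbb{F}$.
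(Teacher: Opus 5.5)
Your proof is correct and is essentially the paper's argument: compute $xs=sx=\alpha s$ for each $x\in X$, where the coefficient of $x$ is $1+(|X|-2)\alpha=0$, and then conclude by bilinearity since $X$ spans $A$. Your remark that $X$ must be finite is also implicit in the paper, which states the corresponding case of \cref{simple} for a finite clique.
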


\begin{proof}
Let $x\in X$ and $\alpha=\frac{1}{2-|X|}$, then 

$$\begin{array}{ll}
x (\displaystyle\sum_{y\in X} y)
&=x+\displaystyle\sum_{y\in X,y\neq x} xy\\
&=x+\displaystyle\sum_{y\in X,y\neq x}\alpha(x+y)\\
&=x+(|X|-1)\alpha x+\alpha\displaystyle\sum_{y\in X,y\neq x} y\\
&=(|X|-1)\alpha x+(1-\alpha)x +\alpha \displaystyle\sum_{y\in X} y\\
&=\alpha \displaystyle\sum_{y\in X} y.\\
\end{array}$$
Similarly we find 
$$(\displaystyle\sum_{y\in X} y)x=\alpha \displaystyle\sum_{y\in X} y.$$
As $X$ generates $A$, we find indeed that $\langle\displaystyle\sum_{x\in X} x\rangle$ is a two-sided ideal of $A$.
\end{proof}

\begin{theorem}\label{simple}
The algebra $A$ is simple, unless  one of the following holds: 
\begin{enumerate}
\item 
There exists an ideal subgraph $\Delta$ with vertex set $Y$ of $\Gamma$ and $I_Y$ is an ideal of $A_\Gamma$.
\item $\Gamma$ is a finite clique and all edges have the same label $\frac{1}{2-|X|}$,
and $\langle \displaystyle\sum_{x\in X} x\rangle$ is a $1$-dimensional ideal.
\end{enumerate}
\end{theorem}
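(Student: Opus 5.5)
Let $I$ be a nonzero proper two-sided ideal of $A$. We show that $I$ forces (i) or (ii). The engine is the support lemma: multiplying by basis vectors $x\in X$ on either side strictly shrinks or controls supports. Pick $0\neq a\in I$ with $|\mathrm{supp}(a)|$ minimal.

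\emph{Step 1: $I\cap X=\emptyset$.} If some $x\in X$ lies in $I$, then for any edge $(x,y)$ we get $xy=\alpha_{x,y}(x+y)\in I$, hence $y\in I$. Edges $(y,x)$ work the same way. Weak connectivity then puts all of $X$ in $I$, so $I=A$. Hence $|\mathrm{supp}(a)|\ge 2$ for every nonzero $a\in I$.

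\emph{Step 2: minimal elements.} Take $z\in\mathrm{supp}(a)$ and consider $za-\lambda a$ and $az-\mu a$ for suitable scalars. Since $za$ has support in $\{z\}\cup\{w\in \mathrm{supp}(a): z\sim w\}$, minimality gives strong constraints. Suppose some $w\in\mathrm{supp}(a)$ is not an out-neighbour of $z$. Then $za$ has strictly smaller support than $a$ after subtracting a multiple of $a$, unless $za\in\langle a\rangle$. Pushing this through, every minimal $a$ satisfies $za\in\mathbb{F}a$ and $az\in\mathbb{F}a$ for all $z\in X$. Thus $\langle a\rangle$ is itself a $1$-dimensional ideal.

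Comparing coefficients in $za=\lambda_z a$ for $z\in Y:=\mathrm{supp}(a)$ gives the rest. First, $z$ must be adjacent (both directions) to every other $y\in Y$, since coefficients of non-neighbours would vanish. Writing $a=\sum_{y\in Y}\lambda_y y$, we have
\[za=\lambda_z z+\sum_{y\in Y\setminus\{z\}}\lambda_y\alpha_{z,y}(z+y).\]
Matching the coefficients of $y\neq z$ forces $\alpha_{z,y}=\lambda_z$-independent constants $c_z$. The $z$-coefficient then gives the relation $\lambda_z+c_z\sum_{y\neq z}\lambda_y=c_z\lambda_z$. Running the same computation with right multiplication and over all $z$ shows that all labels inside $Y$ equal a common $c$. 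The relation becomes $(1-2c)\lambda_z=-c\,S$ with $S=\sum_{y\in Y}\lambda_y$, and there are two cases.

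\emph{Case $c=\tfrac12$.} Then $S=0$. For $x\notin Y$, $xa\in\mathbb{F}a$ has no $x$-component only when $x$ is adjacent to none or all of $Y$ with equal labels, since $\sum\lambda_y=0$ kills the $x$-term. Equal labels are forced because otherwise the $y$-coefficients would not be proportional to $\lambda_y$. So $Y$ spans an ideal subgraph and $I_Y$ is an ideal by \cref{idealgraph}, which is (i).

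\emph{Case $c\neq\tfrac12$.} All $\lambda_z$ are equal, so $a$ is a multiple of $\sum_{y\in Y}y$ and $S\neq0$. For $x\notin Y$ adjacent to some $y$, the product $xa$ has a nonzero $x$-coefficient, namely $\alpha\cdot|\{\text{neighbours}\}|$ times $\lambda$. Such a coefficient cannot occur in $\mathbb{F}a$. The delicate point is characteristic: the count could vanish mod $p$. I expect to argue through the $y$-coefficients, which are single labels and hence nonzero, forcing $x\in Y$. Weak connectivity then gives $Y=X$, finite. Finally $(1-2c)=-c|X|$ yields $c=\frac1{2-|X|}$, which is (ii) via \cref{ideal}.

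The main obstacle is Step 2: justifying rigorously that a support-minimal element spans a $1$-dimensional ideal. Left multiplication by $z$ need not reduce support when $z$ is adjacent to everything in $\mathrm{supp}(a)$. So the reduction has to combine $za$ with $a$ itself, using that $z$'s own coefficient scales predictably. I would do this carefully by taking $za-\lambda a$ with $\lambda$ chosen to cancel the coefficient of some $w\in Y\setminus\{z\}$ and invoking minimality.
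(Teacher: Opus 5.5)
\textbf{There is a genuine gap: the claim that $za\in\mathbb{F}a$ for vertices $z$ outside the support of $a$ is never proved, and both of your closing cases depend on it.}

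Your architecture matches the paper's. Your treatment of $z\in Y:=\mathrm{supp}(a)$ is sound: there $\mathrm{supp}(za)\subseteq Y$, so cancelling the coefficient of some $w$ leaves support inside $Y\setminus\{w\}$, which forces $za\in\mathbb{F}a$. Matching coefficients then gives the clique, the common label $c$, and the dichotomy of \cref{casesfora}.

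The problem is the assertion that $za\in\mathbb{F}a$ for \emph{all} $z\in X$. Take $x\notin Y$ with a neighbour in $Y$. Then $xa$ may have $x$ in its support, so $xa-\lambda a$ only has support inside $\{x\}\cup(Y\setminus\{w\})$, a set of size $|Y|$. Minimality then says only that this element is $0$ or \emph{another} minimal element of $I$. So the cancellation remedy in your last paragraph works only for $z\in Y$. Your arguments that such $x$ is adjacent to none or all of $Y$ with a constant label, and that $Y=X$ when $c\neq\frac12$, all presuppose the unproved fact $xa\in\mathbb{F}a$.

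The missing idea, which is the content of \cref{proper}, is to apply the structure of minimal elements to the new minimal elements $xa-\beta a$, whose supports share $|Y|-1$ vertices with $Y$.
\begin{itemize}
\item Since $xa\neq0$ and $\mathrm{supp}(xa)\subseteq\{x\}\cup(N(x)\cap Y)$, minimality shows $x$ is adjacent to at least $|Y|-1$ vertices of $Y$.
\item Suppose $x$ misses exactly one vertex $u$. Then $xa$ is minimal with support $\{x\}\cup(Y\setminus\{u\})$.
\begin{itemize}
\item If no label involved is $\frac12$, both $a$ and $xa$ are multiples of sums. Then $xa-\mu a$ is a nonzero multiple of $x-u$, and $x=x(x-u)\in I$.
\item If some label is $\frac12$ and $|Y|>2$, the shared edge makes all labels $\frac12$. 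Then $xa-\frac12a=-\frac12\lambda_u(x+u)$ has too small a support, forcing $\lambda_u=0$.
\item The case $|Y|=2$ is checked directly.
\end{itemize}
\item Suppose $x$ is adjacent to all of $Y$, and put $\beta=\alpha_{x,z}$. If $xa-\beta a\neq0$, it is minimal with support $\{x\}\cup(Y\setminus\{z\})$.
\begin{itemize}
\item So the labels from $x$ to $Y\setminus\{z\}$ are constant. They differ from $\beta$, since otherwise $xa-\beta a$ would be a nonzero multiple of $x$.
\item Then $xa-\alpha_{x,z'}a$ is nonzero with support $\{x,z\}$, so $|Y|=2$.
\item Now both edges out of $x$ lie on size-$2$ minimal supports, so both carry label $\frac12$, although their labels are distinct.
\end{itemize}
Hence $xa=\beta a$.
\item The same argument applies to $ax$.
\end{itemize}
Only after this do your two cases go through.

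Your characteristic worry for $c\neq\frac12$ is settled through $c$ itself, not through the $y$-coefficients. If $x\notin Y$ is joined to all of $Y$ with label $\mu$ and $a=\lambda\sum_{y\in Y}y$, then $xa-\mu a=\lambda\mu|Y|\,x\in I$. This forces $|Y|=0$ in $\mathbb{F}$, hence $c=\frac{1}{2-|Y|}=\frac12$, a contradiction. Note also that without first knowing the labels from $x$ are constant, your $x$-coefficient $\lambda\sum_{y}\alpha_{x,y}$ can vanish for reasons unrelated to the characteristic.
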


We prove this theorem in the following lemmas.
We fix our notation.
Let $I$ be a nontrivial two-sided ideal of $A$, i.e. $I\neq \{0\}$.

\begin{lemma}
If $X\cap I\neq \emptyset$, then $I=A$.
\end{lemma}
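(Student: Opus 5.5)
The idea is to show that the set $Y:=X\cap I$ is closed under passing to neighbours in the underlying graph $\underline{\Gamma}$. Since $\Gamma$ is weakly connected, this gives $X\subseteq I$. As $X$ is a basis of $A$, it then follows that $I=A$.

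Fix $x\in X\cap I$ and let $y$ be a vertex adjacent to $x$ in $\underline\Gamma$. So $(x,y)\in E$ or $(y,x)\in E$. We treat the two cases separately.

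\textbf{Case $(x,y)\in E$.} Since $I$ is a right ideal, $xy=\alpha_{x,y}(x+y)\in I$. The label satisfies $\alpha_{x,y}\neq 0$, so $x+y\in I$. Subtracting $x\in I$ gives $y\in I$.

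\textbf{Case $(y,x)\in E$.} Here we use that $I$ is a left ideal, so $yx=\alpha_{y,x}(y+x)\in I$. As before, $y+x\in I$, and hence $y\in I$.

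In both cases only membership of products in $I$ is used, so two-sidedness of $I$ suffices. The case where both edges exist is covered by either argument.

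We conclude by induction on the distance in $\underline\Gamma$ from $x$. Every vertex of $X$ is joined to $x$ by a finite path in $\underline\Gamma$, by weak connectivity. Applying the step above along the path shows that every vertex lies in $I$. Hence $\mathbb{F}X\subseteq I$, that is, $I=A$.

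No step is a real obstacle. The only points to watch are that the labels are nonzero, and that for an edge directed towards $x$ we need the left-ideal property rather than the right one.
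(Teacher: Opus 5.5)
Your proof is correct and is essentially the paper's argument. From $x\in X\cap I$ you use $xy\in I$ (right ideal) or $yx\in I$ (left ideal), together with the nonzero label, to put each neighbour $y$ into $I$, and then weak connectivity gives all of $X$. The paper does the same, only writing it directly as $xy-\alpha_{x,y}x=\alpha_{x,y}y\in I$ rather than dividing by the label first.
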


\begin{proof}
Assume that there is an element $x\in X\cap I$.
Then,  we find  $xy-\alpha_{x,y} x=\alpha_{x,y} y\in I$ for $y\in X$ with $(x,y)\in E$ or 
$yx-\alpha_{y,x} x=\alpha_{y,x} y\in I$ for $y\in X$ with $(y,x)\in E$. 
Using weak connectivity of $\Gamma$, we find $X\subseteq I$ and $I=A$.
\end{proof}

Now assume $X\cap I=\emptyset$. 
Let $a=\displaystyle\sum_{y\in X}\lambda_y y\in I$ be an element with $|\mathrm{supp}(a)|$ minimal among all elements in $I\setminus \{0\}$.
Notice that $|\mathrm{supp}(a)|>1$.

Then for $x\in X$ we find for all $\beta\in \mathbb{F}$ that $xa-\beta a$ as well as $ax-\beta a$ are in $I$.
We will use this for particular choices of $x$ and  $\beta$.

\begin{lemma}\label{casesfora}
One of the following holds.
\begin{enumerate}
\item 
There is a $\lambda\in \mathbb{F}^*$ with $a=\lambda\displaystyle \sum_{y\in \mathrm{supp}(a)} y$.
For any two vertices $x,y$ of $\mathrm{supp}(a)$ we find $(x,y)\in E$ with label $\frac{1}{2-|\mathrm{supp}(a)|}\neq \frac{1}{2}$ and $|\mathrm{supp}(a)|>2$.
\item

For any two vertices $x,y$ of $\mathrm{supp}(a)$ we find $(x,y)\in E$ with label $\frac{1}{2}$.
Moreover, $\displaystyle\sum_{y\in \mathrm{supp}(a)} \lambda_y=0$ in $\mathbb{F}$.
\end{enumerate}
\end{lemma}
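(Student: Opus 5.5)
The plan is to exploit the minimality of $|\mathrm{supp}(a)|$ systematically. Write $S=\mathrm{supp}(a)$, $m=|S|\geq 2$ and $s=\sum_{y\in S}\lambda_y$. For $x\in S$ and $\beta\in\mathbb{F}$, both $xa-\beta a$ and $ax-\beta a$ lie in $I$ and have support inside $S$, since $x\in S$. So whenever $\beta$ kills at least one coordinate of $S$, such an element must be $0$. Explicitly,
$$xa=\Bigl(\lambda_x+\sum_{y\in S,\,x\sim y}\alpha_{x,y}\lambda_y\Bigr)x+\sum_{y\in S,\,x\sim y}\alpha_{x,y}\lambda_y\,y,$$
and the analogous formula holds for $ax$.

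\textbf{Step 1: $S$ is a complete subgraph.} Suppose $x,y\in S$ are distinct with $(x,y)\notin E$. Then $xa$ has zero coefficient at $y$, so $\mathrm{supp}(xa)\subsetneq S$, and minimality forces $xa=0$. But $x$ is not killed: if $x$ has no out-neighbour in $S$, then $xa=\lambda_x x\neq 0$ and $x\in I$, contradicting $X\cap I=\emptyset$. If it has an out-neighbour $z\in S$, then $xa$ has the nonzero coefficient $\alpha_{x,z}\lambda_z$ at $z$. Hence $(x,y)\in E$ for all distinct $x,y\in S$.

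\textbf{Step 2: the labels and the coefficient equations.} Fix distinct $x,y\in S$ and put $\beta=\alpha_{x,y}$. The element $xa-\alpha_{x,y}a$ vanishes at $y$, so it is $0$. Comparing coefficients at the other $z\in S\setminus\{x\}$ gives $\alpha_{x,z}=\alpha_{x,y}=:\alpha_x$, because $\lambda_z\neq0$. Comparing the coefficient at $x$ gives
$$\lambda_x+\alpha_x(s-\lambda_x)=\alpha_x\lambda_x,\quad\text{i.e.}\quad \lambda_x(1-2\alpha_x)+\alpha_x s=0 .$$
Right multiplication gives, symmetrically, that $\alpha_{z,x}=:\beta_x$ is independent of $z$, together with $\lambda_x(1-2\beta_x)+\beta_x s=0$. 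If $m\geq3$, then for distinct $x,y$ we get $\alpha_x=\alpha_{x,y}=\beta_y$, and varying the third vertex shows that all labels on $S$ equal a common $\alpha$.

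\textbf{Step 3: the case split.} If $s=0$, the equation gives $\lambda_x(1-2\alpha_x)=0$, so every label is $\tfrac12$; this is case (ii). If $s\neq 0$ and $m\geq 3$, then $\alpha\neq\tfrac12$, since otherwise $\alpha s=0$. Hence $\lambda_x=\alpha s/(2\alpha-1)$ is independent of $x$, so $a=\lambda\sum_{y\in S}y$ and $s=m\lambda$. Substituting gives $1-2\alpha+m\alpha=0$, that is $\alpha=\frac{1}{2-m}$, which requires $m\neq2$. Since $m\geq3$ and $\alpha\neq\tfrac12$, this is case (i).

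The main obstacle is the case $m=2$, where the labels $\alpha=\alpha_{x,y}$ and $\alpha'=\alpha_{y,x}$ need not be tied together a priori. Here I would use all four equations (left and right multiplication by $x$ and by $y$) directly. With $d=\lambda_x-\lambda_y$, these read $\lambda_x=\alpha d$ and $\lambda_y=-\alpha' d$ from the left, and $\lambda_x=\alpha' d$ from the right. Note that $d\neq0$, since otherwise all $\lambda$ vanish. Comparing gives $\alpha=\alpha'$ and $\alpha+\alpha'=1$, so both labels equal $\tfrac12$ and $s=\alpha d-\alpha d=0$. This lands in case (ii), and in particular case (i) never occurs with $|S|=2$, consistent with the claim $|\mathrm{supp}(a)|>2$ there.
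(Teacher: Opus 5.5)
Your proof is correct and follows the paper's argument. Both use minimality of the support with $\beta=0$ to show $\mathrm{supp}(a)$ is complete, then $\beta=\alpha_{x,z}$ (from both sides) to get constant out- and in-labels together with $\lambda_x(1-2\alpha)+\alpha s=0$. The only difference is the final case split. The paper equates the left and right expressions for $\lambda_x$ and uses injectivity of $t\mapsto t/(2t-1)$ to get $\alpha_{x,z}=\alpha_{z,x}$, uniformly in $|\mathrm{supp}(a)|$; you instead split on $s$, equalize all labels via $\alpha_x=\alpha_{x,y}=\beta_y$ when $|S|\ge 3$, and solve the four linear equations directly when $|S|=2$.
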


\begin{proof}
Let $x\in \mathrm{supp}(a)$. Then we find
$$\begin{array}{ll}
xa-\beta a
&=\displaystyle\sum_{y\in \mathrm{supp}(a)} \lambda_y (xy-\beta y)\\
&=\lambda_x(1-\beta) x+\displaystyle\sum_{y\in \mathrm{supp}(a),x\sim y} \lambda_y (xy-\beta y)-\displaystyle\sum_{y\in \mathrm{supp}(a),x\not\sim y} (\beta\lambda_y y)\\
&=\lambda_x(1-\beta) x+\displaystyle\sum_{y\in \mathrm{supp}(a),x\sim y} \lambda_y (\alpha_{x,y} x +(\alpha_{x,y}-\beta) y)-\beta\displaystyle\sum_{y\in \mathrm{supp}(a),x\not\sim y} \lambda_y y.\\
\end{array}$$
This element is an element of $I$.

If $x$ has no neighbors in $\mathrm{supp}(a)$, then $xa=\lambda_x x\in I$, which is against our assumption that $X\cap I=\emptyset$.
So, $x$ does have neighbors in $\mathrm{supp}(a)$.
But then taking $\beta=0$, we find that $\mathrm{supp}(xa)$ contains the neighbors of $x$ in $\mathrm{supp}(a)$ and possibly $x$. 
As $|\mathrm{supp}(xa)|\geq |\mathrm{supp}(a)|>1$, by minimality of $|\mathrm{supp}(a)|$,
we find that all $y\in \mathrm{supp}(a)$ are either equal to $x$ or adjacent to $x$.
Varying $x$, we find that inside  $\mathrm{supp}(a)$ any two vertices are adjacent.

Moreover, when taking $\beta=\alpha_{x,z}\in \mathcal{F}$ for some $z\in \mathrm{supp}(a)$ different from $x$, we find 
that the support of
$$xa-\alpha_{x,z} x=\lambda_x(1-\alpha_{x,z}) x+\displaystyle\sum_{y\in \mathrm{supp}(a),x\sim y} \lambda_y (\alpha_{x,y} x +(\alpha_{x,y}-\alpha_{x,z}) y)$$
is contained in $\mathrm{supp}(a)\setminus \{z\}$. So, by minimality of $|\mathrm{supp}(a)|$ we find
$$\lambda_x(1-\alpha_{x,z}) x+\displaystyle\sum_{y\in \mathrm{supp}(a),y\sim x} \lambda_y (\alpha_{x,y} x +(\alpha_{x,y}-\alpha_{x,z}) y)=0,$$
implying that $\alpha_{x,y}=\alpha_{x,z}$ for all $y\in \mathrm{supp}(a)$ different from $x$ and 
$$\lambda_x(1-2\alpha_{x,z})+ \alpha_{x,z}\displaystyle\sum_{y\in \mathrm{supp}(a)} \lambda_y=0.$$

Considering
$$ax-\alpha_{z,x} x=\lambda_x(1-\alpha_{z,x}) x+\displaystyle\sum_{y\in \mathrm{supp}(a),x\sim y} \lambda_y (\alpha_{y,x} x +(\alpha_{y,x}-\alpha_{z,x}) y)$$
we also find $\alpha_{y,x}=\alpha_{z,x}$ for all $y\in \mathrm{supp}(a)$ different from $x$ and 
$$\lambda_x(1-2\alpha_{z,x})+ \alpha_{z,x}\displaystyle\sum_{y\in \mathrm{supp}(a)} \lambda_y=0.$$

In particular, if $\alpha_{x,z}\neq \frac{1}{2}$ and $\alpha_{z,x}\neq \frac{1}{2}$, then 
$$\lambda_x=\frac{\alpha_{x,z}}{2\alpha_{x,z}-1}\cdot\displaystyle\sum_{y\in \mathrm{supp}(a)} \lambda_y=\frac{\alpha_{z,x}}{2\alpha_{z,x}-1}\cdot\displaystyle\sum_{y\in \mathrm{supp}(a)} \lambda_y$$
and, as $\lambda_x\neq 0$, we find $$\frac{\alpha_{x,z}}{2\alpha_{x,z}-1}=\frac{\alpha_{z,x}}{2\alpha_{z,x}-1},$$
from which we deduce $\alpha_{x,z}=\alpha_{z,x}$.
Hence, in this case we can conclude that all edges of $\mathrm{supp}(a)$ have the same label 
$\alpha$,  
and that $\lambda_y=\lambda_x$ for all $y\in \mathrm{supp}(a)$. 
So, minimality of $|\mathrm{supp}(a)|$ implies $\lambda_x(1-2\alpha)+\alpha|\mathrm{supp}(a)|\lambda_x=0$, from which we deduce $\alpha=\frac{1}{2-|\mathrm{supp}(a)|}$. 

Notice that this also implies that  $|\mathrm{supp}(a)|>2$.

And if $\alpha_{x,z}=\frac{1}{2}$ or $\alpha_{z,x}=\frac{1}{2}$ , then $\displaystyle\sum_{y\in \mathrm{supp}(a)}\lambda_y=0$ and $\alpha_{x,z}=\alpha_{z,x}=\frac{1}{2}$.
So, in both cases we find that in $\mathrm{supp}(a)$ any two vertices are adjacent and all edges have the same label.
\end{proof}

\begin{lemma}\label{proper}
Suppose $\mathrm{supp}(a)$ is a proper subset of $X$.
Then the induced subgraph on $\mathrm{supp}(a)$ is an ideal subgraph.
\end{lemma}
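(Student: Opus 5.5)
The plan is to combine \cref{casesfora} with a second application of the minimality of $|\mathrm{supp}(a)|$, this time using vertices outside $Y:=\mathrm{supp}(a)$. By \cref{casesfora} we already know three things: $|Y|\ge 2$, any two vertices of $Y$ are adjacent, and all edges inside $Y$ carry one common label $\alpha$. Here either $\alpha=\frac12$ and $\sum_y\lambda_y=0$, or $\alpha=\frac{1}{2-|Y|}\neq\frac12$ and $a=\lambda\sum_{y\in Y}y$. So what remains is to control vertices $x\in X\setminus Y$ and then to exclude the second case.

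\textbf{Step 1 (outside vertices see all or nothing).} Fix $x\in X\setminus Y$. Let $N$ be the set of $y\in Y$ with $(x,y)\in E$, and suppose $N\neq\emptyset$; pick $y_0\in N$. Consider the element
\[ b=xa-\alpha_{x,y_0}a\in I. \]
Its coefficient at $y_0$ vanishes, so $\mathrm{supp}(b)\subseteq\{x\}\cup (Y\setminus\{y_0\})$, and hence $|\mathrm{supp}(b)|\le |Y|$. The coefficient of $x$ in $b$ is $\sum_{y\in N}\lambda_y\alpha_{x,y}$.

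We first treat the case $b\neq 0$. By minimality, $\mathrm{supp}(b)$ is exactly $\{x\}\cup(Y\setminus\{y_0\})$, and \cref{casesfora} applies to $b$. This forces $x$ to be adjacent to every $y\in Y\setminus\{y_0\}$, so $N=Y$, and it forces all labels $\alpha_{x,y}$ with $y\neq y_0$ to be equal. Because $y_0\in N$ was arbitrary, swapping the roles of $y_0$ and another vertex of $N$ shows that all $\alpha_{x,y}$, $y\in Y$, are equal to a common value $\alpha_x$. (If $|Y|=2$ this step needs a small separate check, done the same way with both choices of $y_0$.)

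We now show that $b\neq 0$ is in fact impossible, and simultaneously handle $b=0$. Once $N=Y$ and the labels are constant, $b=\alpha_x\big(\sum_{y\in Y}\lambda_y\big)x$. This is either $0$ or a nonzero multiple of $x$ lying in $I$, and the latter is excluded since $X\cap I=\emptyset$. If instead $b=0$ from the start, comparing coefficients shows that every $y\notin N$ has coefficient $-\alpha_{x,y_0}\lambda_y\neq 0$, so $N=Y$. Then all coefficients $(\alpha_{x,y}-\alpha_{x,y_0})\lambda_y$ must vanish, so the labels are again constant.

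The same argument with $ax-\alpha_{y_0,x}a$ handles edges $(y,x)$ and yields a constant label $\beta_x$. I expect this step to be the main obstacle, because the case distinctions on whether $b$ vanishes and on small $|Y|$ must be organised so that minimality always bites.

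\textbf{Step 2 (excluding case (i) of \cref{casesfora}).} Since $Y\ne X$ and $\Gamma$ is weakly connected, some $x\notin Y$ has an edge to or from $Y$. By Step 1, $x$ is then joined to all of $Y$ with a constant label, say $\alpha_x$ for the edges $(x,y)$. If $a=\lambda\sum_{y\in Y}y$, then
\[ xa-\alpha_x a=\lambda\alpha_x|Y|\,x. \]
This element lies in $I$, so it must be $0$, which gives $|Y|=0$ in $\mathbb{F}$. But then $\alpha=\frac1{2-|Y|}=\frac12$, contradicting case (i). Hence case (ii) holds: all labels in $Y$ equal $\frac12$ and $\sum_{y}\lambda_y=0$.

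Together with Step 1, this verifies every clause of the definition of an ideal subgraph for the induced subgraph on $Y$.
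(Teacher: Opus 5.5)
Your proof is correct, and it is organised differently from the paper's, more economically. The paper first takes $\beta=0$. Since $xa\neq 0$ is supported in $\{x\}\cup N$, minimality shows that $x$ is adjacent to all but at most one vertex of $Y$. It must then exclude a single non-neighbour $u$ by a separate three-way analysis:
\begin{enumerate}
\item[(1)] labels not $\frac12$: $xa-\lambda a=\lambda(x-u)$ and $x(x-u)=x\in I$;
\item[(2)] labels $\frac12$ and $|Y|>2$: $xa-\frac12 a=0$ forces $\lambda_u=0$;
\item[(3)] $|Y|=2$: $x(x+z-a)=x\in I$.
\end{enumerate}
Only then does it switch to $\beta=\alpha_{x,z}$ and split on $xa-\beta a=0$ versus $\neq 0$.

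By taking $\beta=\alpha_{x,y_0}$ from the start, you make every non-neighbour $u$ of $x$ in $Y$ appear in $b$ with coefficient $-\alpha_{x,y_0}\lambda_u\neq 0$. So $N=Y$ is immediate if $b=0$, and follows from \cref{casesfora} applied to $b$ if $b\neq 0$; the paper's whole intermediate stage disappears. After that the two proofs agree. Your $b\neq 0$ branch is the paper's $xa-\beta a\neq 0$ branch, and your Step~2 is the paper's remark that $\lambda|\mathrm{supp}(a)|=0$ in $\mathbb{F}$ would make the label $\frac{1}{2-|Y|}$ equal to $\frac12$. The paper's route first settles which vertices of $Y$ the vertex $x$ sees, using only a support count, and treats labels afterwards. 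Yours settles both at once with a single element of $I$ and no extra case analysis.

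The $b\neq 0$ branch, including the $|Y|=2$ check you left implicit, closes quickly. Let $\gamma$ be the common label from \cref{casesfora}. The coefficient of $y\neq y_0$ in $b$ is $\lambda_y(\gamma-\alpha_{x,y_0})$, which is nonzero, so $\gamma\neq\alpha_{x,y_0}$. Hence $xa-\gamma a$ is a nonzero element of $I$ supported in $\{x,y_0\}$, and minimality forces $|Y|=2$. An element of $I$ of minimal support of size $2$ falls under case (ii) of \cref{casesfora}, so both $\alpha_{x,y_0}$ and $\alpha_{x,y_1}=\gamma$ equal $\frac12$, a contradiction. That fact about size-$2$ supports is the ingredient your ``same way'' check needs.

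Also, $\Gamma$ is not assumed symmetric in this section, so the vertex in Step~2 may only have edges $(y,x)$ into $Y$. In that case use $ax-\beta_x a=\lambda\beta_x|Y|x$ instead.
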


\begin{proof}
Assume that $\mathrm{supp}(a)$ is a proper subset of $X$.
Then consider an element $x\in X$ which is not in $\mathrm{supp}(a)$, but adjacent to at least one vertex of $\mathrm{supp}(a)$, say $z$.
Then for $\beta\in \mathbb{F}$ we find
$$\begin{array}{ll}
xa-\beta a
&=\displaystyle\sum_{y\in \mathrm{supp}(a)} \lambda_y (xy-\beta y)\\
&=\displaystyle\sum_{y\in \mathrm{supp}(a),x\sim y}  \lambda_y(xy-\beta y)-\beta \sum_{y\in \mathrm{supp}(a),x\not\sim y} \lambda_y y\\
&=\displaystyle\sum_{y\in \mathrm{supp}(a),x\sim y} \lambda_y(\alpha_{x,y} x +(\alpha_{x,y}-\beta) y)-\beta \sum_{y\in \mathrm{supp}(a),x\not\sim y} \lambda_y y.\\

\end{array}$$

Setting  $\beta=0$, we find this product to be non-zero, and 
minimality of $|\mathrm{supp}(a)|$ implies that $x$ is adjacent to at least all but one vertices of $\mathrm{supp}(a)$.

Now assume that  $x$ is adjacent to all but one of the vertices of $\mathrm{supp}(a)$. Say $u\in \mathrm{supp}(a)$ is the unique vertex not adjacent to $x$.
Then, as $xa\neq 0$,  the support of $xa$ is of the same size  as the support of $a$.
In particular,  \cref{casesfora} also applies to $\mathrm{supp}(xa)$, so we find $\alpha_{x,y}=\alpha_{x,z}$ for all $y\in \mathrm{supp}(a)$ different from $u$. 
If both $\alpha_{x,z}$ and $\alpha_{z,u}$ are different from $\frac{1}{2}$, then, by \cref{casesfora} applied to both  $a$ and to $xa$, we can assume
that (up to a scalar) $a= \displaystyle\sum_{y\in \mathrm{supp}(a)} y$ and $xa=\lambda\displaystyle\sum_{y\in \mathrm{supp}(xa)} y$ for some nonzero $\lambda\in\mathbb{F}$.
But then $xa-\lambda a=\lambda (x-u)$ and $x(x-u)=x\in I$, which is against our assumption.
So, at least one of $\alpha_{x,z}$ or $\alpha_{z,u}$ equals $\frac{1}{2}$.

If $|\mathrm{supp}(a)|>2$, then  $\mathrm{supp}(a)$ and $\mathrm{supp}(xa)$ intersect in at least two vertices implying that all
edges inside their union have the same label $\frac{1}{2}$.
We find  $xa-\frac{1}{2} a\in I$ has support of size less than $|\mathrm{supp}(a)|$ and hence  is $0$.
But then   
$$\displaystyle\sum_{y\in \mathrm{supp}(a),y\sim x} \lambda_y =\displaystyle\sum_{y\in \mathrm{supp}(a),y\neq u} \lambda_y=0.$$

Above we found that also $\displaystyle\sum_{y\in \mathrm{supp}(a)} \lambda_y=0$
and hence $\lambda_u=0$, which is impossible.

In the case where $|\mathrm{supp}(a)|=2$ we can, without loss of generality, assume $a=z+\lambda_u u$.
But then $xa= \alpha_{x,z}(x+z)$, and $x+z\in I$.
So also $x(x+z-a)= x(x-\lambda_u u)=x\in I$, which is against our assumptions.

We conclude that $x$ is always adjacent to none or all vertices in $\mathrm{supp}(a)$.

So, assume that $x\in X$ is not in $\mathrm{supp}(a)$, but $x$ is adjacent to some fixed $z\in\mathrm{supp}(a)$
and hence to all $y\in \mathrm{supp}(a)$.
Setting  $\beta=\alpha_{x,z}$,  in the equation
$$ xa-\beta a=\displaystyle\sum_{y\in \mathrm{supp}(a),x\sim y} \lambda_y(\alpha_{x,y} x +(\alpha_{x,y}-\beta) y),$$ 
we find $xa-\beta a\in I$ to have a support of size at most 
$|\mathrm{supp}(a)|$.

If $xa-\beta a=0$, then $\alpha_{x,y}=\beta=\alpha_{x,z}$ for all $y\in \mathrm{supp}(a)$
and $$0=xa-\beta a=(\displaystyle\sum_{y\in \mathrm{supp}(a)}\lambda_y) x\in I.$$
Hence $\displaystyle\sum_{y\in \mathrm{supp}(a)}\lambda_y=0$.
If, moreover, $\alpha_{z,z'}\neq \frac{1}{2}$ for some $z,z'\in \mathrm{supp}(a)$, then, by \cref{casesfora} we have $a_{z,z'}=\frac{1}{2-|\mathrm{supp}(a)|}$ and 
there is a $\lambda\in \mathbb{F}^*$ with $a=\lambda \displaystyle\sum_{y\in \mathrm{supp}(a)} y$.
So, $\lambda |\mathrm{supp}(a)|=0$ in $\mathbb{F}$, contradicting $\alpha_{z,z'}\neq 0, \frac{1}{2}$.
Hence $\alpha_{y,y'}=\frac{1}{2}$ for all $y,y'\in \mathrm{supp}(a)$ and we find $\mathrm{supp}(a)$ to be an ideal subgraph of $\Gamma$.

If $xa-\beta a\neq 0$, then $xa-\beta a$ is also an element of $I$ with minimal support equal to $\{x\}\cup \mathrm{supp}(a)\setminus \{z\}$.
But then, by \cref{casesfora}, we have  $\alpha_{x,y}=\alpha_{x,y'}$ for all $y,y'\in \mathrm{supp}(a)$ different from $z$. 
So, for fixed $z'$ in $\mathrm{supp}(a)$ different from $z$ we find that $\alpha_{x,z'}\neq \alpha_{x,z}$ and   $xa-\alpha_{x,z'}a\neq 0$ has support contained in $\{x,z\}$.
As $xa-\alpha_{x,z'}a\neq 0$, we find that $\mathrm{supp}(xa-\alpha_{x,z'}a)=\{x,z\}$ has size $2$. But then $\mathrm{supp}(a)=\{z,z'\}$ and 
$\mathrm{supp}(xa-\beta a)=\{x,z\}$.

As $xa-\beta a$ as well as $xa-\alpha_{x,z'}a$ are in $I$ and have support of size $2$, we find, by \cref{casesfora}, that the edges $(x,z)$ and $(x,z')$ both have label $\frac{1}{2}$, contradicting that they have different label.

If there is no $x\in X$  which is not in $\mathrm{supp}(a)$ but  adjacent to an element from $\mathrm{supp}(x)$, then, by the weakly connectedness
of $\Gamma$ there is an $x\in X$ but not in $\mathrm{supp}(a)$ for which there is a $z\in \mathrm{supp}(a)$ adjacent to $x$.
Considering products of the form $ax-\beta a$ we obtain the same conclusions as above.
\end{proof}

\cref{simple} now  follows.
Indeed, if $\mathrm{supp}(a)=X$, then \cref{casesfora} clearly implies that either we are in case (ii) of the conclusion of the theorem and $\langle \sum_{x\in X}x\rangle$ is an ideal as follows from \cref{ideal},
or we are in case (i) and $X$ is an ideal subgraph and $I_X$ is an ideal of $A$, as in \cref{idealgraph}.
If $\mathrm{supp}(a)\neq X$, then \cref{proper} implies that $\mathrm{supp}(a)$ is an ideal subgraph and, 
by \cref{idealgraph} we find $I_{\mathrm{supp}(a)}$ to be a proper ideal of $A$.

For later use, we mention the following obvious corollary to \cref{simple}:

\begin{corollary}\label{simplecor}
If $\Gamma$ is weakly connected and has  at least three vertices, but no complete subgraph on three vertices,
then $A$ is simple.
\end{corollary}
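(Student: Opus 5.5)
The plan is to deduce \cref{simplecor} directly from \cref{simple}. By that theorem, if $A$ is not simple then one of two things happens: $\Gamma$ has an ideal subgraph, or $\Gamma$ is a finite clique all of whose labels equal $\frac{1}{2-|X|}$. I will rule out each alternative using only the two hypotheses: $|X|\geq 3$, and $\underline{\Gamma}$ contains no complete subgraph on three vertices.

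\textbf{The clique case.} A clique on $X$ with $|X|\geq 3$ already contains a complete subgraph on three vertices. This contradicts the hypothesis, so this case cannot occur.

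\textbf{The ideal subgraph case.} Suppose $\Delta=(Y,F)$ is an ideal subgraph. By definition $|Y|\geq 2$ and any two vertices of $Y$ are adjacent.
\begin{enumerate}
\item If $|Y|\geq 3$, then $Y$ contains a triangle, which is excluded.
\item If $|Y|=2$, say $Y=\{y_1,y_2\}$, then $Y\neq X$ because $|X|\geq 3$.
\item In that case weak connectivity gives a vertex $x\in X\setminus Y$ joined by an edge, in some direction, to some vertex of $Y$.
\item The defining property of an ideal subgraph then forces $x$ to be adjacent to every vertex of $Y$, and every vertex of $Y$ to be adjacent to $x$.
\item Hence $\{x,y_1,y_2\}$ spans a complete subgraph on three vertices of $\underline{\Gamma}$, again a contradiction.
\end{enumerate}

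The only delicate point is step (iv). The definition of an ideal subgraph requires $x$ to be adjacent to none or to all of $Y$, so I must check that an edge in either direction, $(x,y)$ or $(y,x)$, triggers the ``all'' alternative. The definition does this directly, and the proof of \cref{proper} also handles the reversed direction. Since only the underlying graph matters for the triangle, either direction of adjacency suffices. With both alternatives excluded, \cref{simple} gives that $A$ is simple.
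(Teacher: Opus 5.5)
Your proposal is correct and is exactly the argument the paper intends: the paper merely calls this an obvious corollary of \cref{simple}, and you spell out why each exceptional case of that theorem forces a triangle in $\underline{\Gamma}$ when $|X|\geq 3$. One small precision in step (iv): depending on the direction of the given edge, the definition of an ideal subgraph forces either all edges $(x,y)$ with $y\in Y$ or all edges $(y,x)$ with $y\in Y$, but not necessarily both; as you observe, either one already yields the triangle in $\underline{\Gamma}$, and reading ``complete subgraph'' in $\underline{\Gamma}$ (as you do) is the interpretation under which the corollary holds for non-symmetric $\Gamma$.
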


\begin{remark}\label{allen}
Suppose $\Gamma=(X,E)$ is a finite complete graph in which each edge is labeled $\alpha=\frac{1}{2-|X|}$, where $|X|\neq 2$ in $\mathbb{F}$,
and $A=A_\Gamma$.
Then the algebra $A/I$ where $I=\langle\displaystyle \sum_{x\in X} x\rangle$
is generated by elements $\overline{x}=(n-1)x+I$ with $x\in X$ and $n=\mathrm{dim}(A/I)=|X|-1$.

If $a\neq b\in X$, then 
$$\begin{array}{ll}
\overline{a}^2&= (n-1)\overline{a},\ \mathrm{and}\\
\overline{a}\overline{b}&=-(\overline{a}+\overline{b}).
\end{array}$$

The algebras $A/I$ have been studied by Harada \cite{harada} and Allen \cite{allen1,allen2} in connection with $2$-transitive groups. 
If the characteristic $p$ of $\mathbb{F}$ does not divide $|X|$, then Allen proves the automorphism group of $A_\Gamma/I$ to be isomorphic 
to the symmetric group $\mathrm{Sym}_{n+1}$, which is also isomorphic to the automorphism group of the graph $\Gamma$.

If  the characteristic $p$ of $\mathbb{F}$ divides $|X|$, then $\frac{1}{2-|X|}=\frac{1}{2}$ and the algebra $A$ not only has a $1$-dimensional ideal,
but also $X$ is an ideal subgraph and $I_X$ is an ideal of codimension $1$.
Allen \cite{allen1,allen2} shows the algebra $A/I$ to be a special Jordan algebra (and   a Bernstein algebra)
with automorphism group isomorphic to a split extension $\mathbb{F}^n:\mathrm{GL}_n(\mathbb{F})$.

This does show that the automorphism group of the algebra $A=A_\Gamma$ can really be bigger than that of the graph $\Gamma$.
\end{remark}


\section{Idempotents and fusion rules for algebras from graphs}
\label{section:axial}

As before, let $\mathcal{F}$ be a subset of the field $\mathbb{F}$ containing  $1$ and at least one other element 
and $\Gamma=(X,E)$  a weakly connected directed graph, with vertex set $X$ and edge set $E$ and all edges  labeled
with non-zero elements from the set $\mathcal{F}$ and consider the algebra  $A:=A_{\Gamma}$.

\begin{proposition}\label{axialprop}
Let $x$ be an element in $X$. Suppose the labels on  the edges with tail  $x$ are the elements of the subset $\mathcal{F}_{L_a}$
of $\mathbb{F}$  and those with head $x$ are the elements of the subset $\mathcal{F}_{R_x}$. 
If $1\not \in \mathcal{F}_{L_a}\cup \mathcal{F}_{R_a}$, then the algebra $A$ is the direct sum 
$$A=A_1(L_x)\oplus A_0(L_x)\oplus \bigoplus_{\alpha\in \mathcal{F}_{L_x}} A_{\alpha}(L_x)$$ of the following   $L_x$-eigenspaces (provided they are not $\{0\}$)
$$
\begin{array}{ll}
A_{1}(L_x)&=\langle x\rangle,\\
A_{\alpha}(L_x)&=\langle \alpha x+ (\alpha-1)y\mid x\neq y\ \mathit{adjacent}\ \mathit{and}\ \alpha_{x,y}=\alpha\rangle, \mathit{with}\ \alpha\in \mathcal{F}_{L_x}\  \mathit{and}\\
A_{0}(L_x)&=\langle y\mid x\neq y\ \mathit{nonadjacent}\rangle,\\

\end{array}$$
and $$A=A_1(R_x)\oplus A_0(R_x)\oplus \bigoplus_{\alpha\in \mathcal{F}_{R_x}} A_{\alpha}(R_x)$$ of the following   $R_x$-eigenspaces (provided they are not $\{0\}$)
$$
\begin{array}{ll}
A_{1}(R_x)&=\langle x\rangle,\\
A_{\alpha}(R_x)&=\langle \alpha x+ (\alpha-1)y\mid y\neq x\ \mathit{adjacent}\ \mathit{and}\ \alpha_{y,x}=\alpha\rangle, \ \mathit{with}\ \alpha\in \mathcal{F}_{R_x}\ \mathit{and}\\
A_{0}(R_x)&=\langle y\mid y\neq x\ \mathit{nonadjacent}\rangle.\\

\end{array}$$

In particular, if all labels of the graph $\Gamma$ are different from $1$ then  $(A,X)$ is a primitive axial algebra. 
\end{proposition}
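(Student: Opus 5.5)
We compute $L_x$ directly on the basis $X$ and then change basis. By the definition of the product, $L_x(x)=x$. If $y\neq x$ and $(x,y)\notin E$ then $L_x(y)=0$. If $(x,y)\in E$ with label $\alpha=\alpha_{x,y}$ then $L_x(y)=\alpha(x+y)$.

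For such a neighbor $y$ we check that $v_y:=\alpha x+(\alpha-1)y$ is an eigenvector:
$$L_x(v_y)=\alpha x+(\alpha-1)\alpha(x+y)=\alpha^2x+\alpha(\alpha-1)y=\alpha v_y .$$
So $x\in A_1(L_x)$, each nonneighbor $y\neq x$ lies in $A_0(L_x)$, and each $v_y$ lies in $A_{\alpha_{x,y}}(L_x)$.

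Next we show that the family consisting of $x$, the nonneighbors $y$, and the vectors $v_y$ for neighbors $y$ is again a basis of $A$. Since $\alpha_{x,y}\neq1$, the coefficient $\alpha-1$ is nonzero. Hence $y=(\alpha-1)^{-1}(v_y-\alpha x)$ lies in the span of the family, and the family spans $A$. Linear independence follows from a triangular argument. Each $v_y$ involves exactly one basis vector outside $\{x\}$, namely $y$, with nonzero coefficient, and distinct neighbors give distinct $y$. So in any linear relation the coefficient of each $v_y$ must vanish, after which the relation involves only distinct elements of $X$. This also handles infinite $X$, since all relations are finite.

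It follows that $L_x$ is diagonalizable with eigenvalues in $\{1,0\}\cup\mathcal{F}_{L_x}$. Grouping the basis vectors by eigenvalue gives $A$ as the direct sum of the listed subspaces. The point needing care is that these listed spans are the full eigenspaces, not merely contained in them. This holds because the eigenvalues $1$, $0$ and the labels $\alpha\in\mathcal{F}_{L_x}$ are pairwise distinct: labels are nonzero and, by hypothesis, different from $1$. A diagonalizing basis partitioned by pairwise distinct eigenvalues yields exactly the eigenspaces, since any eigenvector for $\lambda$, written in this basis, can only involve basis vectors with eigenvalue $\lambda$. In particular $A_1(L_x)=\langle x\rangle$.

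The argument for $R_x$ is identical. We have $R_x(y)=yx=\alpha_{y,x}(x+y)$ for edges $(y,x)$ and $R_x(y)=0$ otherwise, so the same computation applies with $\alpha_{y,x}$ in place of $\alpha_{x,y}$. It uses the hypothesis that no label on an edge with head $x$ equals $1$.

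Finally, assume no label of $\Gamma$ equals $1$. Then every $x\in X$ is an idempotent with semisimple $L_x$ and $R_x$, so $x$ is an axis. Moreover $A_1(L_x)=A_1(R_x)=\langle x\rangle$, so $x$ is primitive. Since $X$ is a basis, it generates $A$, and therefore $(A,X)$ is a primitive axial algebra. The fusion law itself is not part of this statement. The main obstacle here is only bookkeeping: ensuring that the eigenvalues are distinct so that the listed spans are genuine eigenspaces, and this is exactly where $1\notin\mathcal{F}_{L_x}\cup\mathcal{F}_{R_x}$ is used.
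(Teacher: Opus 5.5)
Your proof is correct and takes the same route as the paper: compute $L_x$ and $R_x$ on $x$, on the non-neighbors, and on the vectors $\alpha x+(\alpha-1)y$. You also make explicit two points the paper calls ``clear'': these vectors form a basis of $A$ (this uses $\alpha_{x,y}\neq 1$), and because $1$, $0$ and the labels are pairwise distinct, the listed spans are the full eigenspaces rather than just subspaces of them.
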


\begin{proof}
Let $x\in X$. We consider the map $L_x$.
The algebra  $A$ clearly decomposes as $A_1(L_x)\oplus A_0(L_x)\oplus \bigoplus_{\alpha\in \mathcal{F}} A_{\alpha}(x)$.
For each $\alpha\in \mathcal{F}_{L_x}$ we find that $A_{\alpha}(L_x)$ is contained in the  $\alpha$-eigenspace for $L_x$  as
for all adjacent $x$ and $y$ with $\alpha_{x,y}=\alpha$ and all nonadjacent $x$ and $z$ we have
$$\begin{array}{l}
L_x(x)=x^2=x,\\
L_x(\alpha x+(\alpha-1)y)=x(\alpha x+(\alpha-1)y)=\alpha x+(\alpha-1)\alpha (x+y)=\alpha(\alpha x+(\alpha-1)y),\\
L_x(z)=xz=0.\\
\end{array}$$

The proof for $R_x$ is similar.
\end{proof}

If $x,y\in X$ are on the edge $(x,y)$ with label $1$, then 
$L_x$ as well as $R_y$ are  not semi-simple.
The subspace $\langle x,y\rangle$ is clearly invariant under $L_x$ and $R_y$, but the $1$-eigenspace of $L_x$ and $R_y$ on this subspace
is just $\langle x\rangle$ or $\langle y\rangle$, respectively.

So, $A$ is an axial algebra with respect to its generating set $X$ of idempotents, provided none of the edges has label $1$.

In the sequel of this section we assume that no edge is labeled by $1$ and  determine the fusion laws for  the decompositions given in
\cref{axialprop}.

We set $\alpha_{x,y}=0$ in case $x$ and $y$ are two nonadjacent vertices in $ X$.
So, for distinct $x,y\in X$ we then have $xy=\alpha_{x,y}(x+y)$.

\

\begin{lemma}\label{fusionlemma}
Suppose $\Gamma$ contains at least $3$ vertices.
Let $\alpha,\beta\in \mathcal{F}$ be different from $1$.
Then, for $K\in \{L,R\}$ and $x\in X$ we have the following:
\begin{enumerate}
\item If $\alpha=0$, then $A_\alpha(K_x)A_\alpha(K_x)\subseteq A_\alpha(K_x)$.
\item If $\alpha\neq 0$, then $A_\alpha(K_x)A_\alpha(K_x)\subseteq A_1(K_x)+A_\alpha(K_x)$. 
\item If $\alpha\neq \beta$ 
then $A_\alpha(K_x)A_\beta(K_x)\subseteq A_1(K_x)+A_\alpha(K_x)+A_\beta(K_x)$.
\end{enumerate}
\end{lemma}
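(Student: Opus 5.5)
The plan is to verify all three inclusions directly on spanning vectors, using the explicit eigenspaces of \cref{axialprop}. I treat $K=L$ throughout. The case $K=R$ then follows by applying the $L$-case to the opposite algebra $A_{\Gamma^\top}$, since $R_x$ on $A_\Gamma$ is $L_x$ on $A_{\Gamma^\top}$ and the eigenspaces correspond.

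The key observation is a bookkeeping principle for elements whose support is known. Let $c=\gamma x+\sum_{w}\mu_w w$ with each $w\neq x$. For a neighbour $w$ of $x$ (with the convention $x\sim w$, so $(x,w)\in E$) put $\alpha=\alpha_{x,w}\neq 0,1$. Then
\[ w=(\alpha-1)^{-1}\bigl((\alpha x+(\alpha-1)w)-\alpha x\bigr)\in A_1(L_x)+A_{\alpha_{x,w}}(L_x). \]
For a non-neighbour $w$ we have $w\in A_0(L_x)$. Hence
\[ c\in A_1(L_x)+\sum_{w\in \mathrm{supp}(c)\setminus\{x\}} A_{\alpha_{x,w}}(L_x), \]
using the convention $\alpha_{x,w}=0$ for non-neighbours. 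Moreover, if $x\notin\mathrm{supp}(c)$ and every $w\in\mathrm{supp}(c)$ is a non-neighbour, then $c\in A_0(L_x)$ outright.

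Now take spanning vectors of the two eigenspaces. For $\alpha\neq0$ these are $u_y=\alpha x+(\alpha-1)y$ with $\alpha_{x,y}=\alpha$; for $\alpha=0$ they are the non-neighbours $y$ themselves. A product of a spanning vector attached to $y$ with one attached to $z$ is bilinear in basis products among $\{x,y,z\}$. Since $w w'\in\langle w,w'\rangle$ for all $w,w'\in X$, this product has support inside $\{x,y,z\}$.

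By the bookkeeping principle, $u_yu_z$ lies in $A_1(L_x)+A_{\alpha_{x,y}}(L_x)+A_{\alpha_{x,z}}(L_x)$. With $\alpha_{x,y}=\alpha$ and $\alpha_{x,z}=\beta$ this gives (iii), and it gives (ii) when $\alpha=\beta\neq 0$. The degenerate case $y=z$ is included, since then the support is just $\{x,y\}$.

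For (i), both $y$ and $z$ are non-neighbours of $x$. The product $yz$ equals $y$ if $y=z$, and equals $\alpha_{y,z}(y+z)$ otherwise (possibly $0$). In either case $x\notin\mathrm{supp}(yz)$ and all support elements are non-neighbours of $x$, so $yz\in A_0(L_x)$.

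Bilinearity then extends each inclusion from spanning vectors to the whole eigenspaces. The only real care needed is to track that the coefficient of $x$ contributes only to $A_1(L_x)$, and that the vertex $y$ attached to label $\alpha$ contributes only to $A_1(L_x)+A_\alpha(L_x)$. This is exactly where the hypothesis that no label equals $1$ is used, since it lets us divide by $\alpha-1$. There is no deeper obstacle; the computation is routine once the bookkeeping principle is in place.
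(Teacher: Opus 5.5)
Your proof is correct and takes essentially the same approach as the paper: you multiply the spanning eigenvectors from \cref{axialprop}, note that each product has support inside $\{x,y,z\}$, and read off which eigenspaces it meets. Your ``bookkeeping'' step, writing $y\in A_1(L_x)+A_{\alpha_{x,y}}(L_x)$ using $\alpha_{x,y}\neq 1$, and your reduction of $K=R$ to $K=L$ via the opposite algebra $A_{\Gamma^\top}$ only make explicit what the paper leaves as ``a linear combination of $x,y,z$'' and ``the case $K=R$ is similar''.
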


\begin{proof}
Let $x\in X$. We consider $K=L$.
Then for $y,z\in A_0(L_x)\cap X$ we find  $xy$ to be a linear combination of $x$ and $y$ and hence in $A_0(L_x)$.
As $A_0(L_x)$ is generated by its elements from $X$, this proves (i) for $K=L$.

Now assume $\alpha\neq 0, 1$ in $\mathcal{F}$ and consider $A_\alpha(L_x)$.
Then this space is spanned by elements of the form $\alpha x+(\alpha-1)y$, where $(x,y)$ is an edge with label $\alpha$.
But then the product of any two such elements is a linear combination
of $x$ and such $y$. In particular, $A_\alpha(K_x)A_\alpha(K_x)\subseteq A_1(K_x)+A_\alpha(K_x)$, proving (ii).

Moreover, the product of an element $\alpha x+(\alpha-1)y$ with an element $z\in X$ not adjacent to $x$ is
a linear combination of $y$ and $z$ and thus inside $A_1(K_x)+A_\alpha(K_x)+A_0(K_x)$.
Thus $A_\alpha(K_x)A_0(K_x)\subseteq A_1(K_x)+A_\alpha(K_x)+A_0(K_x)$.
Similarly we find $A_0(K_x)A_\alpha(K_x)\subseteq A_1(K_x)+A_\alpha(K_x)+A_0(K_x)$. Hence we have proven (iii) in case $K=L$ and $\beta=0$.

Now assume that  $\alpha=\alpha_{x,y}\neq \beta_{x,z}=\beta$ are both different from $0,1$.
Then the product 
$$(\alpha x+(\alpha-1)y)(\beta x+(\beta-1)z)$$
is a linear combination of $x,y$ and $z$ and thus contained in  $A_1(L_x)+A_\alpha(L_x)+A_\beta(L_x)$, proving (iii) in case $K=L$ and $\beta\neq 0$.

The case that $K=R$ is similar.
\end{proof}

The above lemmas lead to the following theorem:

\begin{theorem}\label{fusionthm}
Let 
$\Gamma$ be a directed graph with edges labeled by elements  different from $0$ and $1$ from a subset $\mathcal{F}\subseteq \mathbb{F}$ containing $0$ in case $\Gamma$ is not complete,  $1$ and at least one other element.  
Then the  algebra $A_\Gamma$ is an axial algebra with axes in $X$ satisfying for each $x\in X$ the  fusion rules with  respect to the decompositions
$$\bigoplus_{\alpha\in \mathcal{F}} A_\alpha(L_x)$$
and $$\bigoplus_{\alpha\in \mathcal{F}} A_\alpha(R_x)$$
given in \cref{Fusion rules for graphs}.
\end{theorem}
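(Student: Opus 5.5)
The plan is to assemble \cref{fusionthm} from \cref{axialprop} and \cref{fusionlemma}, adding only the products that involve the $1$-eigenspace. Fix $x\in X$ and $K\in\{L,R\}$; by symmetry (passing to the opposite algebra $A_{\Gamma^\top}$ swaps $L$ and $R$) it suffices to treat $K=L$. Since no label equals $1$, \cref{axialprop} shows that $x$ is a primitive axis. It also gives the decomposition $A=\langle x\rangle\oplus A_0(L_x)\oplus\bigoplus_{\alpha}A_\alpha(L_x)$, where $A_0(L_x)$ is spanned by the non-neighbours of $x$. This space is nonzero only when $\Gamma$ is not complete, which is why $0\in\mathcal{F}$ is required exactly in that case. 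So all eigenvalues lie in $\mathcal{F}$, and $L_x$ is semisimple because we have an explicit eigenbasis.

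Next I verify the table of \cref{Fusion rules for graphs} (the graph-type law of \cref{fusionlaw}) entry by entry, using bilinearity so that it is enough to check spanning vectors.

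\emph{Row $1$.} We have $L_x(x)=x$. For $v\in A_\mu(L_x)$, the product $xv=\mu v$ lies in $A_\mu(L_x)$. For the right product $vx$, take spanning vectors $v=\alpha x+(\alpha-1)y$ with $\alpha=\alpha_{x,y}$, and $v=z$ with $z\not\sim x$. Then $vx$ is a linear combination of $x$ and $y$ (respectively of $x$ and $z$), and these lie in $\langle x\rangle + A_\alpha(L_x)$ (respectively $\langle x\rangle+A_0(L_x)$). A careful reading of the paper's table will determine whether the $1\star\mu$ entry should read $\{\mu\}$ or $\{1,\mu\}$ for these asymmetric products. I will compute $vx$ explicitly to decide this.

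\emph{Remaining entries.} These come directly from \cref{fusionlemma}:
\begin{itemize}
\item[(i)] gives $0\star0=\{0\}$;
\item[(ii)] gives $\alpha\star\alpha\subseteq\{1,\alpha\}$;
\item[(iii)] gives $\alpha\star\beta\subseteq\{1,\alpha,\beta\}$ for $\alpha\ne\beta$, including the case where $\beta=0$.
\end{itemize}
For (iii) with the product taken in both orders, I will reuse the calculation in that lemma's proof. The key observation there is that the product of spanning vectors supported on $\{x,y\}$ and $\{x,z\}$ is supported on $\{x,y,z\}$. Moreover, $y$ and $z$ each lie in the span of $x$ and the corresponding eigenvector, so the product lands in $\langle x\rangle+A_{\alpha_{x,y}}(L_x)+A_{\alpha_{x,z}}(L_x)$.

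The main obstacle is bookkeeping rather than difficulty. The hypothesis in \cref{fusionlemma} that $|X|\ge3$ must be handled: when $|X|\le2$ the statement is checked directly. I also need to make sure that left and right multiplication are each handled for all mixed products, since $A$ need not be commutative. Combining these items gives that $(A,X)$ is an axial algebra with the stated left and right fusion laws.
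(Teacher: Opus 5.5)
\textbf{There is a genuine gap, and it is the step you postpone.} Your route is the paper's own: \cref{axialprop} gives the eigenbasis and primitivity, and \cref{fusionlemma} gives every entry with $\lambda,\mu\neq 1$. But the entry you leave open cannot be closed as the table is printed.

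The products in question are $A_\mu(L_x)A_1(L_x)=A_\mu(L_x)\,x$. With the convention $A_\lambda A_\mu\subseteq\bigoplus_{\rho\in\lambda\star\mu}A_\rho$, these are the entries $\mu\star 1$, not $1\star\mu$. The row of $1$ is the trivial part, $x\,A_\mu(L_x)=\mu A_\mu(L_x)$. Right multiplication by $x$ is not controlled by $L_x$, and \cref{fusionlemma} excludes $1$, so the paper's one-line justification does not cover these entries either.

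Doing the computation you announce gives the following.
\begin{itemize}
\item For $v=\alpha x+(\alpha-1)y$ with $\alpha=\alpha_{x,y}$: if $(y,x)\notin E$ then $vx=\alpha x$.
\item If $(y,x)\in E$ with $\beta=\alpha_{y,x}$, then $vx=\beta v+(\alpha-\beta)x$.
\item For $z\neq x$ with $(x,z)\notin E$ but $(z,x)\in E$, one gets $zx=\alpha_{z,x}(x+z)$.
\end{itemize}
Since $A_1(L_x)\cap A_\alpha(L_x)=0$, the table requires $A_\alpha(L_x)A_1(L_x)\subseteq A_\alpha(L_x)$ and $A_0(L_x)A_1(L_x)\subseteq A_0(L_x)$. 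These hold precisely when $x$ has the same in- and out-neighbours and $\alpha_{y,x}=\alpha_{x,y}$ for all of them.

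A concrete failure occurs in the paper's own incidence graphs with point-to-line label $\alpha$ and line-to-point label $\beta\neq\alpha$. Take $x$ a point and $y$ a line on $x$. Then $vx$ has the nonzero $A_1(L_x)$-component $(\alpha-\beta)x$, so the printed entry $\alpha\star 1=\{\alpha\}$ is false there. The same computation shows $0\star 1=\{0\}$ fails whenever $\Gamma$ has a non-reciprocated edge $(z,x)$.

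\textbf{What your argument does prove.} It proves \cref{fusionthm} exactly when $A_\Gamma$ is commutative. In general it proves the left law with the column of $1$ enlarged to $\lambda\star 1=\{1,\lambda\}$ for $\lambda\neq 1$. Your own observation that $vx\in\langle x\rangle+A_\alpha(L_x)$, resp.\ $\langle x\rangle+A_0(L_x)$, is exactly this, and you should commit to it rather than defer to the table.

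Be careful also with your reduction to $K=L$. Passing to $A_{\Gamma^\top}$ reverses products and therefore transposes the fusion table. This was harmless only because the printed table is symmetric. With the corrected left law, the right law is the transposed one: $1\star\lambda=\{1,\lambda\}$ and $\lambda\star 1=\{\lambda\}$. A direct computation of $x(\alpha x+(\alpha-1)y)$ with $\alpha=\alpha_{y,x}$ confirms this.
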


\begin{table}[h!]

$$\begin{array}{|l||l|l|l|l|l|}
\hline
\star & 1 &  \alpha&\dots &\beta &(0)\\
\hline\hline
1& 1&\alpha& & \beta&0\\
\hline
\alpha& \alpha& 1,\alpha & &1,\alpha,\beta&1,\alpha,0\\
\hline
\vdots  &&&&&\phantom{a}\\
\hline
\beta &\beta& 1,\alpha,\beta&  & 1,\beta& 1,\beta,0 \\
\hline
(0)     &   0 & 1,\alpha,0&& 1,\beta,0 &0\\
\hline
\end{array}
\hspace{1cm}
\begin{array}{|l||l|l|l|}
\hline
\star & 1 &  \alpha& (0)\\
\hline\hline
1& 1&\alpha&0\\
\hline
\alpha& \alpha& 1,\alpha & 1,\alpha, 0\\
\hline
(0)     &   0 & 1,\alpha,0&0\\
\hline
\end{array}$$


\caption{Fusion law of graph type $\mathcal{G}(\mathcal{F})$ where $\mathcal{F}=\{1,\alpha,\dots,\beta,(0)\}$ and  $\mathcal{F}=\{1,\alpha,(0)\}$, where $\alpha\neq \beta$ are different from $0$ and $1$.}\label{Fusion rules for graphs}
\end{table}

The fusion laws displayed in \cref{Fusion rules for graphs} on a set $\mathcal{F}$ will be denoted by $\mathcal{G}(\mathcal{F})$ and called  fusion laws of \emph{graph type}.

\cref{algebrathm} follows from \cref{simple} and \cref{fusionthm}.

\section{Idempotents and elements of small rank}
\label{section:idempotent}
Suppose $\Gamma=(X,E)$ is a symmetric weakly connected graph  with edges labeled by nonzero elements 
from a set $\mathcal{F}\subseteq \mathbb{F}$, which we assume to contain  $1$ and also $0$ in case $\Gamma$ is not complete. (We allow edges to be labeled by $1$.)
Let $A=A_{\Gamma}$ be the corresponding  algebra.

Each symmetric directed graph can be obtained from a simple undirected graph 
by replacing edges $\{x,y\}$  by pairs of edges $(x,y)$ and $(y,x)$. 
Of course we can also reverse this process and replace in a symmetric directed graph each directed edge $(x,y)$ by an undirected edge $\{x,y\}$.
This yields a one-to-one relation between symmetric directed graphs and undirected graphs.   
The undirected graph obtained from $\Gamma$ will be called the underlying graph of $\Gamma$ and denoted by $\underline{\Gamma}$.

Now consider  the algebra $A_\Gamma$. Symmetry of $\Gamma$ is equivalent with for  $x\neq y\in X$ that  $xy\neq 0$ if and
and only if $yx\neq 0$.
For each $x\in X$ we find the left and right adjoint maps $L_x$ and $R_x$ to be a linear  map from $A$ to $A$ with eigenvalues in $\mathcal{F}$.
The image of  $L_x$ is the subspace of $A$ spanned by $x$ and all its neighbors.
The image of $R_x$ is spanned by $x$ and all $y$ for which $x$ is a neighbor. 
So the  rank of $L_x$ equals  $1+\delta^+_x$, where $\delta^+_x$ denotes the outdegree of the vertex $\in X$ and the rank of $R_x$
equals $1+\delta^-_x$, where $\delta^-_x$ is the indegree of $x$. Notice that these degrees may be infinite.
Since we assume $\Gamma$ to be symmetric we obtain that the indegree and outdegree of a vertex $x\in X$ are equal and the left and right adjoint maps $L_x$ and $R_x$
have the same rank.

Now assume $a=\displaystyle\sum_{y\in X} \lambda_y y$ to be an arbitrary element in $A$  with the adjoint maps $L_a$ of rank $r^+_a$
and $R_a$ of rank $r^-_a$ .
We will  relate some properties of the subgraph of $\Gamma$ induced on $\mathrm{supp}(a)$ to the ranks $r^+_a$ of $L_a$
and $r^-_a$ of $R_a$.
We identify $\mathrm{supp}(a)$ with the subgraph induced on it by the underlying graph $\underline{\Gamma}$ of $\Gamma$.
Notice that in the graph induced on $\mathrm{supp}(a)$ two vertices $x\neq y$ are adjacent if and only if their product $xy$ is nonzero.

\begin{lemma}\label{components}
If $a$ is an idempotent with $L_a$ or $R_a$ of rank $r$, then $\mathrm{supp}(a)$ has at most $r$ connected components.

If $a$ is an idempotent and the $1$-eigenspace of $L_a$ (or $R_a$) has dimension $m$, then $\mathrm{supp}(a)$ has at most $m$ components.
In particular, if $a$ is a primitive idempotent, then $\mathrm{supp}(a)$ is strongly connected.
\end{lemma}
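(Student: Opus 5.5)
Write $a=\sum_i a_i$, where $a_i$ is the part of $a$ supported on the $i$-th connected component $C_i$ of $\mathrm{supp}(a)$. The plan is to show that the $a_i$ are linearly independent elements lying in the image of $L_a$, and in fact in the $1$-eigenspace of $L_a$. The same argument works for $R_a$.

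First, decompose $a^2$. If $y\in C_i$ and $z\in C_j$ with $i\neq j$, then $y$ and $z$ are distinct and not adjacent. By symmetry of $\Gamma$ we get $yz=0=zy$. Hence
$$a^2=\sum_i a_i^2,$$
and each $a_i^2$ is supported inside $C_i$, since $yz\in\langle y,z\rangle$ for $y,z\in C_i$. The components are disjoint, so comparing supports in $a^2=a$ gives $a_i^2=a_i$ for every $i$. Each $a_i$ is nonzero because its support is the nonempty set $C_i$.

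Next, compute the action of $L_a$ on each $a_i$:
$$L_a(a_i)=\sum_j a_ja_i=a_i^2=a_i,$$
using $a_ja_i=0$ for $j\neq i$. So every $a_i$ is a $1$-eigenvector of $L_a$. The $a_i$ have pairwise disjoint nonempty supports, so they are linearly independent. Therefore the number of components is at most $\dim A_1(L_a)$, and this is at most $\mathrm{rank}(L_a)=r$, because $A_1(L_a)\subseteq \mathrm{im}(L_a)$. This proves both bounds, including the case where $\mathrm{supp}(a)$ has infinitely many components, which is impossible since supports are finite. The same computation with $R_a$, using $a_ia_j=0$, gives the statements for $R_a$.

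For the primitive case, $A_1(L_a)=\langle a\rangle$ has dimension $1$, so $\mathrm{supp}(a)$ has a single component. The induced graph is symmetric, so connectivity of its underlying graph is the same as strong connectivity.

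There is no real obstacle. The only point needing care is the symmetry hypothesis, which guarantees that vertices in different components satisfy $yz=zy=0$. With that in hand, the cross terms vanish both in $a^2$ and in $L_a(a_i)$ and $R_a(a_i)$.
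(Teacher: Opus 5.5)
Your proof is correct and is essentially the paper's argument: you split $a=\sum_i a_i$ along the components $C_i$ of $\mathrm{supp}(a)$ and observe that each $a_i$ is a $1$-eigenvector of $L_a$ (and of $R_a$), so the number of components is at most $\dim A_1(L_a)\le \mathrm{rank}(L_a)$. You also make explicit three points the paper leaves implicit: the cross products between different components vanish, $a_i^2=a_i$ follows by comparing components in $a^2=a$, and the $a_i$ are linearly independent because their supports are disjoint.
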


\begin{proof}
Suppose $C_1,\dots, C_n$ are the connected components of  $\mathrm{supp}(a)$
and write $a=a_1+\dots +a_n$ with $\mathrm{supp}(a_i)=C_i$.
Then $L_a(a_i)=a_i$. 
So the rank of $L_a$ is at least $n$.
This proves the first part.

Moreover, we find each $a_i$ to be a eigenvector for $L_a$ with eigenvalue $1$, and $n\leq m$, proving the second statement.
So, if $a$ is a primitive idempotent, then the eigenspace  for eigenvalue $1$ is $1$-dimensional.
Hence $\mathrm{supp}(a)$ needs to be connected.

The proof for $R_a$ is similar.
\end{proof}

\begin{lemma}\label{treeindep}\label{treebound}
Suppose $Y\subseteq \mathrm{supp}(a)$ such that the induced subgraph on $Y$ by the underlying graph of $\Gamma$ is a tree.
Let $y_0$ be a leaf. Suppose $y_1, \dots, y_n$ are the vertices of $Y\setminus L$, where $L$ is the set of leaves of $Y$.
Then $ay_0,\dots, ay_n$ (as well as $y_0a,\dots, y_na$) are linearly independent.

In particular,  $L_a(Y)$ (as well as $R_a(Y)$) has dimension  at least $|Y|+1-\ell$,
where $\ell=|L|$.
\end{lemma}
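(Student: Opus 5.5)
The proof is an induction on $|Y|$ that peels off leaves different from $y_0$. The basic tool is the shape of a product $ay$ for $y\in X$. We have
$$ay=\lambda_y y+\sum_{z\in\mathrm{supp}(a),\,z\sim y}\lambda_z\alpha_{z,y}(z+y).$$
So for $z\neq y$, the coefficient of $z$ in $ay$ is $\lambda_z\alpha_{z,y}$. This is nonzero exactly when $z\in\mathrm{supp}(a)$ and $z\sim y$, because the labels are nonzero and $\Gamma$ is symmetric. The coefficient of $y$ itself may vanish, so we will never use it. I only treat $ay_i$; the case $y_ia$ is identical with $\alpha_{y,z}$ in place of $\alpha_{z,y}$.

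\emph{Base cases.} If $|Y|=1$, then $Y=\{y_0\}$, $n=0$, and we only need $ay_0\neq0$. If $|Y|=2$, both vertices are leaves, so again $n=0$. Let $y'$ be the other vertex of $Y$. Then $y'\in\mathrm{supp}(a)$ and $y'\sim y_0$, so the coefficient of $y'$ in $ay_0$ is $\lambda_{y'}\alpha_{y',y_0}\neq0$, giving $ay_0\neq0$. In the case $|Y|=1$ with $y_0$ isolated in $\mathrm{supp}(a)$, we have $ay_0=\lambda_{y_0}y_0\neq0$. Otherwise the first case applies to any neighbour of $y_0$ in $\mathrm{supp}(a)$.

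\emph{Inductive step.} Assume $|Y|\geq3$ and suppose $\sum_{i=0}^n\mu_i\,ay_i=0$.
\begin{enumerate}
\item A tree on at least two vertices has at least two leaves, so there is a leaf $\ell\neq y_0$ of $Y$. Let $p$ be its unique neighbour in $Y$. Since $|Y|\geq3$, $p$ is not a leaf, so $p=y_j$ for some $j\geq1$.
\item Look at the coefficient of $\ell$ in the relation. The vertex $\ell$ is not one of $y_0,\dots,y_n$. Since the subgraph on $Y$ is induced, $\ell$ is adjacent to $y_i$ only for $y_i=p$. Hence the coefficient of $\ell$ equals $\mu_j\lambda_\ell\alpha_{\ell,p}$, which forces $\mu_j=0$.
\item Put $Y'=Y\setminus\{\ell\}$. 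This is again an induced tree inside $\mathrm{supp}(a)$, and $y_0$ is still a leaf of $Y'$, because its neighbour is not $\ell$. Only the degree of $p$ changed when passing from $Y$ to $Y'$. So every non-leaf of $Y$ other than $p$ is still a non-leaf of $Y'$.
\item The remaining relation involves only $ay_0$ and the $ay_i$ with $y_i$ a non-leaf of $Y$ different from $p$. This is a subfamily of the family that the induction hypothesis for $Y'$ declares linearly independent. Hence all $\mu_i=0$.
\end{enumerate}

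\emph{Dimension bound.} The $n+1$ vectors $ay_0,\dots,ay_n$ lie in $L_a(Y)$, and $n+1=(|Y|-\ell)+1$. This gives $\dim L_a(Y)\geq|Y|+1-\ell$, and the same argument gives the bound for $R_a(Y)$.

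The only delicate point is the one flagged at the start: the coefficient of $y$ in $ay$ can vanish. The argument above avoids it by reading off only coefficients of vertices $\ell$ that are not among $y_0,\dots,y_n$, and by using the base-case argument only for the single vector $ay_0$.
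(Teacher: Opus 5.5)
Your proof is correct and is essentially the paper's triangularity argument. In both, one reads off the coefficient of a vertex $z\in\mathrm{supp}(a)$ that is adjacent to exactly one of the relevant $y_i$; this coefficient is nonzero since $\lambda_z\neq 0$ and the labels are nonzero. The paper orders the non-leaves by distance from $y_0$ and uses a neighbour of $y_j$ farther from $y_0$ as the witness, while you induct by peeling off a leaf $\neq y_0$, which witnesses its unique neighbour. You also make explicit that $ay_0\neq 0$ and that the possibly vanishing diagonal coefficient is never used; the paper leaves both points implicit.
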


\begin{proof}
Let $Y\subseteq \mathrm{supp}(a)$ and assume that the induced subgraph on $Y$ is a tree.
Fix $y_0$ and order the elements $y_1, \dots, y_n$ of $Y\setminus L$ where $L$ is the set of leaves of $Y$
in such a way that for $1\leq j\leq k\leq n$ we have that the distance in the tree of $y_i$ to $y_0$ is at most the distance of $y_k$ to $y_0$
(First number all elements at distance $1$, then at distance $2$, etc.)
Let $y_{n+1},\dots, y_{n+\ell-1}$ be the  leaves of the tree induced on $Y$ different from $y_0$.
Then for each $1\leq j\leq n$, the element $y_j$ is not a leaf and we find
an element $y_k$ with $k>j$
adjacent to it.
As $Y$ is a tree, this element $y_k$ is not adjacent to $y_i$ for all $i< j$.
But then  for $i< j$ the support of $ay_i$ does not contain $y_k$, while the support of
$ay_j$ does contain $y_k$.
In particular, we find that the elements $ay_0,\dots, ay_n$ are linearly independent.

The proof that $y_0a,\dots, y_na$ are linearly independent, is similar.
\end{proof}

The \emph{girth} $g$ of the underlying graph of  $\Gamma$ is the length of a minimal cycle in this graph.
If the underlying graph of $\Gamma$ does not contain a cycle, then $g$ is set to be $\infty$.



The above lemma implies the following.

\begin{corollary}\label{tree}
Let $r_a$ be equal to $r^+_a$ or $r^-_a$. 
If $r_a\leq g-3$, then  $\mathrm{supp}(a)$ is a forest. 
If this forest consists of the trees $T_1,\dots, T_t$, with sets of leaves $L_1,\dots ,L_t$ respectively,
then $\displaystyle\sum_{i=1}^t (|T_i|-|L_i|+1)\leq r_a\leq g-3$.
\end{corollary}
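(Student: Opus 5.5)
The plan is to derive both statements of \cref{tree} from \cref{treebound}: the first by contradiction, the second by placing the images of the trees in different coordinates so their contributions add.

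\textbf{Acyclicity.} Suppose $r_a\leq g-3$ but the subgraph induced on $\mathrm{supp}(a)$ contains a cycle. Take a shortest cycle $C$ in this induced subgraph. Its length $m$ satisfies $m\geq g$, and $C$ has no chords (a chord would produce a shorter cycle). Delete one vertex of $C$. The remaining $m-1$ vertices induce a path $P$, which is an induced tree with exactly two leaves. By \cref{treebound}, $L_a(P)$ (or $R_a(P)$) has dimension at least $|P|+1-2=m-2\geq g-2>r_a$. This contradicts the rank bound, so $\mathrm{supp}(a)$ is a forest.

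\textbf{The sum bound.} Write the forest as $T_1,\dots,T_t$, one tree per connected component of $\mathrm{supp}(a)$. For each $T_i$, \cref{treebound} gives elements $ay_0^{(i)},\dots,ay_{n_i}^{(i)}$ with $n_i+1=|T_i|-|L_i|+1$ that are linearly independent. The main task is to show that these families stay independent when taken together over all $i$.

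To do this I would look at where their supports lie. For $y\in T_i$, the support of $ay$ is contained in $\{y\}\cup\{z\in\mathrm{supp}(a)\mid z\sim y\}$, and since $T_i$ is a full component of $\mathrm{supp}(a)$, every such $z$ lies in $T_i$. Hence the span of the family for $T_i$ lies in $\mathbb{F}T_i$. The components are disjoint, so the subspaces $\mathbb{F}T_i$ form a direct sum, and the union of the families is linearly independent. This gives
\[
\sum_{i=1}^t\bigl(|T_i|-|L_i|+1\bigr)\leq r_a\leq g-3 .
\]

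\textbf{Degenerate trees and the main obstacle.} Trees on one or two vertices need separate checks.
\begin{itemize}
\item If $|T_i|=1$, with vertex $y$ and $\lambda_y\neq0$, the product $ay=\lambda_y y$ is nonzero. Its count is $1-1+1=1$ if the single vertex is treated as a leaf, matching the one nonzero image.
\item If $|T_i|=2$, there are two leaves, so the count is $1$, and $ay_0\neq 0$ supplies it.
\end{itemize}
The step I expect to need the most care is exactly this bookkeeping. I must confirm that the ordering argument in \cref{treebound} really yields nonzero, independent vectors in these small cases, and that each component's contribution lies inside $\mathbb{F}T_i$ rather than leaking into neighbouring components. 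Both hold because $a$ has no support outside $\bigcup_i T_i$.
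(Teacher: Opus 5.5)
Your proof is correct and essentially the same as the paper's: a shortest cycle in $\mathrm{supp}(a)$ minus one vertex gives an induced path on at least $g-1$ vertices, forcing rank at least $g-2$, and $L_a$ (resp.\ $R_a$) maps the span $V_i$ of each component $T_i$ into $V_i$, so the per-tree bounds from \cref{treebound} add over $\bigoplus_i V_i$. Your explicit check of one- and two-vertex trees, counting an isolated vertex as a leaf, is a small detail the paper leaves implicit, and it matches the convention the paper uses later when it concludes $n=\ell=1$.
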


\begin{proof}
Suppose $r_a^+\leq g-3$.
Let $C$ be a minimal cycle in  $\mathrm{supp}(a)$. Then removing from $C$ a vertex in  yields a path on at least $g-1$ vertices, 
which by \cref{treeindep} implies that $L_a$ has rank at least $g-2$, contradicting our assumptions.
So $\mathrm{supp}(a)$ is a forest 
consisting of trees $T_1, \dots, T_t$, say.
Then the subspace spanned by the vertices of $\mathrm{supp}(a)$ is a direct sum of the various subspaces $V_i$ spanned by the vertices of $T_i$. 
By \cref{treebound} we find  for each $i$ the subspace $L_a(V_i)$ to have  dimension at least $|T_i|-|L_i|+1$. As $L_a(V_i)$ is contained in $V_i$, 
it follows that $r^+_a$  is at least $\displaystyle\sum_{i=1}^t (|T_i|-|L_i|+1)$.

Similar arguments apply in case $r^-_a\leq g-3$.
\end{proof}

\begin{lemma}\label{specialtree}
Suppose the underlying graph of $\Gamma$ has finite girth $g>4$.
If $a$ is an idempotent with $r^+_a=g-2$ or $r^-_a=g-2$, then $\underline{\Gamma}$ is a cycle, 
or $\mathrm{supp}(a)$ is a forest.
\end{lemma}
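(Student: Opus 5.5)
Assume $r^+_a=g-2$ (the case $r^-_a=g-2$ is symmetric, with $R_a$ in place of $L_a$) and that $\mathrm{supp}(a)$ is not a forest. We will show that $\underline{\Gamma}$ is a cycle.

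\emph{Step 1: $\mathrm{supp}(a)$ is exactly a $g$-cycle.} Let $C$ be a minimal cycle in $\mathrm{supp}(a)$. Its length is at least $g$, and being minimal in the induced subgraph it is an induced cycle. Removing one vertex of $C$ leaves an induced path $P$ on at least $g-1$ vertices. By \cref{treeindep}, $L_a$ restricted to $\langle P\rangle$ has rank at least $|P|-1\geq g-2$. The bound $r^+_a=g-2$ then forces $|C|=g$ and forces equality everywhere. Next, suppose $\mathrm{supp}(a)$ contained a vertex $w\notin C$ adjacent to some vertex of $C$. Then $w$ is adjacent to exactly one vertex of $C$, since two neighbours on $C$ would create a cycle shorter than $g$ once $g>4$. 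Choosing the removed vertex of $C$ away from that neighbour, $P\cup\{w\}$ is a tree with the same number of leaves as $P$ or one more, but with one more vertex. I would check that this raises the bound of \cref{treebound}, or more directly add the image $aw$ as a new independent element. Either way we get rank at least $g-1$, a contradiction. If instead $\mathrm{supp}(a)$ has several components, I would use \cref{components} together with the additivity argument in \cref{tree}: each extra component contributes at least $1$ to the rank, again exceeding $g-2$. Hence $\mathrm{supp}(a)=C$ is an induced $g$-cycle $c_1,\dots,c_g$.

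\emph{Step 2: use the idempotent equation to pin down the coefficients.} Write $a=\sum \lambda_i c_i$. Then
\[a^2=\sum_i\lambda_i^2c_i+\sum_{i}\lambda_i\lambda_{i\pm1}\alpha_{c_i,c_{i\pm1}}(c_i+c_{i\pm1}).\]
Comparing coefficients gives linear-type relations $\lambda_i=\lambda_i^2+\lambda_i(\lambda_{i-1}\alpha+\lambda_{i+1}\alpha')+\dots$. The vectors $ac_i$ span the image of $L_a$ on $\langle C\rangle$. We know this image has dimension exactly $g-2$, which imposes two linear dependencies among the $g$ vectors $ac_i$; each $ac_i$ is supported on $\{c_{i-1},c_i,c_{i+1}\}$.

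\emph{Step 3: show every vertex of $\Gamma$ lies on $C$ and has degree $2$.} Take any $x\in X$ adjacent to some $c_i$. By girth, $x$ is adjacent to at most one vertex of $C$. Then $xa$, or $ax$, has support $\{x,c_i\}$, because $x\notin\mathrm{supp}(a)$ and $x$ has exactly one neighbour there. This vector lies in the image of $L_a$ only if it is compatible with the image being contained in $\langle C\rangle$. More precisely, the image of $L_a$ is spanned by the $ay$ for $y\in X$, and $ax=\lambda_i\alpha_{c_i,x}(c_i+x)$ is nonzero with $x$ in its support. It is therefore independent of the $g-2$ vectors $ac_j$, which lie in $\langle C\rangle$. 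So the rank is at least $g-1$, a contradiction. Hence no vertex outside $C$ is adjacent to $C$, and by weak connectivity $X=C$. So $\underline{\Gamma}$ is a cycle.

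This argument covers the case where $\mathrm{supp}(a)$ is not a forest, giving the dichotomy. The main obstacle is Step 1's claim that an extra vertex attached to $C$, or an extra component, raises the rank beyond $g-2$; for that I would apply \cref{treeindep} carefully to a suitably chosen spanning tree. Notably, Step 3 alone might already suffice without Step 2, which would make the coefficient analysis unnecessary.
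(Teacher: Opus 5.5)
Your argument works and takes a different route from the paper's, but one step in Step~1 has to be pinned down. When $w\in\mathrm{supp}(a)\setminus C$ is attached to $c\in C$, the vertex you delete from $C$ must be one of the two $C$-neighbours of $c$. Then $\{w\}\cup P$ is an induced path on $g$ vertices with two leaves, and \cref{treebound} gives $r^+_a\geq g-1$. If $c$ is left interior to $P$, you get a tree on $g$ vertices with three leaves, and the bound is only $g+1-3=g-2$. So deleting a vertex ``away from'' $c$ gives no contradiction.

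Your fallback of adding $aw$ directly is not justified. Once $w\in\mathrm{supp}(a)$, the vector $ac$ itself has $w$ in its support, and the $w$-coefficient of $aw$ may vanish, so independence is not automatic. Step~2 can simply be dropped. The component step is fine: for each component $D$ of $\mathrm{supp}(a)$, $L_a$ maps $\langle D\rangle$ into itself and is nonzero on it.

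The paper proceeds differently. From the path $a_0\sim\dots\sim a_{g-2}$ it gets that $aa_0,\dots,aa_{g-3}$ span the image of $L_a$. It then uses $a=L_a(a)$ to confine $\mathrm{supp}(a)$ to these $a_i$ and their neighbours, which closes a $g$-cycle through a unique $a_{g-1}$. A further neighbour $b$ of $a_{g-1}$ is shown to give $ab$ outside that span, and rotating the path shows every cycle vertex has degree $2$. Idempotency thus localizes the support in one stroke, which keeps the paper's proof short.

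Your version is pure rank counting and does not actually need $a^2=a$, since each extra component raises the rank for any $a$. It also proves $\mathrm{supp}(a)=C$ before computing $ax=\lambda_{c_i}\alpha_{c_i,x}(c_i+x)$. That ordering matters. The paper's assertion that $ab$ is a nonzero multiple of $a_{g-1}b$ tacitly assumes $b$ has no other neighbour in $\mathrm{supp}(a)$. Girth alone guarantees this only for $g\geq 7$: for $g=6$, a support vertex $c\notin C$ with $c\sim a_2$ and $c\sim b$ creates only $6$-cycles. Your Step~1 is precisely what excludes such a $c$.
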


\begin{proof}
Suppose $\mathrm{supp}(a)$ is not a forest and hence contains a cycle.
Assume $r^+_a=g-2$.
Then $\mathrm{supp}(a)$ contains a path $a_0\sim\cdots \sim a_{g-2}$ along the cycle.
By \cref{treeindep}  we find  the elements 
$L_a(a_i)$, with $0\leq i< g-2$ to be  linearly independent.
As $r^+_a$ equals $g-2$ we find that these elements span the range of $L_a$ and, as $a$ is an idempotent,  $a$ is in this span.

So, $a$ is a linear combination of $a_0,\dots, a_{g-3}$ and their neighbors, and $\mathrm{supp}(a)$
is contained in the set of these vertices together with their neighbors.
As $\mathrm{supp}(a)$ contains a cycle on $a_0,\dots, a_{g-3}, a_{g-2}$, and can only contain neighbors of $a_0,\dots, a_{g-3}$,
it contains a cycle on $a_0,\dots, a_{g-3}, a_{g-2}, a_{g-1}$, where $a_{g-1}$ is a common neighbor of $a_0$ and $a_{g-2}$.

As $g>4$, we find that $a_{g-1}$ is the unique common neighbor of $a_0$ and $a_{g-2}$.

None of the neighbors of $a_{g-1}$ different from $a_{g-2}$ and $a_0$ is a neighbor of any of the other vertices
of the cycle and hence not in the support of $a$.
Moreover, if $b$ is a neighbor of $a_{g-1}$ different from $a_{g-2}$ and $a_0$, then 
$ab$ is a nonzero multiple of $a_{g-1}b$, which is not in the span of the elements $aa_i$, with $0\leq i<g-2$.
But these elements  span the image of $L_a$, as we saw above.
This leads to a contradiction, and we conclude that $a_{g-1}$ has only the neighbors $a_0$ and $a_{g-2}$.

Repeating this argument starting with a different path along the cycle, we find that
the support of $a$ only contains the vertices of the cycle and none of them has a neighbor outside the cycle.
We conclude that $\mathrm{supp}(a)=X$ and, as the underlying graph of $\Gamma$ is connected, it is a cycle.

If $r^-_a=g-2$, we can apply a similar argument using $R_a$ instead of $L_a$. 
\end{proof}

\begin{lemma}\label{leaves}
Suppose $a$ is an idempotent
and $\mathrm{supp}(a)$ a tree with $n$ vertices and diameter $d$.
Let $L$ be the set of leaves of $\mathrm{supp}(a)$ and set $\ell=|L|$.
Then we have the following:
\begin{enumerate}
\item 
If  $d\leq g-3$, then $r^\pm_a\geq n-\ell+1+\displaystyle\sum_{l\in L}(\delta_l-1)$.
\item 
If $d=g-2\geq 4$  then for any two leaves $a_0$ and $a_d$ at distance $d$ in $\mathrm{supp}(a)$ we have  $r^\pm_a\geq d+\delta_{a_0}+\delta_{a_d}-4$.  
\end{enumerate}
\end{lemma}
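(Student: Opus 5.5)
The plan is to exhibit many linearly independent vectors in the image of $L_a$ (and symmetrically of $R_a$). We take part of them from \cref{treeindep}, applied inside $T:=\mathrm{supp}(a)$, and the rest from products $ab$ with vertices $b\notin T$ adjacent to leaves of $T$. The key observation is that the girth keeps such $b$ from seeing more than one vertex of $T$. Then, for such $b$,
$$ab=\lambda_l\,\alpha_{l,b}(l+b),$$
where $l$ is the unique neighbour of $b$ in $T$. This vector has $b$ in its support, while every vector produced by \cref{treeindep} lies in $\langle T\rangle$.

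For (i), apply \cref{treeindep} to $Y=T$. This gives $n-\ell+1$ linearly independent vectors $ay_0,\dots,ay_{n-\ell}$ in $L_a(\langle T\rangle)\subseteq\langle T\rangle$. A leaf $l$ has exactly one neighbour in $T$, so it has $\delta_l-1$ neighbours $b\notin T$ (note $b\notin\mathrm{supp}(a)=T$).

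Such a $b$ cannot be adjacent to a second vertex $y\in T$. Otherwise the path from $l$ to $y$ in $T$, together with $b$, closes a cycle of length at most $d+2\le g-1$, which is impossible. For the same reason two distinct leaves cannot share such a neighbour $b$.

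Hence the vectors $ab=\lambda_l\alpha_{l,b}(l+b)$ are nonzero, since $\lambda_l\ne0$ and $\alpha_{l,b}\ne0$. Their $b$'s are pairwise distinct and lie outside $T$. Modulo $\langle T\rangle$ they are therefore independent, and so together with the $n-\ell+1$ vectors from \cref{treeindep} they are independent. This gives $r^+_a\ge n-\ell+1+\sum_{l\in L}(\delta_l-1)$. The argument for $R_a$ is identical, using $ba$.

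For (ii), fix the path $P: a_0\sim a_1\sim\cdots\sim a_d$ in $T$ and apply \cref{treeindep} to $Y=P$. This gives $d$ independent vectors $aa_0,\dots,aa_{d-1}$ in $\langle T\rangle$. It then suffices to find $\delta_{a_0}-2$ neighbours of $a_0$ and $\delta_{a_d}-2$ neighbours of $a_d$ outside $T$, all distinct and each having a unique neighbour in $T$. For such $b$, the argument of (i) again yields independent vectors outside $\langle T\rangle$.

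Now $d=g-2$, so a neighbour $b\notin T$ of $a_0$ can be adjacent to another vertex $y\in T$ only if $d_T(a_0,y)+2\ge g$. That forces $d_T(a_0,y)=d$, so $y$ is a leaf at maximal distance from $a_0$. Also, $g\ge 6$ rules out $4$-cycles, so such a $y$ has at most one common neighbour with $a_0$. The same holds with $a_0$ and $a_d$ interchanged.

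The main obstacle is controlling these exceptional neighbours $b$ that see two far-apart leaves. For such $b$, the coefficient of $b$ in $ab=\sum_{y\sim b}\lambda_y\alpha_{y,b}(y+b)$ may vanish, and they may also be shared between the two ends of the path. To handle this, I would first show that $a_0$ has at most one such exceptional neighbour. Suppose $b,b'$ were exceptional neighbours of $a_0$, with $b\sim y$ and $b'\sim y'$. Then $y\neq y'$, and the cycle $b\,y\cdots y'\,b'\,a_0\,b$ has length at most $d_T(y,y')+4$. Since $d_T(y,y')\le d$ and the branches from $a_0$ to $y$ and to $y'$ share their initial segment, this should be shorter than $g$.

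If that bound holds, discarding at most one neighbour at each end leaves $\delta_{a_0}-2$ and $\delta_{a_d}-2$ clean neighbours. These are distinct, since a common clean neighbour would be exceptional. Together with the $d$ path vectors they give $r^\pm_a\ge d+\delta_{a_0}+\delta_{a_d}-4$.

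If the cycle-length estimate fails in some configuration, the fallback is to use the idempotent relation $a^2=a$. The idea is to show that the coefficient of $b$ in $ab$ is nonzero whenever the extra neighbour $y$ is a leaf at distance $d$, or alternatively to replace $ab$ by $ab-\mu\, aa_j$ for a suitable $j$ and scalar $\mu$.
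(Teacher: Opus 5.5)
\textbf{Verdict: your part (i) is correct, but part (ii) has a genuine gap at exactly the step you flagged; the fix you propose there is false.}

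Part (i) is the paper's argument. The $n-\ell+1$ vectors from \cref{treeindep} lie in $\langle T\rangle$. Each of the $\sum_{l\in L}(\delta_l-1)$ outside neighbours $b$ of leaves gives $ab=\lambda_l\alpha_{l,b}(l+b)$, which has a private coordinate $b$.

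For (ii) you also follow the paper's route: the $d$ vectors $aa_0,\dots,aa_{d-1}$ plus $\delta_{a_0}-2$ and $\delta_{a_d}-2$ products with outside neighbours of the two end leaves. The paper takes neighbours of $a_0$ not adjacent to $a_1,\dots,a_d$ and simply writes $ax=a_0x$. You are right that this hides something: such an $x$ may also be adjacent to an off-path leaf $y$ with $d_T(a_0,y)=d$.

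Your claim that $a_0$ has at most one such exceptional neighbour fails. Take $d=4$ and let $T$ be the path $a_0a_1a_2a_3a_4$ with an extra leaf $b$ attached to $a_3$. Add outside vertices $x\sim a_0,a_4$ and $x'\sim a_0,b$.
\begin{itemize}
\item This graph has exactly three cycles, all of length $6$, so $g=6=d+2$.
\item For generic labels on the edges of $T$, the equation $a^2=a$ has a solution with $\mathrm{supp}(a)=T$.
\item Here $\delta_{a_0}=3$ and $a_0$ has two exceptional neighbours. So it has no clean neighbour, while your count needs $\delta_{a_0}-2=1$.
\item Your cycle $x\,a_4\,a_3\,b\,x'\,a_0\,x$ has length $d_T(a_4,b)+4=6=g$, not less than $g$. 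In general $d_T(y,y')+4$ can reach $d+4$.
\item Hanging $k$ leaves on $a_3$, each joined to $a_0$ through its own outside vertex, gives $a_0$ as many as $k$ exceptional neighbours.
\end{itemize}

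Your fallback does not close the gap either. If $N(x)\cap T=\{a_0,y\}$, then
\[
ax=\lambda_{a_0}\alpha_{a_0,x}(a_0+x)+\lambda_y\alpha_{y,x}(y+x).
\]
The equation $a^2=a$ involves only labels of edges inside $T$, so nothing prevents $\lambda_{a_0}\alpha_{a_0,x}+\lambda_y\alpha_{y,x}=0$. In that case $ax$ is a multiple of $a_0-y\in\langle T\rangle$ and has no private coordinate.

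What is missing is an actual independence count for these degenerate vectors, which may come from both ends and share far leaves, together with whatever survives of $aa_0,\dots,aa_{d-1}$. One way to organise this is to work modulo their span $W$. Note that path vectors can then lose their leading terms: in the example, $aa_3$ loses $a_4$ modulo $a_0-a_4$. In that example the bound $r_a\ge 5$ does still follow from such a count, so the problem is the argument, not the statement. ``Replace $ab$ by $ab-\mu\,aa_j$'' is not such an argument, and as written (ii) is not proved.
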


\begin{proof}
Suppose $\mathrm{supp}(a)$ is a   tree  with $n$ vertices, $\ell$ leaves and diameter $d$.

If $d\leq g-3$, a vertex $b\in X\setminus\mathrm{supp}(x)$ is adjacent in $\underline{\Gamma}$ to at most one vertex of $\mathrm{supp}(a)$.
For  each leaf $x\in \mathrm{supp}(a)$ and neighbor $y$ outside $\mathrm{supp}(a)$, 
we find $ay=\lambda_x xy=\lambda_x\alpha_{x,y} (x+y)$ to be in the range of $L_a$
and $ya=\lambda_x yx=\lambda_x\alpha_{y,x} (x+y)$ to be in the range of $R_a$.

As each leaf $l$ of $\mathrm{supp}(a)$ has at least $\delta_l-1$ neighbors outside $\mathrm{supp}(a)$, we find 
$L_a(\langle X\setminus\mathrm{supp}(a)\rangle)$ and $R_a(\langle X\setminus\mathrm{supp}(a)\rangle)$ to have a subspace of dimension at least $\displaystyle\sum_{l \in L}(\delta_l-1)$ intersecting $\langle \mathrm{supp}(a)\rangle$ trivially.
As we saw in \cref{treebound} the spaces  $L_a(\langle \mathrm{supp}(a)\rangle)$ and  $R_a(\langle \mathrm{supp}(a)\rangle)$ have dimension at least $n-\ell+1$
and are contained in $\langle \mathrm{supp}(a)\rangle$.
Hence $r^\pm_a\geq n-\ell+1+\displaystyle\sum_{l\in L}(\delta_{l}-1)$.

If $d=g-2\geq 4$, then consider two leaves $a_0$ and $a_d$ of $\mathrm{supp}(a)$ at distance $d$
and take a path $a_0\sim\dots \sim a_d$ from $a_0$ to $a_d$ inside $\mathrm{supp}(a)$.
As $g\geq 6$, the elements $a_0$ and $a_d$ have at most one common neighbor.
So, we can find $\delta_{a_0}-2$ neighbors $x$ of $a_0$ 
which are not adjacent to  any of the vertices $a_i$ with $i>0$. 
Similarly we find  $\delta_{a_d}-2$ vertices $y\in X$ which are only adjacent to $a_d$, but not any of the vertices $a_i$ with $i<d$.
Then $aa_0,\dots, aa_{d-1}$ (or $a_0a,\dots, a_{d-1}a$)
together with the various $ax=a_0x$, and $ay=a_dy$ (or $xa=xa_0$, and $ya=ya_d$)  are linearly independent, proving $r^\pm_a$ to be at least $d+\delta_{a_0}+\delta_{a_d}-4$.  
\end{proof}

\section{Automorphism groups of the algebras}
\label{section:auto}

As before, let $\mathcal{F}$ be a subset of the field $\mathbb{F}$ containing $0,1$, and $\Gamma=(X,E)$ a weakly connected directed graph
with all edges labeled by nonzero elements from $\mathcal{F}$. 
Then the algebra $A=A_\Gamma$ is considered to be an algebra over the field $\mathbb{F}$.

In this section will provide some results in which we present some conditions under which the automorphism group of the graph $\Gamma$ and 
the algebra $A_\Gamma$ are isomorphic. We assume the graph $\Gamma$ to be symmetric.

Denote by $k_{\mathrm{max}}$ the maximum of $\{\delta^\pm_x\mid x\in X\}$ and $k_{\mathrm{min}}$ its minimum, or in case this maximum or minimum does not exist, set $k_{\mathrm{max}}$ or $k_\mathrm{min}$ to be equal to $\infty$.

We first prove \cref{graphthm}:

\begin{theorem}\label{autgroupthm}
Let $\Gamma$ be a weakly connected symmetric directed graph labeled with values different from $0$ and $1$, such that its underlying graph has  finite girth $g$,
and $2<k_\mathrm{min}\leq g-3$, 
and $k_\mathrm{max}\leq\mathrm{min}(2(k_\mathrm{min}-1),g-3)$.

Then $\mathrm{Aut}(A)$ is isomorphic to
the automorphism group $\mathrm{Aut}(\Gamma)$ of the labeled graph $\Gamma$.
\end{theorem}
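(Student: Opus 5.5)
Given an automorphism $\varphi$ of $A=A_\Gamma$, the plan is to show that $\varphi$ maps $X$ onto $X$. Then $\varphi$ permutes the basis $X$ and preserves adjacency, since $xy\neq 0$ if and only if $x\sim y$. It also preserves labels, because $xy=\alpha_{x,y}(x+y)$ forces $\varphi(x)\varphi(y)=\alpha_{x,y}(\varphi(x)+\varphi(y))$. So $\varphi$ restricts to a labeled graph automorphism. Conversely, every automorphism of the labeled graph induces an algebra automorphism, and the two maps are mutually inverse, which gives the isomorphism. The whole proof therefore comes down to a characterization of $X$ inside $A$ that is invariant under $\mathrm{Aut}(A)$.

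For the characterization I would use idempotents $a$ with $r^+_a\leq k_{\mathrm{max}}+1$ for which $\mathrm{spec}(L_a)\subseteq\mathcal{F}$ and the $1$-eigenspace of $L_a$ is one-dimensional, i.e.\ primitive. Each $x\in X$ qualifies: it is primitive by \cref{axialprop} and has $r^+_x=1+\delta_x\leq k_{\mathrm{max}}+1$. Since the whole set of such elements is invariant under automorphisms, it suffices to show that every such $a$ lies in $X$. Here I would argue by rank. If $k_{\mathrm{max}}+1\leq g-3$, \cref{tree} gives directly that $\mathrm{supp}(a)$ is a forest. In the boundary case $r^+_a=g-2$, I would use \cref{specialtree} instead: $\underline{\Gamma}$ cannot be a cycle because $k_{\mathrm{min}}>2$, so again $\mathrm{supp}(a)$ is a forest. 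By \cref{components}, primitivity then makes $\mathrm{supp}(a)$ a single tree.

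Next I need to exclude trees with at least two vertices. Let the tree have $n\geq 2$ vertices, $\ell\geq 2$ leaves and diameter $d$. If $d\leq g-3$, \cref{leaves}(i) gives
\[
r^+_a\geq n-\ell+1+\ell(k_{\mathrm{min}}-1)\geq 2+2(k_{\mathrm{min}}-1)+(n-\ell-1)\cdot 0 .
\]
In fact, for $n\geq 3$ the tree has an internal vertex, so this becomes at least $2+2(k_{\mathrm{min}}-1)\geq k_{\mathrm{max}}+2$, which is a contradiction. If $d=g-2$, \cref{leaves}(ii) gives $r^+_a\geq g-2+2k_{\mathrm{min}}-4>g-2\geq k_{\mathrm{max}}+1$, using $k_{\mathrm{min}}\geq 3$. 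The case $n=2$ needs direct treatment. Take $a=\lambda x+\mu y$ with $x\sim y$ and label $\alpha=\alpha_{x,y}$. Idempotency requires $\lambda^2+2\lambda\mu\alpha=\lambda$ and $\mu^2+2\lambda\mu\alpha=\mu$, so either $a$ is the single vertex or $\lambda=\mu=1/(1+2\alpha)$. In the latter case $L_a$ has rank $\delta_x+\delta_y\geq 2k_{\mathrm{min}}>k_{\mathrm{max}}+1$, which excludes it. Then $|\mathrm{supp}(a)|=1$ and idempotency gives $a\in X$.

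I expect the main obstacle to be the counting in the tree case, where the inequalities are tight. I would check that the bound $k_{\mathrm{max}}\leq 2(k_{\mathrm{min}}-1)$ is exactly what excludes a tree whose only two leaves sit at the ends of a path. I would also need to handle $r^+_a=g-2$ when $k_{\mathrm{max}}=g-3$, and confirm that the hypothesis $g>4$ of \cref{specialtree} holds, which follows from $g\geq k_{\mathrm{min}}+3\geq 6$.
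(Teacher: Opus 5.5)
Your overall route is the paper's. Primitivity together with $r_a\le k_{\mathrm{max}}+1\le g-2$ forces $\mathrm{supp}(a)$ to be a single tree via \cref{components,tree,specialtree}, and \cref{leaves} removes trees with $n\ge 3$ or with diameter $g-2$. You are right that the single edge ($n=2$) needs its own argument. There \cref{leaves}(i) only gives $r_a\ge 2k_{\mathrm{min}}-1$, which does not contradict $r_a\le k_{\mathrm{max}}+1$ when $k_{\mathrm{max}}=2(k_{\mathrm{min}}-1)$. This is the tight case, not a longer path, and the paper's one-line inequality passes over it too.

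\textbf{The gap.} Your treatment of $n=2$ writes a single label $\alpha$, so it assumes $\alpha_{x,y}=\alpha_{y,x}$. The theorem allows different labels on $(x,y)$ and $(y,x)$. Take $a=\lambda x+\mu y$ with $\lambda\mu\neq 0$ and set $s=\alpha_{x,y}+\alpha_{y,x}$. Idempotency reads $\lambda+s\mu=1=\mu+s\lambda$. The restriction of $L_a$ to $\langle x,y\rangle$ has determinant $\lambda^2\alpha_{x,y}+\lambda\mu+\mu^2\alpha_{y,x}$.
\begin{itemize}
\item When $s=1$, every pair with $\lambda+\mu=1$ gives an idempotent. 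Your dichotomy already misses these for symmetric $\alpha=\frac12$, although there the determinant is $\frac12\neq0$, so no harm is done.
\item When $\alpha_{x,y}\neq\alpha_{y,x}$, the determinant vanishes for one such $\lambda$. For example, over $\mathbb{Q}$ with $\alpha_{x,y}=2$ and $\alpha_{y,x}=-1$, the idempotent $a=\frac13x+\frac23y$ has $L_a(x),L_a(y)\in\langle x+2y\rangle$. Hence $r^+_a=\delta_x+\delta_y-1$.
\item If $\delta_x=\delta_y=k_{\mathrm{min}}=3$ and $k_{\mathrm{max}}=4$, this rank equals $k_{\mathrm{max}}+1$. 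For generic labels on the remaining edges at $x$ and $y$, the $1$-eigenspace of $L_a$ is $\langle a\rangle$.
\end{itemize}
So the set you define through $L_a$ alone contains elements outside $X$, and your claim that it is contained in $X$ is false.

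\textbf{The repair.} Use both adjoints; taking $a=\varphi(x_0)$ gives this for free, since $r^+_a=r^-_a=1+\delta_{x_0}$. The determinant of $R_a$ on $\langle x,y\rangle$ is $\lambda^2\alpha_{y,x}+\lambda\mu+\mu^2\alpha_{x,y}$.
\begin{itemize}
\item If $s\neq1$, then $\lambda=\mu=1/(1+s)$, and both determinants equal $\lambda\neq0$.
\item If $s=1$, the two determinants add up to $(\lambda+\mu)^2=1$, so they cannot both vanish.
\end{itemize}
Since $x$ and $y$ have no common neighbour ($g\ge6$), one of $L_a,R_a$ has rank $\delta_x+\delta_y\ge 2k_{\mathrm{min}}>k_{\mathrm{max}}+1$, a contradiction. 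Accordingly, your invariant set should require both $r^+_a\le k_{\mathrm{max}}+1$ and $r^-_a\le k_{\mathrm{max}}+1$. With that change, the rest of your argument goes through.
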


\begin{proof}
Clearly $\mathrm{Aut}(\Gamma)$, the automorphism group of $\Gamma$ as labeled graph, embeds injectively into the  group of automorphisms of $A$.
So, it remains to prove that $\mathrm{Aut}(A)$ embeds injectively into $\mathrm{Aut}(\Gamma)$

Suppose the underlying graph of $\Gamma$ is a graph with girth $g$ and $2<k_\mathrm{min}\leq g-3$, 
and $k_\mathrm{max}\leq\mathrm{min}(2(k_\mathrm{min}-1),g-3)$.

Take $h\in \mathrm{Aut}(A)$  and $x_0\in X$ and set $a=x_0^h$.
The element $x_0$ is a primitive idempotent and both $L_{x_0}$ and $R_{x_0}$ are of  rank at most $1+k_\mathrm{max}$, as follows from \cref{axialprop}.
But then $a$ is also a primitive idempotent and the  ranks $r^+_a$ of $L_a$ and $r^-_a$ of $R_a$   are at most $1+k_\mathrm{max}$ which is at most $g-2$.
Let  $r_a$ be one of  $r^+_a$ or $r^-_a$.
If $r_a\leq g-3$, then by \cref{components,tree}, the support of $a$ is a tree.
If $r_a=g-2$, then \cref{specialtree} applies, and as each vertex has degree at least $3$, we can again conclude
the support of $a$ to be a tree.
If this tree has $n$ vertices and  $\ell\geq 2$ leaves and diameter $\leq g-3$, 
then  \cref{leaves} implies that $r_a\geq n-\ell +1+\ell(k_\mathrm{min}-1)$. 
Since $r_a\leq 1+k_{\mathrm{max}}\leq 2k_{\mathrm{max}}-1$, we find $n=\ell=1$.

If this tree has  has diameter $g-2$, then $r_a$ is at least $g$ by \cref{leaves} and we get a contradiction.
We conclude that $\mathrm{supp}(a)$ is of size $1$.
So $a=\lambda x$ for some $x\in X$ and $\lambda\in \mathbb{F}$.
But then $a^2=a$ implies that $a=x\in X$.

So, we can conclude that $h(X)\subseteq X$. Moreover, as $X$ is a basis of $A$ and $h$ an automorphism of $A$, we find $h(X)=X$.
Hence $\mathrm{Aut}(A)$ leaves the set $X$ of primitive axes invariant and induces a permutation action on  $X$.  
Moreover, as $x,y\in X$ are adjacent and the edge has label $\alpha_{x,y}$ if and only if their product $xy=\alpha_{x,y}(x+y)\neq 0$, 
and $X$ is a basis for $A$, we find that $\mathrm{Aut}(A)$ embeds injectively into the automorphism group of the labeled graph $\Gamma$.
This proves the isomorphism of the two groups.
\end{proof}

\begin{example}
As already noticed in the introduction, various graphs with constant valency $3$ and girth at least $6$ with interesting automorphism group exist.
We can apply the above theorem to obtain for these graphs algebras with interesting automorphism group.

We give some examples:
Taking for $\Gamma$ the symmetric directed graph with underlying graph the    Heawood or Coxeter graph, both with automorphism group $\mathrm{PSL}_3(2):2$,
Tutte's 8-cage with automorphism group $\mathrm{Sym}_6.2$, the Foster graph with  automorphism group $3\cdot\mathrm{Sym}_6.2$ or the Biggs-Smith graph with automorphism group $\mathrm{PSL}_2(17)$ and labeling its edges with a fixed  $\alpha\in \mathbb{F}$ different from $0,1$ provides such commutative axial algebras $A_\Gamma$. See for example \cite{drg} for a description of these graphs.
\end{example}

\begin{example}
Let $G$ be a group and $S$ a set of generators of size at least $3$, such that for each $s\in S$ we also have $s^{-1}\in S$.
Then let $\Gamma$ be the Cayley graph of $G$ with respect to this set $S$ of generators.
Now fix a map $\alpha:S\rightarrow \mathbb{F}\setminus \{0,1\}$ and label the edges $(g,gs)$ with $\alpha(s)$.
Then we obtain a symmetric directed graph $\Gamma$ with constant degree $|S|$.
The girth of the underlying graph is the length of the shortest reduced word in $S$ which is the identity in $G$.

In \cite{bray,cayley} one can find various examples of finite groups $G$ with small generating  $S$ (e.g. of size  $3$, $4$  or $5$) and for which  the 
corresponding Cayley graph has girth $g\geq |S|+2$.
In particular if the map $\alpha$ is injective, we find $G$ to be the automorphism group of $\Gamma$ and by the above result
also the automorphism group of $A_\Gamma$.
\end{example}

A \emph{partial linear space} $\Delta=(P,L)$ consists of a set $P$ of \emph{points}, and set $L$ of \emph{lines} which are subsets of $P$ of size at least $2$, such that each pair of distinct points is on a unique line.
So, an ordinary graph is a partial linear space in which all lines have size $2$.

The \emph{directed incidence graph} of a partial linear space $\Delta=(P,L)$ is the directed graph with vertex set $X=P\cup L$ and two vertices adjacent if one of them is a point and the other a line containing the point.
Clearly a directed incidence graph is symmetric.

\begin{example}
The directed incidence graph $\Gamma$ of a generalized quadrangle  of order $(s,s)$ has valency $s+1$ and girth $8$. (See for example \cite{drg}.) 
So, for $2\leq s\leq 4$ we can apply \cref{autgroupthm} and after labeling the edges $(x,y)$ of this incidence graph where 
$x$ is a point and $y$ a line with some fixed $\alpha$ different from $0,1$ and the edges $(a,b)$ where $a$ is a line and $b$ a point with a fixed $\beta$
different from $0,1$
we obtain an algebra $A_\Gamma$ whose automorphism group is isomorphic with that of the labeled graph $\Gamma$.
This algebra is commutative if $\beta=\alpha$, but non-commutative otherwise. 

The (directed) \emph{incidence graph} $\Gamma$ of a generalized hexagon  of order $(s,s)$ has valency $s+1$ and girth $12$.
So, here we can apply the above to obtain commutative as well as non-commutative algebras, which by  \cref{autgroupthm} for $2\leq s\leq 8$ have  automorphism group  isomorphic to that of the labeled graph $\Gamma$.
\end{example}

Our approach to prove  \cref{autgroupthm} is based on the fact that we can identify idempotents $a$ in the algebra $A_\Gamma$  for which the
rank of $L_a$ or $R_a$ is small.
We now exploit that method to identify a significant set of idempotents as elements of $X$ in case $X$ is the vertex set of an incidence graph and prove \cref{incgraphthm}.
But first we derive some well known properties of incidence graphs. 

\begin{lemma}\label{iso_of_groups}
Let $\Gamma$ be the directed incidence graph of a graph $\Delta$ containing a vertex of degree at least $3$, or a connected partial linear space with three points per line and a point which is on at least four lines.

Then the following holds:
\begin{enumerate}
\item The underlying graph of $\Gamma$ has girth at least $6$;
\item $\mathrm{Aut}(\Gamma)$ is isomorphic to $\mathrm{Aut}(\Delta)$.
\end{enumerate}
\end{lemma}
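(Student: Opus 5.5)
For (i) I will use that the underlying graph of $\Gamma$ is bipartite, with parts $P$ (points, or vertices of $\Delta$) and $L$ (lines, or edges of $\Delta$). So every cycle has even length, and the girth is at least $4$. A $4$-cycle would have the form $p\sim \ell\sim q\sim m\sim p$ with $p\neq q$ points and $\ell\neq m$ lines, all four distinct. Then $p,q$ lie on two distinct lines. In a graph two vertices span at most one edge, and in a partial linear space two points lie on a unique line, so this is impossible. Hence the girth is at least $6$; if the underlying graph is a forest the girth is $\infty$, which is also fine.

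For (ii), every automorphism of $\Delta$ permutes points and lines and preserves incidence, so it induces an automorphism of $\Gamma$. Different automorphisms of $\Delta$ act differently on $P$, so the induced map $\mathrm{Aut}(\Delta)\to\mathrm{Aut}(\Gamma)$ is injective. Here I read $\mathrm{Aut}(\Gamma)$ as the automorphism group of the unlabeled directed graph; for labeled graphs one restricts both sides to label-preserving automorphisms. For surjectivity the key claim is that every automorphism of $\Gamma$ maps $P$ to $P$. Once that holds, it maps $L$ to $L$ as well. Since each line is determined by its set of points (the set of its neighbours in $\Gamma$), the automorphism then induces a permutation of $P$ that maps lines to lines, i.e.\ an automorphism of $\Delta$ inducing it. Injectivity on $P$ also guarantees that no nontrivial automorphism of $\Gamma$ fixes $P$ pointwise.

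The main obstacle is showing that $P$ is preserved, and the plan is to separate the two sides by degrees. Lines have degree exactly $2$ (graph case) or exactly $3$ (partial linear space case) in $\Gamma$. $\Gamma$ is connected because $\Delta$ is connected, so its bipartition is unique. Hence an automorphism either preserves $P$ and $L$ or swaps them. A swap maps the point of degree at least $3$ (resp.\ on at least four lines, so of degree at least $4$) to a line, whose degree is $2$ (resp.\ $3$). This is impossible, because automorphisms preserve degrees. The connectedness argument must be made carefully: an automorphism maps each connected component onto a component. Since $\Gamma$ has only one component, it respects the bipartition up to a global swap, and the degree argument rules out the swap.

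Finally, I would check the hypotheses of the statement. Connectivity of $\Gamma$ requires $\Delta$ to be connected; in the graph case the statement does not say this, so I would add it (it is certainly assumed in \cref{incgraphthm}). One also needs every line to have at least one point for the identification to work; this holds automatically.
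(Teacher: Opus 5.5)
Your proposal is correct and follows essentially the paper's argument. For (i) you use bipartiteness together with the uniqueness of the line through two points, and for (ii) you use that a vertex of degree larger than the line size must be a point, so that, by connectivity, the points are exactly the vertices on its side of the bipartition (in the paper, those at even distance from it) and $\Delta$ is recovered from $\Gamma$. Your remark that connectivity of $\Delta$ is also needed in the graph case is well taken: the paper's distance-parity argument uses it implicitly, and it is assumed in \cref{incgraphthm} and \cref{autgroupthm2}.
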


\begin{proof}
If the underlying graph of  $\Gamma$ contains a cycle of length $\leq 5$, then as this graph is bipartite, it is of length $4$ and
we find a pair of points being together on two lines. This contradicts the definition of partial linear space.
Hence (i) follows.

Suppose $\ell$ is a line of $\Delta$. Then in $\Gamma$ this line $\ell$ has degree $|\ell|$ (so, $2$ or $3$).
A point  of $\Delta$ which is on at least $|\ell|+1$ edges or lines in $\Delta$ has degree at least $|\ell|+1$ in $\Gamma$.

Take a vertex $p$ of $\Gamma$ with degree at least $|\ell|+1$. Then the points (or vertices) of $\Delta$ are the vertices of 
$\Gamma$ which are at even distance from $p$. The lines of $\Delta$ are the vertices $\Gamma$ at odd distance from $p$.
Moreover a point is on a line if and only if they are adjacent in $\Gamma$.
So, we can reconstruct $\Delta$ from $\Gamma$ and find their automorphism groups to be isomorphic. This proves (ii).
\end{proof}

\begin{theorem}\label{autgroupthm2}
Suppose $\Delta=(Y,F)$ is a connected graph with  $k_{\mathrm{min}}\geq 3$ or a connected 
partial linear space with three points per line and at least four lines per point.
Let $\Gamma$ be the directed incidence graph of $\Delta$.
Then for any labeling of $\Gamma$ with elements different from $0$ and $1$, 
we find  that $\mathrm{Aut}(A)$ is isomorphic to $\mathrm{Aut}(\Gamma)$ the automorphism group of the labeled graph $\Gamma$.
\end{theorem}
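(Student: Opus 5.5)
As in the proof of \cref{autgroupthm}, the group $\mathrm{Aut}(\Gamma)$ embeds in $\mathrm{Aut}(A)$. So the task is to show that every $h\in\mathrm{Aut}(A)$ permutes $X$. Once that is done, the argument at the end of the proof of \cref{autgroupthm} applies verbatim: $x,y$ are adjacent with label $\alpha_{x,y}$ if and only if $xy=\alpha_{x,y}(x+y)\neq 0$, so $h$ induces an automorphism of the labeled graph. The new difficulty is that the degrees in $\Gamma$ are no longer nearly equal. Lines have degree $2$ (resp.\ $3$), while points have degree at least $3$ (resp.\ $4$) and possibly much larger. By \cref{iso_of_groups} the girth is only known to be at least $6$. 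Hence we cannot map $X$ into $X$ in one step by a uniform rank bound. Instead I will first identify the images of the line-vertices and then recover the point-vertices from them.

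\emph{Step 1: small-rank primitive idempotents.} Let $e$ be a line-vertex, so $\mathrm{rank}\,L_e=1+|e|\in\{3,4\}$, and put $a=e^h$. Then $a$ is a primitive idempotent with $r_a^\pm=1+|e|$, and by \cref{components} its support is connected. I will show that $a\in X$.
\begin{itemize}
\item If $\mathrm{supp}(a)$ contained a cycle, it would have length at least $6$, and \cref{treeindep} applied to a path of $g-1\geq 5$ vertices on it would give rank at least $g-2\geq 4$. This bound is only borderline, so here I would redo the argument of \cref{specialtree} directly: a cycle in the support forces neighbors of cycle vertices into the image of $L_a$, pushing the rank above $4$. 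This uses that point-vertices have degree at least $3$ (resp.\ $4$).
\item So $\mathrm{supp}(a)$ is a tree. Every leaf $l$ contributes at least $\delta_l-1\geq 1$ extra dimensions via \cref{leaves}, as long as the diameter is at most $g-3$. A leaf that is a point-vertex contributes at least $2$ (resp.\ $3$).
\item Combining these bounds with $r_a\leq 3$ (resp.\ $4$), I expect to rule out all trees with more than one vertex. The dangerous cases are small trees whose leaves are all line-vertices, such as a star centred at a point with line-vertex leaves. These I will kill by a direct computation with the idempotent equation $a^2=a$: the coefficient of each neighbor of a leaf outside the support must vanish, and this gives a contradiction because the labels are nonzero and different from $1$. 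The large-diameter case $d=g-2$ is handled by the second part of \cref{leaves}.
\end{itemize}
Hence $a=\lambda x$ with $x\in X$, and idempotency gives $a=x$. Moreover $\mathrm{rank}\,L_x=\mathrm{rank}\,L_e$ forces $\delta_x=|e|$, so $h$ maps line-vertices to vertices of degree $|e|$.

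\emph{Step 2: reaching the point-vertices.} For a point-vertex $p$ and a line $e\ni p$, the product $pe=\alpha_{p,e}(p+e)$ is a nonzero element of the span of $p$ and $e$. By Step 1 we have $e^h=x\in X$, and then $p^h x=\alpha_{p,e}(p^h+x)$. Write $b=p^h$. The relation $bx=\alpha(b+x)$ says that $\mathrm{supp}(b)$ can only meet $x$ and its neighbours, and comparing coefficients should force $b$ to be a vertex adjacent to $x$. Indeed, from $xb=\mu x+\dots$ we get $\mathrm{supp}(b)\subseteq\{x\}\cup N(x)$. Doing the same for a second line $e'\ni p$, whose image is $x'$, confines $\mathrm{supp}(b)$ to the intersection of two such neighbourhoods. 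Since the girth is at least $6$, this intersection is a single common neighbour, and so $b\in X$.

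In the graph case there may be points of degree $2$ in $\Gamma$ only if $\Delta$ has no such points; since $k_{\min}\geq3$, every point lies on at least two lines, so this argument is available for all points. Finally $h(X)\subseteq X$, and since $X$ is a basis, $h(X)=X$, which concludes the proof via the label argument above.

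The main obstacle is Step 1, namely excluding small trees and cycles in the support when $r_a\leq 4$ and the girth is only $6$. There the generic bounds of \cref{leaves,specialtree} are tight, so I expect case-by-case idempotent computations to be needed.
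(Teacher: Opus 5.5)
Your outline follows the paper's proof. The paper also uses \cref{components}, \cref{tree}, \cref{specialtree}, \cref{treeindep} and \cref{leaves} to show that the image of a line-vertex is a line-vertex. It then recovers a point $p$ from two lines $e_0,k_0$ through it via $p^h\in\mathrm{Im}(L_{e_0^h})\cap\mathrm{Im}(L_{k_0^h})$. Your relation $bx=\alpha(b+x)$ is the same observation, namely $b\in\mathrm{Im}(R_x)$.

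What needs correcting is the step you call the main obstacle. It is misdiagnosed, and the remedy you propose would fail. Since $uv\in\langle u,v\rangle$ for $u,v\in X$, we have $\mathrm{supp}(a^2)\subseteq\mathrm{supp}(a)$. So $a^2=a$ imposes nothing on the coefficients of vertices outside the support. In fact, idempotents supported on a star centred at a point do occur for suitable labels, and only the rank excludes them.

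The rank bounds you already quote suffice.
\begin{enumerate}
\item By \cref{treeindep}, $d\le r_a$. So $d=g-2$ is only possible as $d=4$, $g=6$ in the partial linear space case, and there \cref{leaves}(ii) gives $r_a\ge 4+3+3-4=6$.
\item Otherwise \cref{leaves}(i) applies. Every leaf has $\delta_l\ge2$ (resp.\ $\ge3$), so it yields $r_a\ge n+1$ (resp.\ $r_a\ge n+\ell+1$).
\item With $r_a=3$ this leaves only $n\le 2$, and a point--line edge gives $r_a\ge 1+1+2=4$. With $r_a=4$ it forces $n=1$ at once.
\item Your star with $\ell\ge 2$ line-leaves has $r_a\ge 2+\ell(|e|-1)$, which is at least $4$ (resp.\ $6$).
\end{enumerate}
The cycle case needs no new argument either. \cref{tree} covers $r_a\le g-3$. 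In the only borderline case $r_a=4=g-2$, \cref{specialtree} applies as stated, since $g>4$ and $\underline{\Gamma}$ is not a cycle.

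In Step 2, state explicitly that $x=e^h$ and $x'=(e')^h$ are distinct \emph{line}-vertices. This holds because the vertices of degree $|e|$ are exactly the lines, as points have degree at least $3$ (resp.\ $4$). Hence $x$ and $x'$ are nonadjacent in the bipartite graph $\Gamma$. Only then does $(\{x\}\cup N(x))\cap(\{x'\}\cup N(x'))$ reduce to $N(x)\cap N(x')$, which has at most one element because the girth is at least $6$. With these adjustments your argument is complete and coincides with the paper's.
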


\begin{proof}
As before, $\mathrm{Aut}(\Gamma)$ embeds injectively into $\mathrm{Aut}(A)$. So, we will
show that $\mathrm{Aut}(A)$ embeds injectively into $\mathrm{Aut}(\Gamma)$.
Notice that the girth $g$ of $\Gamma$ is at least $6$.

Let $X=Y\cup F$ be the vertex set of $\Gamma$ and fix an element $f\in F$ and $h\in \mathrm{Aut}(A)$.
Set $a=f^h$.

First assume that $\Delta$ is a graph.
As $f$ has two  neighbors in $\Gamma$, the rank of $L_f$ and $R_f$ equals   $3$.
Since none of the labels equals $1$, the element $f$ is a primitive idempotent. 
So, also $a$ and $L_a$ and $R_a$ have  these properties.
By \cref{components} we find $\mathrm{supp}(a)$ to be connected.
As $r_a=3\leq g-2$, we find by \cref{tree,specialtree} that $ \mathrm{supp}(a)$ is a tree.
Moreover, by \cref{treeindep}, this tree is of diameter $d\leq 3$.

If $d\geq 2$, then \cref{leaves} implies  that $r_a>3$,   leading to a contradiction.
If $d=1$, then $\mathrm{supp}(a)$ is an edge with two vertices, and  there are  at least $3$ neighbors of these vertices outside $\mathrm{supp}(a)$
that are adjacent to only one vertex in $\mathrm{supp}(a)$. But then 
we find that the image of $L_a$ has dimension at least $4$, again leading to a contradiction.
Hence $\mathrm{supp}(a)$ has size $1$ and $a\in X$. As $r_a=3$, we even find $a\in F$.

Now consider the case that $\Delta$ is a partial linear space with three points per line and at least four lines per point.
Again as all edges have labels different from $0$ and $1$, we find $f$ and hence $a$ to be a primitive idempotent. 
But then \cref{components} implies that $\mathrm{supp}(a)$ is connected. 
Since $r_a\leq 6\leq g-2$, we also find, using \cref{treeindep,specialtree}, that the induced graph on $\mathrm{supp}(a)$ is a tree of diameter at most $4$. 
If $d=g-2$, then $d=4$ and \cref{leaves}(ii) implies   that $r_a$ is at least $d+3+3-4=d+2=6$, contradicting $r_a=4$.
So $d\leq 3\leq g-3$, and \cref{leaves}(i) implies that, when $\mathrm{supp}(a)$ has $n$ vertices and $\ell$ leaves, we have 
$4=r_a\geq n-\ell+1+2\ell=n+\ell+1$.
But, in a tree of diameter $d$ we have $n+\ell+1\geq (d+1)+2+1=d+4$, implying $d=0$.
Hence $\mathrm{supp}(a)$ has size $1$ and $a\in X$. As $r_a=4$, we even find $a\in F$.

This implies that, in case $\Delta$ is a graph and in case it is a partial linear space with three points per line as well, that $\mathrm{Aut}(A)$ fixes the set $F$.

Now consider an element $y\in Y$ and set $b=y^h$.
Fix two edges, (or lines)  $e_0$ and $k_0$ on $y$ and set $e=e_0^h$ and $k=k_0^h$.
Then $y$ is contained in the intersection of the images of $L_{e_0}$ and $L_{k_0}$.
So, $b$ is contained in the intersection of the images of $L_{e}$ and $L_k$.
As $e\neq k$ are in $F$,  we find elements $(x_e),y_e,z_e\in Y$ with $e=\{(x_e),y_e,z_e\}$ and 
$( x_k), y_k,z_k\in Y$ with $k=\{(x_k),y_k,z_k\}$. (Here $x_e$ and $x_k$ are only present in case $\Delta$ is a partial linear space.)
Moreover, 
the images  of $L_{e}$ and $L_k$
are $\langle e,(x_e),y_e,z_e\rangle$ and $\langle k,(x_e), y_k,z_k\rangle$, respectively. 
As the elements of $X=Y\cup F$ are linearly independent, we find the intersection of these subspaces to be
$\langle x\rangle$ where $x$ is one of the vertices (or points) on $e$ or $k$ and $e$ and $k$ intersect in this unique vertex $x$.
As $b\in\langle x\rangle$ is an idempotent, we even find $b=x\in X$.

This all implies that $\mathrm{Aut}(A)$ not only fixes $F$ but also $Y$ and hence $X$ (as a set).
Moreover, we find, as $X$ is a basis for $A$,  that for each $h\in \mathrm{Aut}(A)$ we have $h(X)=X$.
Finally, as two vertices $x\neq y$ in $X$ are on an edge with label $\alpha$ if and only if their product is nonzero and equal to $\alpha(x+y)$, 
$\mathrm{Aut}(A)$ embeds injectively into $\mathrm{Aut}(\Gamma)$, the automorphism group of the labeled graph $\Gamma$.
We find $\mathrm{Aut}(A)$  and $\mathrm{Aut}(\Gamma)$ to be isomorphic.
\end{proof}

If the field $\mathbb{F}$ has size at least $3$, then of course we can label all edges of a graph with elements different from $0$ and $1$.
However, if $\mathbb{F}$ is the field with two elements, this is not possible.
In this case all edges need to be labeled with $1$.
We can still prove a result similar to the above.

\begin{theorem}\label{autgroupthm3}
Suppose $\Delta=(Y,F)$ is a connected graph with each vertex of degree at least $3$.
Let $\Gamma$ be its directed incidence graph.
Suppose all edges of  $\Gamma$ are labeled by $1\in \mathbb{F}_2$. 
Then the automorphism group of the $\mathbb{F}_2$ algebra $A=A_\Gamma$  is isomorphic to $\mathrm{Aut}(\Gamma)$ the automorphism group of the  graph $\Gamma$.
\end{theorem}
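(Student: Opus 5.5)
The plan follows the proof of \cref{autgroupthm2}, with one change. Over $\mathbb{F}_2$ the labels equal $1$, so the maps $L_x$ are no longer semi-simple and primitivity is not available. Worse, since $\Gamma$ is symmetric, for every $a=\sum_{y\in S}y$ (coefficients are $0,1$) we get
$$a^2=\sum_{y\in S}y+\sum_{x\neq y\in S,\ x\sim y}(x+y)=a,$$
because each adjacent pair in $S$ contributes $x+y$ twice. So every element of $A$ is an idempotent, and the only invariant I can use is the rank of $L_a$ (and $R_a$, which here equals $L_a$ since $A$ is commutative). As before, $\mathrm{Aut}(\Gamma)$ embeds into $\mathrm{Aut}(A)$, so it suffices to show that every $h\in\mathrm{Aut}(A)$ permutes $X$. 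For $f\in F$ the rank of $L_f$ is $3$. For $y\in Y$ it is $1+\delta_y\geq 4$. The girth is $g\geq 6$ by \cref{iso_of_groups}.

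\textbf{Step 1: an element whose adjoint has rank at most $3$ is a single vertex.} Let $a=f^h$, so $L_a$ has rank $3\leq g-3$. \cref{tree} still applies, since \cref{treeindep} and \cref{tree} use only the combinatorics of supports and not the labels. Hence $S=\mathrm{supp}(a)$ is a forest with trees $T_i$ and $\sum_i(|T_i|-|L_i|+1)\leq 3$. Each nontrivial tree contributes at least $2$ (a single edge $\{x,e\}$ already gives $L_a(x)$ and $L_a(e)$ independent). I then refine the counting of \cref{leaves}. A leaf $l$ of $S$ has $\delta_l-1$ neighbours $z$ outside $S$. If such a $z$ is adjacent to no other vertex of $S$, then $az=l+z$ adds a new direction. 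When $S$ has diameter $\leq g-3$ inside one tree, these directions are independent of $\langle S\rangle$, which gives the bound of \cref{leaves}(i). For a single edge $\{x,e\}$ with $x\in Y$ this already yields rank at least $2+(\delta_x-1)+1\geq 5$. Larger trees are ruled out exactly as in the proof of \cref{autgroupthm2}.

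\textbf{Step 2: the main obstacle is disconnected supports.} \cref{components} is useless here, because it relied on primitivity. A vertex $z$ outside $S$ adjacent to two vertices $s_1,s_2$ of different components gives $az=s_1+s_2$ (the two copies of $z$ cancel), so the outside directions are no longer automatically independent. My approach is to count per component. Each component $T_i$ yields a subspace $L_a(\langle T_i\rangle)\subseteq\langle T_i\rangle$ of dimension at least $1$; for an isolated vertex $s$ this is $\langle s\rangle$, because $L_a(s)=s+\sum_{t\sim s}(\dots)$ and $t\notin S$. These subspaces lie in disjoint coordinate blocks, so with $c$ components we already have $c$ independent vectors inside $\langle S\rangle$. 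The outside neighbours then contribute vectors of the form $s+z$ with $s\in S$, $z\notin S$. Counting distinct $z$ that have exactly one neighbour in $S$, I expect the rank to be at least $c+\#\{\text{such }z\}$. Girth $\geq 6$ means two vertices of $S$ share at most one neighbour, and every vertex has degree $\geq 2$ (lines) or $\geq 3$ (points). From this I will argue that two or more components force at least two private outside neighbours, so the rank is at least $4$. The delicate case is $c=2$ with a common neighbour, and I would check it by hand. The conclusion is $|S|=1$, so $a\in X$, and the rank being $3$ forces $a\in F$.

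\textbf{Step 3: the points.} This is verbatim from \cref{autgroupthm2}. For $y\in Y$, choose two edges $e_0,k_0$ through $y$. Then $y^h$ lies in $\mathrm{Im}(L_{e_0^h})\cap\mathrm{Im}(L_{k_0^h})$, and since $e_0^h,k_0^h\in F$ this intersection is $\langle x\rangle$ for the common vertex $x$. Because $y^h\neq 0$ and we are over $\mathbb{F}_2$, $y^h=x$. Thus $h(X)=X$. Adjacency is recovered as $xy\neq 0$, so $h$ induces an automorphism of $\Gamma$, and $\mathrm{Aut}(A)\cong\mathrm{Aut}(\Gamma)$.
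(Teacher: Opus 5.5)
Your Steps 1 and 3 are sound, and your observation that every element of $A$ is an idempotent is correct. Step 2, however, has a genuine gap. The statement you intend to prove there is false: the rank of $L_a$ alone cannot force $\mathrm{supp}(a)$ to be connected.

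Let $\Delta$ contain a triangle, e.g.\ $\Delta=K_4$ on $p,q,r,s$ (connected, all degrees $3$). Put $e_1=\{p,q\}$, $e_2=\{q,r\}$, $e_3=\{p,r\}$ and $a=e_1+e_2+e_3$. Then $ae_i=e_i$, $ap=e_1+e_3$, $aq=e_1+e_2$, $ar=e_2+e_3$, and $a$ annihilates $s$ and the remaining three edges. So $L_a$ is a projection of rank $3$, the same rank as $L_f$. Yet $\mathrm{supp}(a)$ has three components, and no vertex outside it has exactly one neighbour in it. Your expectation that two or more components force at least two private outside neighbours therefore fails, and no refinement of the rank count can exclude this $a$.

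What you are missing is an invariant finer than rank, which you discarded too quickly. Since $L_a=hL_fh^{-1}$, the map $L_a$ is conjugate to $L_f$. For $f=\{u,v\}$, $L_f$ is not semisimple ($L_f(u)=f+u$), and its $1$-eigenspace $\langle f,u+v\rangle$ has dimension $2$. In the example above, $L_a$ has a $3$-dimensional $1$-eigenspace, so that $a$ is not of the form $f^h$.

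Moreover, the second part of \cref{components} does not use primitivity. Take any idempotent $a=\sum_i a_i$, split along the components of its support. The cross products $a_ia_j$ vanish, and comparing supports in $a^2=a$ gives $aa_i=a_i^2=a_i$. Hence $\mathrm{supp}(a)$ has at most two components, which is exactly how the paper begins.

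Only after that does rank counting suffice, in three cases:
\begin{itemize}
\item Two components of positive diameter contribute at least $2+2$ to the rank.
\item An isolated vertex $a_2$ together with a tree $\mathrm{supp}(a_1)$ of positive diameter already uses up the rank inside $\langle\mathrm{supp}(a)\rangle$. So every outside neighbour of a leaf of $\mathrm{supp}(a_1)$ must be adjacent to $a_2$. By girth this puts the leaves and $a_2$ in $F$ and makes $\mathrm{supp}(a_1)$ a path $l_1\sim u\sim l_2$ with $u\in Y$. A third neighbour $w$ of $u$ then gives $aw=u+w\notin\langle\mathrm{supp}(a)\rangle$.
\item Two isolated vertices have private neighbours $z_1,z_2$, which put $a_1,a_2,a_1+z_1,a_2+z_2$ in the image.
\end{itemize}
With connectivity established this way, your Steps 1 and 3 complete the proof as in the paper.
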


\begin{proof}
We follow the proof of the above theorem.
Let $X=Y\cup F$ be the vertex set of $\Gamma$ and fix an element $f\in F$ and $h\in \mathrm{Aut}(A)$.
Set $a=f^h$.
The rank of  $L_f$ and $R_f$ is three. So, also $L_a$ and $R_a$ have rank $3$.
We will show $a\in F$.
The difference with the proof of \cref{autgroupthm2} is that $f$ and then also $a$ has eigenvalue $1$ with (algebraic) multiplicity $3$, but only a $2$-dimensional eigenspace. 

By \cref{components} we find that the support of $a$ has at most three connected components.
If it has three components, then $a$ would be semi-simple. So, there are at most two components.
Suppose $\mathrm{supp}(a)$ does have two components. By \cref{tree} these components  are trees  of diameter at most $2$.
Then write $a=a_1+a_2$ with $\mathrm{supp}(a_1)$ and $\mathrm{supp}(a_2)$ these two components which we assume
to be of diameter $d_1$ and $d_2$, respectively.

If $d_i>1$, then, by \cref{treebound}, the image of the subspace spanned by $\mathrm{supp}(a_i)$ under $L_a$ has dimension at least $d_i\geq 2$
and is contained in this subspace.
If $d_i=1$, this is also true, as then $a_i=y+z$ for some adjacent $y,z\in X$, and we find  $ay=z$ and  $az=y$ in the image.
So, if  $d_1,d_2\geq 1$, we find $r_a>3$, which is against our assumption.
Hence, we can assume $a_2$ to be in $X$.
If $d_1>0$,  then by the above we find that the  image of $L_a$ is contained in the span of $a_2$ and $\mathrm{supp}(a_1)$ and $d_1\leq 2$.

Each of the leaves of $\mathrm{supp}(a_1)$ has at least one neighbor $y$ outside $\mathrm{supp}(a_1)$.
If this neighbor is not adjacent to $a_2$, then we find $a_1y$ to be a nonzero vector in the image of $L_a$ outside 
the space spanned by $\mathrm{supp}(a)$.
This can not happen. Hence these neighbors are adjacent to $a_2$. As the girth is at least $6$, each leaf of  $\mathrm{supp}(a_1)$ can have only one neighbor outside
$\mathrm{supp}(a)$.
So leaves of $\mathrm{supp}(a_1)$ and then also $a_2$ (which is at distance $2$ from such leaf) have just two neighbors.
Thus, these leaves and $a_2$ are elements of $F$ and therefore  are not adjacent in $\Gamma$.
This implies that  $\mathrm{supp}(a_1)$ is a path with $3$ vertices.

Let $u$ be the unique vertex of $\mathrm{supp}(a_1)$ which is not a leaf.
Then by the above it is at distance $3$ from $a_2$. So, it is in $Y$ and  has at least three neighbors.
But then  one of them, say $w$, is outside $\mathrm{supp}(a_1)$, and $aw$ is in the image of $L_a$ but outside the span of $\mathrm{supp}(a_1)$, leading to a contradiction. 
Hence, $d_1=0$ and we also find $a_1\in X$.

But then both $a_1$ and $a_2$ are in $X$ and  have at most one common neighbor. So we can find different neighbors $z_1$ and $z_2$ of  $a_1$ and $a_2$, respectively,
with $z_1$ not adjacent to $a_2$ and $z_2$ not to $a_1$.
But then we find the elements $aa_1=a_1,aa_2=a_2$ as well as $az_1=a_1+z_1$ and $az_2=a_2+z_2$ in the image of  $L_a$, a contradiction with $r_a=3$.
We  conclude that $\mathrm{supp}(a)$ is connected.

The graph $\Gamma$ has girth at least $6$. So the rank $r_x$ of $L_x$ satisfies $r_x\leq g-3$ and we can conclude by \cref{tree}
that $\mathrm{supp}(a)$ is a tree. Now we can finish the proof just as in the proof of \cref{autgroupthm2}.
Indeed, as above, we can conclude $a\in X$, and, more precisely, $a\in F$.
Hence $\mathrm{Aut}(A)$ fixes the set $F$, and  then also $X$, proving $\mathrm{Aut}(A)$  and $\mathrm{Aut}(\Gamma)$ to be isomorphic.
\end{proof}

The above results enables us to construct (axial) algebras with automorphism group isomorphic to the automorphism group of various graphs or  geometries
related to (spherical) buildings and even sporadic simple groups.

We start with some examples related to incidence graphs of graphs.
(We refer the reader to \cite{drg,BC} for more information on the graphs under investigation.)

\begin{example}
In the above results we do not require the graphs (or partial linear spaces) $\Delta$ to be finite.
Suppose $\Pi=(P,L)$ is a nondegenerate polar space with at least three points per line. 
Then $\Pi$ can be recovered from its collinearity graph $\Delta$. (Indeed, a line on two points $x,y\in P$ equals $\{x,y\}^{\perp\perp}$,
where $\perp$ denotes the relation on $P$ of being equal or collinear.)
Now let $\Gamma$ be the directed incidence graph of $\Delta$, and assign to each edge of $\Gamma$ a fixed label $\alpha\in \mathbb{F}$ different from $0$ and $1$. 
Then $A_\Gamma$ is a simple commutative axial algebra with $$\mathrm{Aut}(A_\Gamma)\simeq\mathrm{Aut}(\Gamma)\simeq\mathrm{Aut}(\Delta)\simeq \mathrm{Aut}(\Pi).$$
If the field $\mathbb{F}$ contains at least $4$ elements we can fix distinct $\alpha$ and $\beta$ different from $0$ and $1$
and label all edges $(p,\ell)$ of $\Gamma$, where $p$ is a point and $\ell$ a line containing $p$ with $\alpha$, and all edges $(\ell,p)$ with $\beta$.
Then $A_\Gamma$ is a simple non-commutative axial algebra with $\mathrm{Aut}(A_\Gamma)$ isomorphic to $\mathrm{Aut}(\Pi)$. 
\end{example}

\begin{example}
Various of the sporadic simple groups are constructed as automorphism groups of graphs.
Using the directed  incidence graphs of these graphs and labeling them with a constant $\alpha$ different from $0,1$ will provide us with
simple commutative  axial algebras admitting the simple group as (sub)group of automorphisms.

If we label edges of the form $(v,e)$ where $v$ is a vertex and $e$ an edge with $\alpha$ and edges of the form $(e,v)$ with a $\beta$ different $0,1$ and $\alpha$,
then we find simple non-commutative  axial algebras admitting the simple group as (sub)group of automorphisms. For example:

The Higman-Sims group $\mathrm{HS}$ is constructed as a subgroup of the automorphism group of the Higman-Sims graph, a graph 
with $100$ vertices, and valency $22$. This graph has $1100$ edges and the labeled directed incidence graph is then giving rise to
$1200$-dimensional algebras whose automorphism group is isomorphic to $\mathrm{HS}:2$.

The Hall-Janko group $\mathrm{HJ}$ is also an automorphism group of a graph with $100$ vertices, but now of valency $36$ and hence with $1800$ edges.
Labeling the incidence graph gives rise to algebras of dimension $1900$ with automorphism group  $\mathrm{HJ}:2$.

The McLaughlin graph on $275$ vertices with valency $112$  and $15400$ edges admits the automorphism group of the 
sporadic McLaughlin group $\mathrm{McL}$ as group of automorphisms, yielding $15675$-dimensional algebras with automorphism group
$\mathrm{McL}:2$.
\end{example}

Next we provide some examples related to partial linear spaces with three points per line.
For a description of these geometries we also refer to \cite{drg} and \cite{BC}.
\begin{example}
Consider $\Delta$ to be a root group geometry defined over the field $\mathbb{F}_2$ (i.e. with three points per line).
Then with $\Gamma$ being the incidence graph of the geometry labeled with a fixed $\alpha\in \mathbb{F}$ different from $0,1$ (so assuming $\mathbb{F}$ is of size at least three), we find algebras $A_\Gamma$ with automorphism group 
being the automorphism group of $\Delta$, which is the automorphism group of a group of Lie type over $\mathbb{F}_2$, provided each point is on at least four lines.
\end{example}

\begin{example}
If we take for $\Delta$ the regular near hexagon on $729$ points obtained from the extended ternary Golay code with automorphism group 
$G=2^6.2.\mathrm{M}_{12}$, then $\Delta$ has three points per line and $12$ lines per point.
So the incidence graph $\Gamma$ is a graph with $5\cdot 729$ vertices.
Hence labeling its edges with an $\alpha\in \mathbb{F}$ different from $0$ and $1$ gives rise to an algebra of dimension
dimension $3645$ and automorphism group $G$.


The near hexagon on $759$ points related to the Mathieu group $\mathrm{M}_{24}$ has $5\cdot 759$ lines of size $3$.
Labeling the edges of the incidence graph provides an algebra of $A$ of dimension $6\cdot 759=4554$ with automorphism group  $\mathrm{M}_{24}$.


Let $\Delta$ be the near-octagon related to the Hall-Janko group $\mathrm{HJ}$. This near-octagon has $315$ points, $3$ points per line and $5$ lines per point. The number of lines is $525$.
Then its incidence graph $\Gamma$ is then a graph with $315+525=840$ vertices.
Labeling $\Gamma$ with a fixed $\alpha\neq 0$ provides us with an $840$-dimensional algebra $A_\Gamma$ with automorphism group $\mathrm{HJ}:2$.

The algebras thus obtained are all commutative axial algebras.
Labeling the edges $(p,\ell)$ where $p$ is a point and $\ell$ a line with $\alpha$ and the reversed edges $(\ell,p)$ with a $\beta$ different from $0,1,\alpha$ yields
simple non-commutative axial algebras with the corresponding almost simple groups as automorphism group.
\end{example}

\begin{example}
Let $\Delta$ be a Fischer space related to one of the three sporadic Fischer groups. 
Then with $\Gamma$ its incidence graph labeled with a nonzero $\alpha$ (or distinct $\alpha$ and $\beta$) different from $0,1$, we obtain simple
(non-) commutative axial algebras $A_\Gamma$ with automorphism group 
the automorphism group of these sporadic Fischer groups. 

We can generalize this as follows. Let $D$ be a conjugacy class of involutions in a simple group $G$ and suppose there are at least two elements
$d,e\in D$ with the order of $de$ equal to $3$.
Then let $L$ be the set of triples $\{d,e,f\}$ in $D$ with $de$ of order $3$ and $f=d^e=e^d$.
Then $\Delta=(D,L)$ is a partial linear space with three points per line.
As $G$ is assumed to be simple, $\Delta$ will be connected.
Taking $\Gamma$ to be the incidence graph of $\Delta$ in which all edges are labeled by a nonzero element provides an algebra
$A_\Gamma$ with $G$ contained in the automorphism group.
The full automorphism group of $\Delta$ and $\Gamma$ and hence also $A_\Gamma$  then contains $\mathrm{Aut}(G)$.

In particular, we can take for $G$ the Monster or the Baby Monster and $D$ the class of $2B$ involutions to obtain
algebras with these groups contained in the automorphism group.
The full automorphism group of $A_\Gamma$ is isomorphic to $\Aut(\Delta)$, which certainly contains $\Aut(G)$, but may be bigger.
\end{example}

\begin{remark}
We finish this section with the following observation.
Let $\mathcal{F}$ be a set containing $0$ and $1$.
Suppose $\Gamma$ and $\Gamma'$ are graphs labeled with elements from a set $\mathcal{F}$, different from $0$ and $1$.
If both $\Gamma$ and $\Gamma'$ satisfy the conditions of one of the above theorems, but are non-isomorphic as labeled graphs,
then the algebras $A_\Gamma$ and $A_{\Gamma'}$ are non-isomorphic.
Indeed, any isomorphism between the two algebras would induce an isomorphism between the two graphs as follows by  the above \cref{autgroupthm2,autgroupthm3}.
\end{remark}

\section{Every group is automorphism group of a  simple (axial) algebra}
\label{section:frucht}

In \cite{popov} Popov raised the question  whether every finite group is isomorphic to  the full automorphism group of a finite dimensional simple algebra. He, together with Gordeev, provided a positive answer to this question. See \cite{gordeev}.
Actually they prove for any field $\mathbb{F}$ containing sufficiently many elements and its algebraic closure $\mathbb{K}$, that 
for every
linear algebraic $\mathbb{F}$-group $G$ there exists a finite dimensional simple $\mathbb{F}$-algebra $A$ such that the algebraic group $\mathrm{Aut}(A\otimes_\mathbb{F} \mathbb{K})$ is defined over $\mathbb{F}$ and $\mathbb{F}$-isomorphic to $G$.
As a corollary they find that for every finite group $G$ there is a finite dimensional algebra $A$ over $\mathbb{F}$ with $\mathrm{Aut}(A)$ isomorphic to $G$.

Costoya, Mu\~{n}oz, Tocino, and Viruel \cite{evolution} construct for each finite group $G$ a simple evolution algebra over a field $\mathbb{F}$
of size at least $2\cdot|G|$ with automorphism group isomorphic to $G$, and thus also provide an affirmative answer to
Popov's question. 

Now \cref{autgroupthm2} and \cref{autgroupthm3}, 
together with  classical results of   Frucht \cite{frucht39,frucht49}, de Groot \cite{groot} and Sabidussi \cite{sabidussi}, 
provide for each group $G$ and  field $\mathbb{F}$ a construction of an infinite series of  algebras over $\mathbb{F}$ with automorphism group isomorphic to $G$.
For a given set $\mathcal{F}\subset \mathbb{F}$ containing $0,1$ and at least one (or two) other element, we can even make sure that these  algebras are (non-commutative) axial algebras with a set of axes satisfying  the graph fusion laws $\mathcal{G}(\mathcal{F})$.

For finite groups these graphs can be constructed such that each vertex has degree at least $3$, see \cite{sabidussi},
and we can use \cref{autgroupthm2} to find algebras with $G$ as its automorphism group.

For infinite groups, however, to be able to apply \cref{autgroupthm2} we still have to make sure that in the resulting graphs of 
(our modified versions of) the constructions of Frucht \cite{frucht39,frucht49}, de Groot \cite{groot} and Sabidussi \cite{sabidussi, sabidussi2} all vertices have degree at least $3$, so that we can apply \cref{autgroupthm2} to find algebras with $G$ as its automorphism group.

The constructions of Frucht, de Groot and Sabidussi all use the following approach. 
They first consider a set $S$ of generators for $G$ and the Cayley graph of $G$ with respect to the generating set $S$.
All edges with the same generator as label are then replaced by a graph with trivial automorphism group.
So, if $G$ is an infinite group and   we start with a set $S$ of at least four generators, 
and replacing edges by graphs with trivial automorphism group in which each vertex has degree at least three, then we end up
with a graph in which all vertices have degree at least $3$. Edges with different labels should be replaced by non-isomorphic graphs. 
So, it only needs to be checked that there are enough of such graphs. 
So $|S|$ non-isomorphic graphs with trivial automorphism group in which each vertex
has valency at least three will suffice. (Here $|S|$ denotes a cardinal number.)

We use a modified version of de Groot's approach \cite{groot} for constructing such graphs.
For each $s\in S$ let $\Theta_s$ be a graph containing as a subgraph the Frucht graph $\Phi$, which is a graph with $12$ vertices in which each vertex has degree $3$
and which has only the identity map as automorphism.
Furthermore for fixed adjacent vertices $q$ and $r$ of $\Phi$ inside $\Theta_s$ attach to $r$ a tree $T_s$ rooted at $r$ of countable depth in which the degrees of the vertices are cardinals bigger than $|S|$, 
such that no two vertices have the same degree. See \cref{graphfigure}. Clearly we can construct at least $|S|$ such graphs, with $\Theta_s$ isomorphic to $\Theta_{s'}$ if and only if $s=s'$ (by varying the valency of $r$). 
For each $s$ in $S$
replace all edges $(a,b=as)$ with label $s$ in the Cayley graph of $G$ with generating set $S$ by the graph $\Theta_s$, and additional edges $\{a,q\}$ and $\{r,b\}$.

\begin{figure}[h]
\begin{tikzpicture}
 
 \draw[color=black,dashed]
(0,0) ellipse [x radius=1.1, y radius=1.2]
(1.15,1.75) ellipse [x radius=0.5, y radius=2.3];
\filldraw
(-2,0) circle [radius=2pt]
(-1,0) circle [radius=2pt]
(1,0) circle [radius=2pt]
(2,0) circle [radius=2pt]
(-0.5,0.5) circle [radius=2pt]
(-0.5,-0.5) circle [radius=2pt]
(.5,0.5) circle [radius=2pt]
(.5,-0.5) circle [radius=2pt]
(1,1) circle [radius=2pt]
(1.2,1) circle [radius=2pt]
(1.4,1) circle [radius=2pt]
(0.8,1) circle [radius=2pt]
(-4,3) circle [radius=2pt]
(-2,3) circle [radius=2pt]

;

\draw (-1,0)--(-0.3,0.7);
\draw (-0.5,0.5)--(-0.3,0.3);
\draw (-2,0)--(2,0);
\draw (-1,0)--(-0.3,-0.7);
\draw (-0.5,-0.5)--(-0.3,-0.3);

\draw (1,0)--(.3,0.7);
\draw (0.5,0.5)--(0.3,0.3);
\draw (1,0)--(.5,0);
\draw (1,0)--(.3,-0.7);
\draw (0.5,-0.5)--(0.3,-0.3);

\draw (1,0)--(1,1);
\draw (1,0)--(1.2,1);
\draw (1,0)--(1.4,1);
\draw (1,0)--(0.8,1);

\draw (-4,3)--(-2,3);
\draw[->] (-3,2)--(-2,1);

\draw (-4,2.7) node {$a$};
\draw (-2,2.7) node {$b$};
\draw (-3,3.3) node {$s$};

\draw (-2,-0.3) node {$a$};
\draw (-1,-0.3) node {$q$};
\draw (-0.2,1) node {$\Theta_s$};
\draw (1.2,1.5) node {$T_s$};

\draw (1,-0.3) node {$r$};
\draw (2.2,-0.3) node {$b$};

\end{tikzpicture}
\caption{Replace the edge $(a,b=as)$ by a subgraph.}\label{graphfigure}
\end{figure}
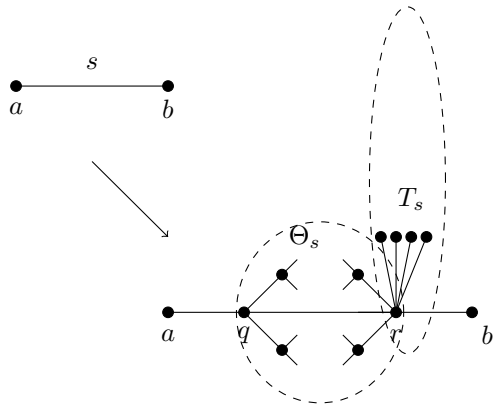

In the resulting graph, all vertices from the original Cayley graph have valency $|S|> 3$, all vertices of the subgraphs $\Phi$
different from $q$ and $r$ have valency $3$, $q$ has valency $4$ and vertices in the tree on $r$ have valency bigger than $|S|$. 
From this resulting graph we can recover the original Cayley graph.
Indeed, we can recover the vertices of $\Theta_s$ different from $p,q$ as vertices of degree $3$, and the vertices $q$ and  $r$
as being adjacent to these vertices and having valency $4$ and $>|S|$, respectively.
This means we can recover the directed edge $(a,b)$ as well as the labeling by the isomorphism type of $T_s$ and hence the original Cayley graph.
As in \cite{groot} we can conclude that the resulting graph, denoted by $\Delta$, is an undirected graph  with automorphism group isomorphic to $G$ in which each vertex has degree at least $3$.

We notice that we can  construct infinitely many non-isomorphic graphs $\Delta$, by using different graphs with trivial automorphism group
and constant valency $3$ instead of the Frucht graph $\Phi$ or by varying the valency of $r$.  
So, the above implies the following result.

\begin{theorem}
Let $G$ be a group. Then there exist infinitely many non-isomorphic ordinary graphs $\Delta$ with
each vertex of valency at least $3$ and $\mathrm{Aut}(\Delta)$ isomorphic to $G$.
\end{theorem}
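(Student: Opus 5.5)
The plan is to split according to whether $G$ is finite, trivial, or infinite, and in the nontrivial cases to rely on the de Groot-type construction described just above.

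\emph{Finite $G$.} Here I would quote Sabidussi \cite{sabidussi}, who gives finite graphs with automorphism group $G$ and all degrees at least $3$. To obtain infinitely many non-isomorphic such graphs, I would vary the rigid cubic gadget that replaces the Frucht graph $\Phi$, or the size of the rigid gadget hung at $r$. For instance, attach to $r$ distinct pendant rigid cubic graphs whose sizes grow, which keeps all degrees at least $3$.

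\emph{Infinite $G$.} First choose a generating set $S$ with $|S|\geq 4$, say $S=G\setminus\{1\}$. Since $G$ is infinite, this is infinite. The Cayley graph then has every vertex of (in plus out) degree $2|S|>3$.

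Next, for each $s\in S$ build $\Theta_s$ as in the text. It consists of the Frucht graph $\Phi$ (rigid and cubic) together with a rooted tree $T_s$ at $r$ in which all degrees are distinct cardinals exceeding $|S|$. The trees $T_s$ are chosen pairwise non-isomorphic, for example by giving $r$ its own distinct cardinal degree in each. Replacing every Cayley edge $(a,as)$ by $a - q, \Theta_s, r - as$ yields an undirected graph $\Delta$ in which every vertex has degree at least $3$. The only vertex that needs checking is $q$, which has degree $4$.

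The key step is showing $\mathrm{Aut}(\Delta)\cong G$. Degrees identify each part of $\Delta$:
\begin{enumerate}
\item vertices of degree exactly $3$ are the non-$q,r$ vertices of the copies of $\Phi$;
\item $q$ is the unique degree-$4$ vertex of each copy;
\item the tree vertices are recognised by their large distinct degrees;
\item the Cayley vertices have degree $2|S|$, which must be chosen distinct from all tree degrees.
\end{enumerate}
Any automorphism therefore permutes the gadgets. Rigidity of $\Phi$ together with the distinct degrees in $T_s$ forces it to map each gadget of type $s$ onto another gadget of type $s$, identically and preserving the direction $q\to r$. It thus induces a label- and direction-preserving automorphism of the Cayley graph. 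Such automorphisms are exactly the left multiplications by elements of $G$, and the map back to $\mathrm{Aut}(\Delta)$ is injective. To get infinitely many examples, vary the cardinal degree of $r$, or replace $\Phi$ by other rigid cubic graphs of varying size. Degree sequences or component structures then distinguish the resulting graphs.

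The main obstacle I expect is the bookkeeping of cardinal degrees. The tree degrees must avoid $2|S|$, $3$ and $4$, and must be pairwise distinct across all trees. When $|S|$ is infinite there are enough cardinals above $|S|$, but the depth-countable trees need a careful injective assignment of degrees so that no automorphism can move tree vertices.
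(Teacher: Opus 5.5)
Your proposal is correct and follows the paper's route. For finite $G$ you cite Sabidussi. For infinite $G$ you use the modified de Groot construction: each Cayley edge $(a,as)$ is replaced by the Frucht graph $\Phi$ with a rigid tree $T_s$ of pairwise distinct large degrees hung at $r$. The labelled Cayley digraph is then recovered from $\Delta$ via degrees, and infinitely many examples come from varying $\Phi$ or the degree of $r$.

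You flag the existence of the trees $T_s$ as the main obstacle; the paper also just asserts it. It can be settled by assigning fresh cardinals level by level; for infinite $|S|$ such a tree necessarily has size an aleph fixed point $\kappa=\aleph_\kappa$. For finitely generated $G$ it can be sidestepped by taking $S$ finite and using distinct finite degrees larger than $2|S|$.
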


Now for a group $G$ let $\Delta$ be a graph with all vertices of valency at least $3$ and $\mathrm{Aut}(\Delta)$ isomorphic to
$G$, and take $\Gamma=(X,E)$ to be the directed incidence graph of $\Delta$. Then $G$ is also the automorphism group of $\Gamma$.
If $\mathbb{F}$ is the field with two elements, we label all edges of $\Gamma$ with $1$.
If $\mathbb{F}$ has at least three (or four) elements and we
can label the edges with elements different from $0$ and $1$, such that two edges in a single $\mathrm{Aut}(\Gamma)$-orbit have the same value. 
The group $G$ is then also the automorphism group of the so obtained labeled graph, also denoted by $\Gamma$.
We notice that edges $(v,e)$ and $(e,v)$ of $\Gamma$
where $v$ is a vertex on an edge $e$ of $\Delta$ are in distinct orbits. So, we can label them by the same element of $\mathcal{F}$ to obtain
commutative algebras $A_{\Gamma}$, or by different elements from $\mathcal{F}$ (if possible) to obtain non-commutative algebras $A_{\Gamma}$.

Applying \cref{autgroupthm2} (or \cref{autgroupthm3} in case $\mathbb{F}$ has just two elements),
we obtain that $G$ is isomorphic to the automorphism group of the algebra $A_{\Gamma}$,
which is simple, as follows from the observation that  $\Gamma$ does not contain cliques of size $3$, see \cref{simplecor}.
Moreover, in case $\mathcal{F}$ has more than two elements,  $A_\Gamma$ is an axial algebras with a set of primitive axes $X$ 
satisfying the fusion law $\mathcal{G}(\mathcal{F})$, where $\mathcal{F}$ contains $\{0,1\}$ and  all used labels.

Hence, we have obtained the following result which provides for every group $G$ and  field $\mathbb{F}$ infinitely many non-isomorphic simple algebras with automorphism group 
isomorphic to $G$.

\begin{theorem}\label{fruchttheorem}
Let $G$ be a  group and $\mathbb{F}$ a field.

\begin{enumerate}
\item If $\mathbb{F}$ contains at least three elements, then
there exist infinitely many non-isomorphic  directed graphs $\Gamma=(X,E)$ with edges labeled by elements of $\mathbb{F}\setminus\{0,1\}$
such that $A_\Gamma$ is a  commutative simple axial  algebra over $\mathbb{F}$ generated by a set of primitive idempotents $X$
of graph type $\mathcal{G}(\mathcal{F})$ for  $\mathcal{F}$ being the set containing $0,1$ and all labels, and $G$ isomorphic to $\mathrm{Aut}(A_\Gamma)$.
\item
If $\mathcal{F}$ contains at least four elements, then there exist infinitely many non-isomorphic  directed graphs $\Gamma=(X,E)$ with edges labeled by elements of $\mathcal{F}\setminus \{0,1\}$
such that $A_\Gamma$ is a  non-commutative simple  axial algebra over $\mathbb{F}$ generated by a set of primitive idempotents $X$
of graph type $\mathcal{G}(\mathcal{F})$ for  $\mathcal{F}$ being the set containing $0,1$ and all labels, and $G$ isomorphic to $\mathrm{Aut}(A_\Gamma)$.
\item
If $\mathbb{F}$ has only two elements, then there exists infinitely many non-isomorphic  directed graphs $\Gamma$ with edges labeled with $1$ such that $A_\Gamma$ over $\mathbb{F}_2$ is a  simple 
algebra and has automorphism group isomorphic to $G$.
\end{enumerate}

Moreover, in case $G$ is a finite group, the graphs $\Gamma$ in the above statements can be
assumed to be finite, implying the algebras $A_\Gamma$ to be finite dimensional.
\end{theorem}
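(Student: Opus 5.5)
The plan is to assemble \cref{fruchttheorem} from three ingredients: the graph-theoretic theorem just established, the incidence-graph rigidity results \cref{autgroupthm2,autgroupthm3}, and the simplicity and axiality statements of \cref{simplecor} and \cref{fusionthm}.

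First, fix the group $G$. By the preceding theorem there are infinitely many pairwise non-isomorphic connected graphs $\Delta_i$ with every vertex of valency at least $3$ and $\mathrm{Aut}(\Delta_i)\cong G$. Connectedness holds because the construction is built on a Cayley graph of a generating set. If $G$ is finite, Sabidussi's construction \cite{sabidussi} gives finite such $\Delta_i$ instead.

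Let $\Gamma_i$ be the directed incidence graph of $\Delta_i$. By \cref{iso_of_groups} its underlying graph has girth at least $6$, and $\mathrm{Aut}(\Gamma_i)\cong\mathrm{Aut}(\Delta_i)\cong G$. The labeling is chosen as follows:
\begin{enumerate}
\item over a field with at least three elements, give every edge a fixed label $\alpha\notin\{0,1\}$, which yields a commutative algebra;
\item when at least two values $\alpha\neq\beta$ outside $\{0,1\}$ are available, give edges $(v,e)$ the label $\alpha$ and edges $(e,v)$ the label $\beta$, which yields a non-commutative algebra;
\item over $\mathbb{F}_2$, label every edge with $1$.
\end{enumerate}
These labelings are constant on the edge orbits of $\mathrm{Aut}(\Gamma_i)$. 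Since $\Gamma_i$ is bipartite, with vertices on one side and edges of $\Delta_i$ on the other, the edges $(v,e)$ and $(e,v)$ are never interchanged by a graph automorphism. Hence the labeled graph still has automorphism group $G$. Then \cref{autgroupthm2}, or \cref{autgroupthm3} in the $\mathbb{F}_2$ case, gives $\mathrm{Aut}(A_{\Gamma_i})\cong G$.

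Next come the structural properties.
\begin{enumerate}
\item Simplicity: $\Gamma_i$ is weakly connected, has at least three vertices, and contains no triangle because it is bipartite, so \cref{simplecor} applies.
\item Axiality: when no label equals $1$, \cref{axialprop} and \cref{fusionthm} show that $X$ is a set of primitive axes satisfying $\mathcal{G}(\mathcal{F})$, where $\mathcal{F}=\{0,1\}\cup\{\text{labels}\}$. Here $0\in\mathcal{F}$ is needed because $\Gamma_i$ is not complete.
\item Commutativity: in the first labeling $A_{\Gamma_i}$ is commutative, since $\Gamma_i$ is symmetric with $\alpha_{x,y}=\alpha_{y,x}$. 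In the second it is not, since $ve=\alpha(v+e)\neq\beta(v+e)=ev$.
\end{enumerate}

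It remains to show the algebras are pairwise non-isomorphic. By the Remark closing \cref{section:auto}, an algebra isomorphism $A_{\Gamma_i}\to A_{\Gamma_j}$ maps $X_i$ onto $X_j$, because the rank arguments in \cref{autgroupthm2,autgroupthm3} apply verbatim to isomorphisms and not only to automorphisms. It therefore induces a labeled-graph isomorphism $\Gamma_i\cong\Gamma_j$, and by \cref{iso_of_groups} this gives $\Delta_i\cong\Delta_j$, a contradiction for $i\neq j$.

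The main obstacle is the reuse of the automorphism proofs for isomorphisms, together with the bookkeeping on the field size. For the non-commutative case one needs two distinct labels outside $\{0,1\}$, which requires $|\mathbb{F}|\geq 4$; the theorem's wording in (ii) in terms of $\mathcal{F}$ should be read in that sense.
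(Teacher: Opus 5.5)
Your proposal is correct and follows essentially the paper's own argument. You take the incidence graphs of the de Groot/Sabidussi-type graphs $\Delta$ with labelings constant on $\mathrm{Aut}(\Gamma)$-orbits, then apply \cref{autgroupthm2} or \cref{autgroupthm3}, \cref{simplecor}, \cref{fusionthm}, and the closing Remark of \cref{section:auto} for non-isomorphism.

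One small point to tighten: bipartiteness alone does not prevent an automorphism of $\Gamma_i$ from swapping the two classes, and hence mapping some $(v,e)$ to some $(e',v')$. What prevents it is that vertices of $\Delta_i$ have degree at least $3$ in $\Gamma_i$ while edges of $\Delta_i$ have degree $2$, as in the proof of \cref{iso_of_groups}.
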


\cref{fruchttheorem} implies \cref{groupthm}.

For finite groups $G$ we can view the above theorem as the counterpart to the following result
due to  Gorshkov, M$^\mathrm{c}$Inroy, Shumba and Shpectorov.

\begin{theorem}\cite{gorshkov2023automorphism}
Let $A$ be a finite-dimensional commutative axial algebra over a field $\mathbb{F}$ of characteristic
not two with fusion law $\star$ on the set $\mathcal{F}\subseteq \mathbb{F}$. If $\frac{1}{2}\not\in \mathcal{F}$, then $\mathrm{Aut}(A)$ is finite.
\end{theorem}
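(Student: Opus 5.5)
The plan is to show that $\mathrm{Aut}(A)$ is the group of rational points of an affine algebraic group whose identity component is trivial. First I pass to the algebraic closure $\mathbb{K}$ of $\mathbb{F}$ and set $A_\mathbb{K}=A\otimes_\mathbb{F}\mathbb{K}$. Since $A$ is finite dimensional, $G=\mathrm{Aut}(A_\mathbb{K})$ is a Zariski-closed subgroup of $\mathrm{GL}(A_\mathbb{K})$, cut out by the polynomial equations $g(u)g(v)=g(uv)$. Moreover $\mathrm{Aut}(A)$ embeds into $G(\mathbb{K})$. So it suffices to show that $G$ is finite, and for that it suffices to show that its identity component $G^\circ$ is trivial. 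Each axis $x$ of $A$ remains an axis of $A_\mathbb{K}$ with the same spectrum contained in $\mathcal{F}$, because semisimplicity of $L_x$ with eigenvalues in $\mathbb{F}$ is preserved under extension of scalars. In particular $A_{1/2}(L_x)=0$ in $A_\mathbb{K}$, since $\frac{1}{2}\notin\mathcal{F}$.

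The key step is to show that every such axis $x$ is an isolated point of the affine variety $\mathcal{I}=\{u\in A_\mathbb{K}\mid u^2=u\}$ of idempotents. I compute the Zariski tangent space of the scheme $u^2=u$ at $x$. Substituting $x+\varepsilon v$ with $\varepsilon^2=0$ and using commutativity gives the linear condition $2xv=v$, that is $L_x(v)=\tfrac12 v$; this uses $\operatorname{char}\mathbb{F}\neq 2$. Hence the tangent space equals $A_{1/2}(L_x)=0$. Since the dimension of a variety at a point is bounded by the dimension of its Zariski tangent space there, $x$ is an isolated point of $\mathcal{I}$, and indeed a reduced one. I use this tangent-space argument rather than one with derivations, because it avoids any separability issues of orbit maps in positive characteristic.

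Next consider the orbit map $G^\circ\to A_\mathbb{K}$, $g\mapsto g(x)$. Its image is irreducible, hence connected, and it lies in $\mathcal{I}$ because automorphisms send idempotents to idempotents. The image also contains $x$, and $x$ is isolated in $\mathcal{I}$. Therefore the image is $\{x\}$, so $G^\circ$ fixes every axis. The axes generate $A$, hence generate $A_\mathbb{K}$ as a $\mathbb{K}$-algebra. Consequently $G^\circ$ acts trivially on $A_\mathbb{K}$, so $G^\circ=1$. It follows that $G=G/G^\circ$ is finite, and therefore so is $\mathrm{Aut}(A)$.

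I expect the main obstacle to be making the isolation step rigorous: passing from a zero Zariski tangent space to isolatedness of the point in $\mathcal{I}$. This is standard, since $\dim_x\mathcal{I}\le\dim T_x\mathcal{I}$. The other point needing care is that the axis hypotheses survive the base change to $\mathbb{K}$. Note that the fusion law itself is never used; only the absence of the eigenvalue $\tfrac12$ matters.
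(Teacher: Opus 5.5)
The paper gives no proof of this statement. It is quoted from Gorshkov, M$^\mathrm{c}$Inroy, Shumba and Shpectorov only as a counterpart to \cref{fruchttheorem}, so there is no internal argument to compare against. Your proof is correct and complete as it stands.

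The decisive step is the dual-number computation. In a commutative algebra over a field of characteristic $\neq 2$, the Zariski tangent space at an idempotent $x$ to the scheme $u^2=u$ is $A_{1/2}(L_x)$. So an axis whose spectrum avoids $\frac12$ is a reduced isolated point of the variety $\mathcal{I}$ of idempotents of $A_\mathbb{K}$. The orbit map then forces $G^\circ$ to fix every axis, and generation gives $G^\circ=1$. You are also right that only $\mathrm{spec}(L_x)\subseteq\mathcal{F}$ is used, not the fusion law.

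There are two simpler ways to finish.

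\emph{Avoiding algebraic groups.} $\mathcal{I}$ has finitely many irreducible components, and each isolated point is one of them. Hence the set $S$ of isolated idempotents of $A_\mathbb{K}$ is finite. Every automorphism of $A$, extended to $A_\mathbb{K}$, is a linear homeomorphism mapping $\mathcal{I}$ onto itself, so it permutes $S$. Since $S$ contains the generating axes, this action is faithful, and $|\mathrm{Aut}(A)|\le |S|!$.

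\emph{Using derivations.} Your caution here is unnecessary. The tangent space at the identity of the group scheme $\mathrm{Aut}(A_\mathbb{K})$ is $\mathrm{Der}(A_\mathbb{K})$, and $\dim G\le \dim \mathrm{Der}(A_\mathbb{K})$ holds in every characteristic. For each axis $x$, the identity $D(x)=D(x^2)=2xD(x)$ puts $D(x)$ in $A_{1/2}(L_x)=0$, so $\mathrm{Der}(A_\mathbb{K})=0$.

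Both variants rest on the same identity that drives your tangent-space computation.
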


\begin{remark}
We can not only use the directed incidence graph of graphs to construct an algebra for every finite group $G$ as in the above theorem, 
but also  the directed incidence graph of some finite partial linear spaces with three points per line.
Mendelsohn \cite{mendelsohn} has shown that for each finite group $G$ there does exist a finite Steiner triple system $\Delta$ with at least $4$ lines per point and $G$ being isomorphic to the automorphism group of $\Delta$.
By taking $\Gamma$ to be the directed  incidence graph of $\Delta$ and labeling its edges with an element different from  $0$ and $1$ in the field $\mathbb{F}$ (different from $\mathbb{F}_2$)  we obtain, by  \cref{autgroupthm2} or \cref{autgroupthm3}, a finite dimensional commutative and simple axial algebra $A_\Gamma$
with $G$ as automorphism group.
If the field contains at least $4$ elements, we can label the edges in such a way that we obtain a finite dimensional non-commutative and simple axial algebra
$A_\Gamma$ with automorphism group isomorphic to $G$.
\end{remark}

\bigskip

\bibliographystyle{plain}

\bibliography{frucht.bib}

\vspace{2cm}

\parindent=0pt
Hans Cuypers\\
Department of Mathematics and Computer Science \\
Eindhoven University of Technology\\
P.O. Box 513 5600 MB, Eindhoven\\
The Netherlands\\
email: f.g.m.t.cuypers@tue.nl

\end{document}